\documentclass[10pt, reqno]{amsart}

\usepackage{amsmath, amsthm, amscd, amsfonts, amssymb, graphicx, xcolor}
\usepackage[bookmarksnumbered, colorlinks, plainpages]{hyperref}
\usepackage[all]{xy}
\usepackage{cleveref}
\usepackage{tikz-cd}
\usetikzlibrary{arrows.meta}
\usepackage{mathrsfs}
\usepackage[left=0.8in, right=0.8in]{geometry}
\usepackage{setspace}
\newtheorem{theorem}{Theorem}[section]
\newtheorem{lem}[theorem]{Lemma}
\newtheorem*{thmA}{Theorem A}
\newtheorem*{thmB}{Theorem B}
\newtheorem{prop}[theorem]{Proposition}
\newtheorem{corollary}[theorem]{Corollary}
\theoremstyle{definition}
\newtheorem{definition}[theorem]{Definition}

\theoremstyle{remark}
\newtheorem{remark}[theorem]{Remark}
\numberwithin{equation}{section}
\crefrangeformat{equation}{(#1)--(#2)}

\begin{document}
\setcounter{page}{1}

\newcommand{\calB}{\mathcal{B}}
\newcommand{\calG}{\mathcal{G}}
 \newcommand{\calP}{\mathcal{P}}
\newcommand{\calM}{\mathcal{M}}
\newcommand{\calN}{\mathcal{N}}
\newcommand{\calW}{\mathcal{W}}
\newcommand{\calD}{\mathcal{D}}
\newcommand{\calU}{\mathcal{U}}
\newcommand{\calS}{\mathcal{S}}
\newcommand{\T}{\mathcal{T}}

\newcommand{\id}{\mathrm{Id}}
\newcommand{\Span}{\mathrm{Span}}
\newcommand{\Nontip}{\mathrm{Nontip}}
\newcommand{\Tip}{\mathrm{Tip}}
\newcommand{\CTip}{\mathrm{CTip}}
\newcommand{\rmBar}{\mathrm{Bar}}
\newcommand{\Hom}{\mathrm{Hom}}
\newcommand{\Ker}{\mathrm{Ker}}
\newcommand{\rmIm}{\mathrm{Im}}

\newcommand{\rmChar}{\mathrm{Char}}

\newcommand{\rmH}{\mathrm{H}}
\newcommand{\rmHH}{\mathrm{HH}}

%\color{darkgray}{
%\noindent 
%{\small Annals of Mathematics and Computer Science}\hfill     {\small ISSN: 2789-7206}\\
%{\small Vol 20 (2024) 1-3}\hfill  {\small https://doi.org/10.56947/amcs.v20.223}}

%\centerline{}

%\centerline{}

%------------------------------------------------------------------------------

%Title of the paper
\title[Hochschild cohomology ring of the Xu--Snashall   algebra]{Gerstenhaber algebra structure on the Hochschild cohomology ring of the Xu--Snashall   algebra}

%Author names and affiliations
\author{Qi Long,  Ziyang Shi and Guodong Zhou}

\address{ 
	  School of Mathematical Sciences
     Key Laboratory of MEA (Ministry of Education),
	   Shanghai Key Laboratory of PMMP,
	  East China Normal University
	  Shanghai 200241,
	   P. R. China} 
\email{51275500029@stu.ecnu.edu.cn }
\email{51265500052@stu.ecnu.edu.cn }
 \email{gdzhou@math.ecnu.edu.cn}

%\dedicatory{This paper is dedicated to Professor ABCD}

%\date{Received: xxxxxx; Revised: yyyyyy; Accepted: zzzzzz.
%\newline \indent \(^{*}\) Corresponding author
%\newline \indent © The Author(s) 2025. This article is licensed under a Creative Commons Attribution-
%\newline \indent NonCommercial-NoDerivatives 4.0
%International License. To view a copy of the licence, visit 
%\newline \indent \url{https://creativecommons.org/licenses/by-nc-nd/4.0/}}

%Abstract, keywords, math subject classification
\begin{abstract}  Let $A$ be a finite dimensional algebra and let $\rmHH^*(A)$ be its Hochschild cohomology ring, which is a Gerstenhaber algebra. 
Denote by $\calN$ (resp. $G$, $\calG$) the ideal (resp. weak Gerstenhaber ideal, Gerstenhaber ideal) generated by all homogeneous nilpotent elements. 

Motivated by their work on support varieties via Hochschild cohomology, 
Snashall and Solberg conjectured that $\rmHH^*(A)/\calN$ is a finitely generated algebra. Xu constructed a counterexample to the Snashall-Solberg conjecture over a base field of characteristic two,  and Snashall generalized this example  to arbitrary characteristic. 

Hermann further asked whether $\rmHH^*(A)/G$  is a finitely generated algebra and suggested  considering first the Xu--Snashall  algebra. 
In this paper, we answer this question for the Xu--Snashall  algebra. In fact, by explicitly computing the Gerstenhaber algebra structure on the Hochschild cohomology ring, we show that  $G=\calN$; hence $\rmHH^*(A)/G=\rmHH^*(A)/\calN$ is not a finitely generated algebra. Furthermore, we show that   $\rmHH^*(A)/\calG\cong K$. Therefore, one may  ask whether, for a finite dimensional algebra $A$, $\rmHH^*(A)/\calG$  is always a finitely generated algebra.

Our main tools are  two-sided Anick resolutions and weak self-homotopies. 
\newline
\newline
\noindent \textit{Keywords.} Finitely generated algebra, Gerstenhaber structure, Hochschild cohomology ring, nilpotent element, two-sided Anick resolution, weak self-homotopy, Xu--Snashall algebra  
\newline
\noindent \textit{2020 Mathematics Subject Classification.}  
16E40 %(1991–now)(Co)homology of rings and associative algebras (e.g., Hochschild, cyclic, dihedral, etc.)
%16G20%(1991–now) Representations of quivers and partially ordered sets
\end{abstract} \maketitle

\tableofcontents

%------------------------------------------------------------------------------

\section{Introduction}

It is well known from the work of Gerstenhaber \cite{Ger63} that the  Hochschild cohomology  of a ring has a rich structure: it has a cup product which makes it a graded commutative algebra and a Gerstenhaber  Lie bracket which makes it a (shifted) graded Lie algebra, and the cup product and the Lie bracket are compatible in a  certain sense. Such a structure is later called a Gerstenhaber algebra.

Let $A$ be a finite dimensional algebra and $\rmHH^*(A)$ be its Hochschild cohomology ring. 
Denote by $\calN$ the ideal generated by  all homogeneous nilpotent elements (with respect to the cup product), so  the quotient $\rmHH^*(A)/\calN$ is still a graded commutative algebra;
denote by  $G$ the weak Gerstenhaber ideal  generated by all  homogeneous nilpotent elements, that is, the minimal ideal (with respect to the cup product) containing all homogeneous nilpotent elements and closed under the Lie bracket, so  the quotient $\rmHH^*(A)/G$ is a graded commutative algebra, but not a Gerstenhaber algebra in general; 
denote by $\calG$ the Gerstenhaber ideal generated by all homogeneous nilpotent elements, so the quotient $\rmHH^*(A)/\calG$ is a Gerstenhaber algebra.

Motivated by their work on support varieties via Hochschild cohomology \cite{SS04, EHTSS04}, 
Snashall and Solberg \cite{SS04} conjectured that the Hochschild cohomology ring of a finite dimensional algebra modulo the ideal generated by all homogeneous nilpotent elements, say $\rmHH^*(A)/\calN$, is a finitely generated algebra. Xu \cite{Xu08} constructed a counterexample over a base field of characteristic two to the Snashall--Solberg conjecture, and later Snashall \cite{Sna09} generalized this example to arbitrary characteristic.
 
Let us recall the quiver with relations of the Xu--Snashall algebra. Let $Q$ be the quiver
\[
\begin{tikzcd}
    1 \arrow[loop above, out =120, in =60, looseness=4,"a"] \arrow[loop below,out =300, in =240, looseness=5,"b"] \arrow[r,"c"] & 2
\end{tikzcd}
\] 
and let $I$ be the ideal of the path algebra $KQ$ generated by the relations $\{a^2, b^2, ab-ba, ac\}$. The Xu--Snashall algebra is defined to be $A=KQ/I$.  
\begin{thmA} [\cite{Xu08, Sna09}] 
When $\rmChar(K)= 2$,
\[\rmHH^*(A)/\calN \cong  K \oplus K[a,b]b,
    \]
where $|a|=|b| = 1$; and when $\rmChar(K)\neq 2$, 
\[ \rmHH^*(A)/\calN \cong  K \oplus K[a^2,b^2]b^2,\]
where $|a|=|b| = 1$. 
\end{thmA}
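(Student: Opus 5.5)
Theorem A is due to Xu and Snashall, but I would reprove it by computing $\rmHH^*(A)$ explicitly, following the strategy of this paper. Write $B=K[a,b]/(a^2,b^2)$ for the local algebra at vertex $1$. Then $A$ is the one-point extension of $B$ by the module $M=Bc$. Since $ac=0$, this module is $M\cong B/(a)\cong K[b]/(b^2)$.

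The first step is to build a minimal (two-sided Anick) projective bimodule resolution $P_\bullet\to A$. I use the reduced Gröbner basis $\{a^2,b^2,ba-ab,ac\}$ with the order $b>a>c$. The tips are $a^2$, $b^2$, $ba$, $ac$. The Anick $n$-chains are then of two kinds. Those supported at vertex $1$ are the words $a^ib^j$ type chains $a^{n-j}$-, $b$-, mixed chains, one for each pair $(i,j)$ with $i+j=n$, coming from $B$. The others end in $c$: for $n\ge1$ these are the chains $a^{n}c$ of the form $a\cdots a\,c$, and the corresponding terms $A e_1\otimes e_2 A$. I write down the differential using the weak self-homotopy method. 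On the $B$-part it is the familiar Koszul-type differential, with signs $b\otimes a \pm a\otimes b$. On the chain $a^nc$ it is $a\otimes a^{n-1}c \pm (\text{term from } a^{n-1}c\text{ sent to } \ldots)$.

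The second step is to compute $\Hom_{A^e}(P_n,A)\cong \bigoplus_{\text{chains}} e_{s}Ae_{t}$ and its cohomology.

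1. For chains at vertex $1$, the cochains take values in $e_1Ae_1=B$, giving essentially $\rmHH^*(B)$ for the commutative ring $B$. It is known that $\rmHH^*(B)\cong B\otimes K[x,y]$ with $|x|=|y|=1$ in characteristic $2$. In characteristic $\neq2$ one instead gets degree-two polynomial generators together with degree-one exterior-type classes.
2. For chains $a^nc$, the cochains take values in $e_1Ae_2=Bc=\{c,bc\}$.
3. The compatibility coming from $M$ cuts down the classes from $B$. Classes involving $a$ must act compatibly on $M$, where $a$ acts by zero. This is the reason the surviving ring is $K\oplus K[a,b]b$ in characteristic $2$, and $K\oplus K[a^2,b^2]b^2$ otherwise.

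Concretely, I would identify the cocycles $\chi_{i,j}$ dual to chains $a^ib^j$ taking value $b$ (or $1$), and check which are coboundaries using the $c$-components. I would alternatively use Happel's long exact sequence for one-point extensions,
\[
\cdots\to \rmH^n(\ldots)\to \rmHH^n(A)\to \rmHH^n(B)\to \mathrm{Ext}^n_B(M,M)\to\cdots,
\]
and identify $\rmHH^*(A)$ as the kernel of the restriction map $\rmHH^*(B)\to\mathrm{Ext}^*_B(M,M)$, up to nilpotent contributions. This sequence is the cleanest route.

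The third step is the cup product, which I compute via Yoneda composition or comparison maps on the $B$-part. The steps are:

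1. The elements of $B$ of positive radical degree ($a$, $b$, $ab$ acting as coefficients) give nilpotent classes.
2. The connecting terms involving $M$ produce square-zero classes.
3. Modulo $\calN$, only the polynomial part survives, which is $K[x,y]$ in characteristic $2$ or $K[x^2,y^2]$ otherwise.
4. This part is intersected with the kernel of the restriction to $\mathrm{Ext}_B(M,M)\cong K[y]$ (generated in degree $1$ by the $b$-direction). The restriction sends $x\mapsto0$ and $y\mapsto y$. The kernel condition therefore isolates the span of $1$ together with monomials divisible by the other generator, which after relabeling is $K\oplus K[a,b]b$.

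In characteristic $\ne2$ the degree-one classes square to zero by graded commutativity, so the same argument applies with $a^2,b^2$ as generators.

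I expect the main obstacle to be the third step: proving precisely that the kernel and nilradical bookkeeping is correct. In particular, one must show that the elements in $K[a,b]b$ are not nilpotent and not killed modulo $\calN$, while every class outside is nilpotent. This requires explicit cocycle representatives and careful signs in characteristic $\ne2$.
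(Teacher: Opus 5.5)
Your final plan is a genuinely different route from the paper's. You use Happel's long exact sequence for the one-point extension by $M=e_1Ae_2$, together with K\"unneth for $B=K[a]/(a^2)\otimes K[b]/(b^2)$. The paper never passes through $B$: it computes the whole ring $\rmHH^*(A)\cong K[X]/J$ from the explicit Anick resolution and the comparison maps $\Phi_*,\Psi_*$, then reads off $\calN$ from a Gr\"obner basis of $J$. Your route is cheaper for Theorem A alone, though it gives no presentation of $\rmHH^*(A)$, which the paper needs for the bracket. As written, however, it contains two errors.

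\textbf{Gr\"obner basis.} With $b>a$ the set $\{a^2,b^2,ba-ab,ac\}$ is not a Gr\"obner basis. The overlap $(ba-ab)c-b(ac)=-abc$ is irreducible although $abc\in I$, so $abc$ must be added. Then the chains ending at vertex $2$ include $(b,\dots,b,a,\dots,a,c)$ and $(b,\dots,b,a,\dots,a,bc)$, not just $(a,\dots,a,c)$, and the resolution is no longer minimal. The paper's order $a>b$, with tip $ab$, avoids this.

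\textbf{The Ext algebra.} $\mathrm{Ext}^*_B(M,M)$ is misidentified. Since $M=B/(a)$ is trivial over $K[a]/(a^2)$ and free over $K[b]/(b^2)$, you get $\mathrm{Ext}^*_B(M,M)\cong\mathrm{Ext}^*_{K[a]/(a^2)}(K,K)\otimes K[b]/(b^2)\cong K[s]\otimes K[b]/(b^2)$, with $|s|=1$ coming from the $a$-direction. So your assignment ``$x\mapsto 0$, $y\mapsto y$ with $y$ the $b$-direction'' is reversed. It is the $b$-direction polynomial classes that restrict to zero and survive, giving $K\oplus K[x_a,x_b]x_b$. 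This is consistent with the paper's $x_{n,3}=[f^n_{1,1}]$, dual to $(a,\dots,a,b)$. Your final ring comes out right only by the $a\leftrightarrow b$ symmetry.

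\textbf{The genuine gap.} It is the phrase ``up to nilpotent contributions'' together with the unproved claim that the connecting terms are square-zero. This is exactly the step you flag, and it carries the case $\rmChar(K)\neq 2$, which is not ``the same argument'' as characteristic $2$.

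In characteristic $2$, $\rho\colon\rmHH^*(B)\to\mathrm{Ext}^*_B(M,M)$ is onto in every degree. So $\rmHH^*(A)$ embeds in $\rmHH^*(B)$ and nothing extra is needed.

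When $\rmChar(K)\neq2$, the odd-degree classes of $\rmHH^*(K[a]/(a^2))$ are represented by cochains with value $a$, which acts by zero on $K$. Hence $\rho$ has image $K[s^2]\otimes K[b]/(b^2)$ and vanishes in odd degrees. Happel's sequence then gives a $2$-dimensional kernel of $\rmHH^n(A)\to\rmHH^n(B)$ for every even $n\ge 2$. These are the paper's $x_{n,1},x_{n,2}$, i.e.\ $[2f^n_{0,3}-f^n_{n+1,5}]$ and $[2f^n_{0,6}-f^n_{n+1,7}]$. You must show these classes are nilpotent, and that $\calN$ is precisely the preimage of the nilradical of $\rmHH^*(B)$.

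This can be filled as follows.
\begin{enumerate}
\item In the reduced complex, every tensor word contains at most one letter from $M=\mathrm{Span}_K\{c,bc\}$, necessarily the last.
\item A class in the kernel of $\rmHH^*(A)\to\rmHH^*(B)$ can be corrected by the coboundary of a $B$-cochain extended by zero. It is then represented by a cocycle supported on words containing an $M$-letter, with values in $M$.
\item The cup product of two such cochains vanishes identically. So the kernel is square-zero, and a homogeneous class with nilpotent image in $\rmHH^*(B)$ is nilpotent.
\item Consequently $\rmHH^*(A)/\calN$ is the image of $\rmHH^*(A)$ in $\rmHH^*(B)/\mathrm{nil}$, namely $K$ plus the image of $\ker\rho$.
\item You must still check that this image is exactly $x_bK[x_a,x_b]$, resp.\ $t_bK[t_a,t_b]$ with $|t_a|=|t_b|=2$. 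Since $\rho(\mathrm{nil})\subseteq K[s]\otimes b$ while $\rho$ of a polynomial lands in $K[s]\otimes 1$, an element $p+\nu$ with $p$ polynomial and $\nu$ nilpotent lies in $\ker\rho$ only if $p$ vanishes when the $b$-variable is set to $0$.
\end{enumerate}
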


These are well-known examples of non-Noetherian rings. Further counterexamples to the Snashall-Solberg conjecture have been obtained in \cite{HF12, XZ12}.

Hermann \cite{Her16} asked further whether the quotient of the Hochschild cohomology ring by the weak Gerstenhaber ideal generated by all homogeneous nilpotent elements, say $\rmHH^*(A)/G$, is a finitely generated algebra. He suggested considering first the  Xu--Snashall algebra.

The goal of this article is to answer Hermann's question   for the Xu--Snashall algebra. More precisely, we show the followng result:

\begin{thmB}[{see Theorem \ref{Main Theorem}}]\label{ThmB}
    For the Xu--Snashall algebra, $G=\calN$; hence, $\rmHH^*(A)/G=\rmHH^*(A)/\calN$ is not a finitely generated algebra.   Furthermore, we show that  $\rmHH^*(A)/\calG\cong K$. 
\end{thmB}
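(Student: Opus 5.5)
The plan is to compute $\rmHH^*(A)$ together with its cup product and Gerstenhaber bracket explicitly, and then read off $G$ and $\calG$ from the bracket table.

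\textbf{Step 1: a resolution.} Since $A=KQ/I$ has a quadratic monomial-plus-commutativity Gröbner basis, the two-sided Anick resolution $P_\bullet$ is small and explicit. With the order $b>a$, the tips are $a^2$, $b^2$, $ba$ and $ac$, and the obstructions (Anick chains) are words in $a,b,c$ of controlled shape. I will write down the differential of $P_\bullet$.

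\textbf{Step 2: a basis of $\rmHH^*(A)$ and the cup product.} Applying $\Hom_{A^e}(-,A)$ gives a complex computing $\rmHH^*(A)$. I will take a $K$-basis of cocycles modulo coboundaries and the cup product, which can be computed on the Anick resolution via a diagonal map. These recover the description in Theorem A of Xu and Snashall, namely
\begin{itemize}
\item a non-nilpotent part $K\oplus K[a,b]b$ (respectively $K\oplus K[a^2,b^2]b^2$ when $\rmChar(K)\neq 2$), given by classes supported on the loop part at vertex $1$;
\item a nilpotent part, including classes of degree $\ge 1$ involving $c$ or odd classes in characteristic $\ne 2$.
\end{itemize}

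\textbf{Step 3: the Lie bracket.} To compute $[-,-]$ I will use a weak self-homotopy on $P_\bullet$, which gives comparison morphisms to and from the bar resolution. Equivalently, I will use the formula expressing the bracket via $\phi\circ\Delta$ with homotopy lifts. Only brackets of the form $[\text{nilpotent},\text{anything}]$ are needed, since $G$ is the cup-ideal closed under brackets starting from $\calN$.

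\textbf{Step 4: $G=\calN$.} I will show that $\calN$ is already closed under the bracket with all of $\rmHH^*(A)$, i.e.\ that $[\calN,\rmHH^*(A)]\subseteq\calN$. It suffices to check this on generators of $\calN$ against algebra generators of $\rmHH^*(A)$, because the Poisson identity $[x,yz]=[x,y]z\pm y[x,z]$ propagates the inclusion to products. It follows that $G=\calN$, and non-finite generation is then exactly Theorem A.

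\textbf{Step 5: $\rmHH^*(A)/\calG\cong K$.} Here $\calG$ must also be closed under brackets of non-nilpotent elements with elements of $\calG$. The key is to find a nilpotent element $n$ and a non-nilpotent $x$ such that $[n,x]$ is non-nilpotent. I expect this to happen in low degree, for instance with $n$ a degree-one derivation-type class (derivations of $A$ sending a loop into the radical) and $x$ the classes $b$ or $b^2$. Bracketing with a suitable Euler-type or weight derivation scales $b^i a^j$ classes, and then a nilpotent degree-$0$ or degree-$1$ class could produce all generators $a^ib^{j+1}$ as brackets. Once every element of positive degree lies in $\calG$ this way (by the ideal property and induction on degree), $\rmHH^*(A)/\calG$ is concentrated in degree $0$. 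It remains to see that $Z(A)$ modulo nilpotents is $K$: $A$ is connected and elementary, so $Z(A)$ is local with radical nilpotent.

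\textbf{Main obstacle.} The bracket computations in Steps 3–4 are the main difficulty, especially proving that no bracket of a nilpotent element with $b^k$-type classes escapes $\calN$. This has to hold uniformly in all degrees and separately in characteristic $2$ and not $2$. I would first compute brackets with the degree-one generators and then use the Poisson rule to reduce the general case to those.
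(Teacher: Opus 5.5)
Your Step 4 aims at the wrong statement, and that statement is false. If $[\calN,\rmHH^*(A)]\subseteq\calN$ held, then $\calN$ would itself be a Gerstenhaber ideal. Then $\calG=\calN$, so $\rmHH^*(A)/\calG=\rmHH^*(A)/\calN$, which is infinite dimensional by Theorem A; this contradicts your own Step 5.

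Concretely, in characteristic $2$ the class $x_{m,4}=[f^m_{1,4}]$ is nilpotent, since $x_{m,4}x_{n,4}=0$. Yet $[x_{m,4},x_{n,3}]=x_{m+n-1,3}$, which maps to $x^{m+n-2}y\neq 0$ in $K\oplus K[x,y]y$. For $\rmChar(K)\neq 2$, the odd class $x_{1,3}=[f^1_{1,4}]$ squares to zero. Yet $[x_{m,3},x_{1,3}]=2x_{m,3}$ for even $m$, and $x_{m,3}\mapsto x^{m-2}y^2$. So the ``main obstacle'' you name cannot be overcome: brackets of nilpotent classes with $b$-type classes really do leave $\calN$.

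What $G=\calN$ needs is only $[\calN,\calN]\subseteq\calN$. A weak Gerstenhaber ideal is defined by $[G,G]\subseteq G$, not $[\rmHH^*(A),G]\subseteq G$. Your Poisson-rule reduction does work for this weaker statement. For ideal generators $n_1,n_2$ of $\calN$ and arbitrary $u,v$, the bracket $[n_1u,n_2v]$ equals $\pm[n_1,n_2]uv$ plus terms divisible by $n_1$ or $n_2$. So it suffices to check $[n_1,n_2]\in\calN$ for $n_1,n_2$ among $x_{n,1},x_{n,2},x_{n,4}$ in characteristic $2$, and among $x_{n,1},x_{n,2}$ and $x_{n,3}$ with $n$ odd otherwise. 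This is exactly how the paper argues from its bracket tables.

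Your Step 5 heuristic is exactly the mechanism the paper uses, and it is what refutes Step 4. The class $[f^1_{1,4}]$ is the weight derivation $b\mapsto b$, $a,c\mapsto 0$; it is $x_{1,4}$ in characteristic $2$ and $x_{1,3}$ otherwise. It is nilpotent, and it rescales the non-nilpotent generators by (up to sign) their $b$-weight: $1$ for $x_{n,3}$ in characteristic $2$, and $2$ for $x_{m,3}$ with $m$ even otherwise.

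Two smaller points:
\begin{itemize}
\item In degree $0$ you still have to show $1\notin\calG$, i.e.\ that $\mathrm{rad}\,Z(A)$ together with all positive-degree classes is stable under bracketing with all of $\rmHH^*(A)$. The paper does this by checking $[K[X]/J,V_1]\subseteq V_1$. Locality of $Z(A)$ alone does not give it, because in characteristic $2$ there are outer derivations not preserving $\mathrm{rad}\,A$, e.g.\ $b\mapsto e_1$, which is $x_{1,3}$, with $[x_{1,3},x_{0,2}]=x_{0,1}$. That bracket happens to stay in $\mathrm{rad}\,Z(A)$, but the check is not automatic.
\item In Step 1, with $b>a$ the set $\{a^2,b^2,ba-ab,ac\}$ is not a Gr\"obner--Shirshov basis. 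The overlap $bac$ of $ba$ and $ac$ produces $abc$, which must be added. The paper uses $a>b$, so the tip is $ab$ and this overlap does not occur.
\end{itemize}
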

The first statement of the above theorem  answers Hermann's question in the negative. Motivated by the second statement,  
one may therefore ask whether, for a finite dimensional algebra $A$,  $\rmHH^*(A)/\calG$  is always a finitely generated algebra. 

Recall that Snashall \cite{Sna09} computed $\rmHH^*(A)/\calN$ by using the fact that it is isomorphic to the graded center of the Koszul dual algebra modulo the ideal generated by nilpotent elements.  Oke \cite{Oke22} also considered the Xu--Snashall algebra and showed the same  result $G=\calN$ as ours by using the homotopy lifting method of Volkov \cite{Vol19}. However, he did not determine the Gerstenhaber algebra structure on $\rmHH^*(A)$. 

 In this paper, we give the  Gerstenhaber Lie bracket on $\rmHH^*(A)$ in terms of generators and relations (see Theorems~\ref{thm:  char two Gerstenhaber bracket} and \ref{thm:  char not two Gerstenhaber bracket}), and hence  Theorem B follows trivially. 
 Our strategy goes as follows. 
 We construct the two-sided Anick  resolution by using the algebraic Morse theory introduced in \cite{Koz05, Sko06, JW09} and further developed in \cite{Sko18, CLZ24}. To compute the cup product and the  Lie bracket, we construct comparison morphisms between the reduced bar resolution and the two-sided Anick    resolution; besides algebraic Morse theory, we also use the method of weak self-homotopies (see \cite[Chapter IX Theorem 6.2]{ML67}, \cite{BZZ09} and \cite[Section 2]{IIVZ15}).

This article is organized as follows. In Section \ref{sec: Preliminaries}, we recall noncommutative Gr\"obner-Shirshov systems, the Gerstenhaber structure on Hochschild cohomology, two-sided Anick resolutions and the method for constructing comparison morphisms between two projective resolutions via weak self-homotopies. In Section \ref{sec: Anick resolution}, we explicitly compute the two-sided Anick resolution $B_*^{\calM}$ of $A$. In Section \ref{sec: Hochschild cohomology groups}, we compute the Hochschild cohomology groups of $A$ and their dimensions as $K$-vector spaces. In Section \ref{sec: Comparison morphisms}, we construct the comparison morphisms between the reduced bar resolution $B_*$ of $A$ and the resolution $B_*^{\calM}$. In Section \ref{sec: Cup Product}, we describe a product on $B_*^{\calM}$, also called the cup product, which induces a well-defined product on $\rmHH^*(A)$. In Section \ref{sec: The Hochschild cohomology ring}, we show that $\rmHH^*
(A)$ is isomorphic to a polynomial ring in countably many variables modulo some relations, which allows us to recover the results obtained in \cite{Xu08, Sna09}. In Section \ref{Section: Gerstenhaber Lie Bracket}, we describe a bracket on $B_*^{\calM}$, also called
the Lie bracket, which induces a well-defined bracket on $\rmHH^*(A)$. We also obtain $G=\mathcal{N}$ and hence $\rmHH^*(A)/G=\rmHH^*(A)/\mathcal{N}$, showing that the algebra $A$ serves as a counterexample to Hermann's question. Furthermore, we obtain that $\rmHH^*(A)/\calG\cong K$.

Throughout this paper, this paper $K$ denotes a base field. 

\section{Preliminaries}\label{sec: Preliminaries}
\subsection{Noncommutative Gr\"obner-Shirshov systems}\mbox{}

Let \(\Lambda\) be an associative algebra  with a \(K\)-basis \(\calB\). 
We call \(\calB\) a multiplicative basis of \(\Lambda\) if  for any $b,  b'\in \calB$, $bb'\in \calB$ or $bb'=0$. The elements of \(\calB\) are called monomials.

A well-order  on \(\calB\) is a total order \(<\) such that there are no infinite descending chains in \(\calB\).
For a nonzero element \(v = \sum_{i=1}^{n} \alpha_i b_i \in \Lambda\) with $\alpha_i \in K^*=K\setminus \{0\}$, $b_i \in \calB$, denote by 
\(\Tip(v)\) the maximal \(b_i\) under \(<\), and by \(\CTip(v)\) its coefficient \(\alpha_i\).
For a nonempty subset \(X\) of \(\Lambda\), set
\[\Tip(X)=\{b \in \calB \mid b=\Tip(v)\text{ for some }v\in X\} \quad\text{and}\quad \Nontip(X)=\calB\setminus \Tip(X).\]

Let $I$ be a two-sided ideal of $\Lambda$. By \cite[Theorem 2.1]{Gre99}, there is a decomposition of vector spaces
    \[\Lambda = I \oplus \Span_K(\Nontip(I)).\]
Hence, the set \(\Nontip(I)\) is a basis of the quotient algebra \(\Lambda/I\).
%for any nonzero \(v\in \Lambda\), it can be written uniquely as \(v= i_v +N(v)\) with \(i_v \in I\) and \(N(v) \in \Span_K(\Nontip(I))\). 
%\(N(v)\) is called the normal form of \(v\).  

A well-order \(<\) on \(\calB\) is admissible if  it satisfies the following conditions: 
\begin{enumerate}
\item If \(pr \neq 0\) and \(qr \neq 0\), then \(p < q\) implies \(pr < qr\),
\item if \(sp \neq 0\) and \(sq \neq 0\), then \(p < q\) implies \(sp < sq\),
\item if \(p = qr\), then \(p \ge q\) and \(p \ge r\),
\end{enumerate}
for all \(p, q, r, s \in \calB\). 
For instance, the left length-lexicographic order is an example of an admissible order.

\begin{definition} [{\cite[Definition 2.4]{Gre99}}]
     Let \(\Lambda\) be a \(K\)-algebra with multiplicative basis \(\calB\) and admissible order \(<\). For an ideal \( I\subset \Lambda\), a subset \( \mathscr{G} \subset I \) is a Gr\"obner-Shirshov system (or generating set) for \( I \) with respect to \( < \) if  \(\langle \Tip(\mathscr{G}) \rangle = \langle \Tip(I) \rangle \).
\end{definition}

For \(p,q \in \calB \), \(p\) divides \(q\) if there are \(a,b \in \calB\) such that \(q=apb\). Let \(I\) be an ideal of \(\Lambda\) generated by a  set \(X\) such that, for any \(v\in \Lambda\), the set \(\{x\in X \mid \Tip(x) \le \Tip(v)\}\) is finite.  
For a fixed nonzero \(v\in \Lambda\), enumerate this finite set as \(x_1, \dots, x_n\). Following \cite[Section 2.3.2]{Gre99}, we say that \(v\) reduces to \(r\) modulo \(X\), written \(v \Rightarrow_X r\), if
there exist \( m_1, \ldots, m_n \in \mathbb{N}\) and elements \( u_{i,j}, w_{i,j}, r \in \Lambda \) for \( 1\le i \le n\) and \( 1\le j \le m_i \) such that \begin{enumerate}
    \item \( v = \sum_{i=1}^n \sum_{j=1}^{m_i} u_{i,j} x_i w_{i,j} + r\);
    \item \(\Tip(v) \ge \Tip(u_{i,j} x_i w_{i,j}) \text{ for all } i, j\);
    \item for \( b \in \calB \) occurring in \( r \), \( \Tip(x) \) does not divide \( b \), for all \( x \in X \).
\end{enumerate}
In this case, \(r\) is called the remainder of \(v\) upon division by \(X\).

Let \( f, g \in \Lambda \). Suppose that there are \( a, b \in \calB \) such that $\Tip(f)a = b\Tip(g)$, $\Tip(f) \nmid b$ and $\Tip(g) \nmid a$.
Then the overlap relation of \( f \) and \( g \) by \( a,b \) is defined as
\[
o(f, g, a, b) = \frac{1}{\CTip(f)} f a - \frac{1}{\CTip(g)} b g.
\]

Recall that a  subset \(X\subset \Lambda\) is called tip reduced if for any distinct \(x,y \in X\), \( \Tip(x) \nmid \Tip(y)\).
Let \(v = \sum_{i=1}^{n} \alpha_i b_i \in \Lambda\) be a nonzero element, where \(\alpha_i \in K^*\) and \(b_i \in \calB\). The element \(v\) is said to be  uniform if for each \(c \in \calB\), either \(cb_i = 0\) for all \(i\)  or \(cb_i \neq 0\) for all \(i\); and for each \(d \in \calB\), either \(b_id = 0\) for all \(i\) or \(b_id \neq 0\) for all \(i\).

Green gave the following termination theorem \cite[Theorem 2.3]{Gre99}, which provides a criterion for a generating set of an ideal to be a Gr\"obner-Shirshov system.  This is a several vertices version of the classical Diamond-Composition Lemma. 
\begin{theorem} \label{thm: GS basis}
    Let \( \Lambda \) be a \( K \)-algebra with multiplicative basis \( \calB \) and admissible order \( < \). Suppose that \( \mathscr{G}\) is a set of uniform, tip reduced elements in \( \Lambda \). If for every overlap relation
\[
o(g_1, g_2, p, q) \Rightarrow_\mathscr{G} 0,
\]
with \( g_1, g_2 \in \mathscr{G} \) and \(p,q \in \calB \), then \( \mathscr{G} \) is a Gr\"obner-Shirshov system for \( \langle \mathscr{G} \rangle \).
\end{theorem}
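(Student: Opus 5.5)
We prove the termination theorem (Theorem~\ref{thm: GS basis}) by the standard Diamond-Lemma strategy. The goal is to show that every element of $I=\langle \mathscr{G}\rangle$ reduces to $0$ modulo $\mathscr{G}$, or equivalently that $\Tip(I)\subseteq \langle \Tip(\mathscr{G})\rangle$; the reverse inclusion is trivial because $\mathscr{G}\subset I$.

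\textbf{Step 1: representations of elements of $I$.} Take a nonzero $v\in I$ and write
\[ v=\sum_k \lambda_k u_k g_k w_k, \qquad u_k,w_k\in\calB,\ g_k\in\mathscr{G}, \]
with each $u_k g_k w_k\neq 0$. Uniformity of $g_k$ lets us assume that $u_k\Tip(g_k)w_k\neq 0$ and that this product equals $\Tip(u_kg_kw_k)$; here admissibility conditions (1)--(2) guarantee that multiplying by monomials preserves the order of the surviving terms. For such a representation set
\[ M=\max_k u_k\Tip(g_k)w_k. \]
Since $<$ is a well-order, we may choose a representation with $M$ minimal. If $M=\Tip(v)$, then $\Tip(v)=u\Tip(g)w$ for some $g\in\mathscr{G}$, so $\Tip(v)\in\langle\Tip(\mathscr{G})\rangle$ and we are done. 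Otherwise $M>\Tip(v)$, and the terms achieving $M$ must cancel in their leading coefficients.

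\textbf{Step 2: lowering $M$.} Consider two of the terms achieving $M$, say $u_1\Tip(g_1)w_1=u_2\Tip(g_2)w_2=M$. Compare the positions of the two occurrences of $\Tip(g_1)$ and $\Tip(g_2)$ inside the path $M$. There are three cases.
\begin{enumerate}
\item The occurrences are disjoint. Then $M=u_1\Tip(g_1)\,c\,\Tip(g_2)w_2$, and
\[ u_1g_1c\Tip(g_2)w_2-u_1\Tip(g_1)cg_2w_2 \]
is a combination of terms of the form $u g_i w$ with tips $<M$. This uses only admissibility.
\item One occurrence is contained in the other. Tip reducedness forces $g_1=g_2$ at the same position, so the two terms combine.
\item The occurrences overlap properly. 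Then $M=u\,\Tip(g_1)a\,w=u\,b\Tip(g_2)\,w$, and
\[ u_1g_1w_1/\CTip(g_1)-u_2g_2w_2/\CTip(g_2)=u\,o(g_1,g_2,a,b)\,w. \]
By hypothesis $o(g_1,g_2,a,b)\Rightarrow_{\mathscr{G}}0$, so it is a combination of terms $u'g'w'$ with $\Tip\le\Tip(o)<\Tip(g_1)a$. Multiplying on the left by $u$ and on the right by $w$, these terms have tips $<M$, again by admissibility; uniformity ensures the surviving products stay in the same comparison.
\end{enumerate}
Pairing each term at level $M$ with a fixed one, and using that the coefficients at $M$ sum to zero, we rewrite $v$ with either a smaller $M$ or fewer terms at level $M$. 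This contradicts minimality, so $M=\Tip(v)$.

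\textbf{The main obstacle.} We expect the delicate point to be the bookkeeping in the several-vertex setting, where products of monomials can vanish. We must check that multiplying the reduction of an overlap by $u,w$ does not produce zero products that break the order comparisons. The plan is to handle this with uniformity together with conditions (1)--(2) of admissibility: each summand either vanishes entirely or keeps its relative order. We also need the finiteness assumption on $\{x\mid \Tip(x)\le\Tip(v)\}$ so that reductions are finite sums; and since the overlap reduction hypothesis is stated with condition (2) of the definition of reduction, the bound $\Tip\le\Tip(o)$ is available directly.
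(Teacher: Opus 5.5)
Your argument is the standard Diamond-Lemma (Buchberger) proof, but it silently assumes that $\Lambda$ is a path algebra $KQ$ with $\calB$ the set of paths, whereas the statement is for an arbitrary algebra with a multiplicative basis. The step that fails is the one you flagged yourself as the main obstacle:
\begin{itemize}
\item the paper's notion of uniform only controls one-sided products $cb_i$ and $b_id$, so it does \emph{not} give $u\Tip(g)w\neq 0$ whenever $ugw\neq 0$, nor $\Tip(ugw)=u\Tip(g)w$;
\item your Step 2 case analysis by positions of occurrences inside ``the path $M$'' has no meaning for a general $\calB$.
\end{itemize}
This cannot be repaired, because the statement is false in that generality. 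Take $\Lambda=K\langle x,y,y',z\rangle/(xyz)$, with $\calB$ the words not containing $xyz$ and the deg-lex order with $y'<y$, which is admissible on $\calB$. Let $\mathscr{G}=\{g\}$ with $g=y-y'$.
\begin{itemize}
\item Since $xyz$ begins with $x$ and ends with $z$, none of $cy, cy', yd, y'd$ ($c,d\in\calB$) is zero, so $g$ is uniform.
\item A singleton is tip reduced.
\item $yp=qy$ with $y\nmid p$, $y\nmid q$ forces $p=q=1$, so the only overlap relation is $o(g,g,1,1)=0$.
\end{itemize}
Nevertheless $xgz=-xy'z$ is a nonzero element of $\langle g\rangle$ whose tip $xy'z$ does not lie in $\langle y\rangle=\langle\Tip(\mathscr{G})\rangle$. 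Here $x\Tip(g)z=xyz=0\neq xgz$, which is exactly where your Step 1 breaks down.

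The paper gives no proof of its own. It quotes Green's Theorem 2.3 and only ever applies it to path algebras: $KQ$ in Section 3 and the free algebra $K\langle X\rangle$ in Section 6. So the fix is to state and prove the result for $\Lambda=KQ$. There, uniform means all paths in the support share source and target, so $ugw\neq0$ iff $u\Tip(g)w\neq0$, and admissibility then gives $\Tip(ugw)=u\Tip(g)w$. This also justifies your remark that the bound $\Tip\le\Tip(o)$ is ``available directly'': the paper's definition of $\Rightarrow_{\mathscr{G}}$ allows $u_{i,j},w_{i,j}\in\Lambda$, but for uniform $x$ every monomial piece of $uxw$ has tip $\le\Tip(uxw)$, so the bound passes to monomial coefficients. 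With that restriction your proof is correct, up to two small points.
\begin{itemize}
\item In case (1) the displayed difference still has both terms at level $M$. You need the rewriting, for monic $g_1,g_2$,
\[
u_1g_1c\Tip(g_2)w_2-u_1\Tip(g_1)cg_2w_2=u_1g_1c(\Tip(g_2)-g_2)w_2+u_1(g_1-\Tip(g_1))cg_2w_2 .
\]
\item In the final step, let $h_j$ be the normalized level-$M$ terms with coefficients $\mu_j$, $\sum_j\mu_j=0$. Then $\sum_j\mu_jh_j=\sum_{j\ge2}\mu_j(h_j-h_1)$ removes all level-$M$ terms at once, so $M$ strictly drops. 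Your alternative ``fewer terms at level $M$'' would not by itself contradict minimality of $M$.
\end{itemize}
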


\begin{definition}\label{regs}
    A Gr\"obner-Shirshov system \(\mathscr{G}\) for an ideal  \(I\) is called reduced if for all distinct \(g,h \in \mathscr{G}\), \( \Tip(g) \nmid \Tip(h)\), and for every $g \in \mathscr{G}$, $\CTip(g) = 1$ and $g - \Tip(g) \in \Span_K(\Nontip(I))$.
\end{definition}

Note that a reduced Gr\"obner-Shirshov system  is unique.

%---------------------------------------------------------------------------------------%

\subsection{Hochschild cohomology and Gerstenhaber algebras}\mbox{}

In this section, we recall the definition of a Gerstenhaber algebra. 
%and then describe how Hochschild cohomology naturally carries a Gerstenhaber algebra structure.
\begin{definition}
    A Gerstenhaber algebra   is a graded \(K\)-vector space \(\rmH = \bigoplus_{n\in\mathbb{Z}} \rmH^n\) equipped with two bilinear maps: a cup product
\[\cup: \rmH^n \times \rmH^m \to \rmH^{n+m}, \quad (a, b) \mapsto a \cup b\]
and a Lie bracket of degree \(-1\)
\[[-,-]: \rmH^n \times \rmH^m \to \rmH^{n+m-1}, \quad (a, b) \mapsto [a, b]\]
such that for all homogeneous elements \(a, b, c \in \rmH\) (with \(|a|\) denoting the degree of \(a\)):
\begin{enumerate}
    \item \((\rmH, \cup)\) is a graded commutative algebra, that is, \(\cup\) is an associative multiplication satisfying \(a \cup b = (-1)^{|a| \cdot |b|} b \cup a\);
    \item \((\rmH, [-,-])\) is a graded Lie algebra of degree \(-1\), that is, \( [-,-]\) satisfies
\[[a, b] = -(-1)^{(|a|-1)(|b|-1)}[b, a]\]
and the graded Jacobi identity
\[(-1)^{(|a|-1)(|c|-1)}[[a, b], c] + (-1)^{(|b|-1)(|a|-1)}[[b, c], a] + (-1)^{(|c|-1)(|b|-1)}[[c, a], b] = 0;\]
    \item the cup product and the Lie bracket satisfy the compatibility condition called the Poisson rule, that is,
\[[a, b\cup c] = [a, b]\cup  c + (-1)^{(|a|-1)|b|}b\cup [a, c].\]
\end{enumerate}
\end{definition}
Hochschild \cite{Hoc45} introduced cohomology of associative algebras, and later Gerstenhaber \cite{Ger63} showed that the Hochschild cohomology ring admits a Gerstenhaber algebra structure.  We briefly recall the definition of Hochschild cohomology and describe how this Gerstenhaber algebra structure arises.

 Let \( A \) be an associative \(K\)-algebra with unit \( 1 \). Denote its enveloping algebra by \( A^e = A \otimes_K A^{\text{op}} \). There is a canonical projective resolution $\rmBar_*(A)$  of the \(A^e\)-module \(A\), called the (unnormalized) bar resolution : 
\[\rmBar_*(A): \cdots \to A \otimes_K A^{\otimes_K n} \otimes_K A \xrightarrow{b_n} A \otimes_K A^{\otimes_K (n-1)} \otimes_K A \to \cdots \to A \otimes_K A \otimes_K A \xrightarrow{b_1} A\otimes_K A (\xrightarrow{\mu} A\to 0),\]
where   \(\rmBar_n(A)=A^{\otimes_K {(n+2)}},  n\geq 0\), \(\mu: A \otimes_K A \to A\) is the multiplication of \(A\), and for $n\geq 1$, 
\[
b_n(a_0 \otimes a_1 \otimes \cdots \otimes a_n \otimes a_{n+1}) = \sum_{i=0}^n (-1)^i a_0 \otimes \cdots \otimes a_{i-1} \otimes a_i a_{i+1} \otimes a_{i+2} \otimes \cdots \otimes a_{n+1}.
\]

Applying the functor \(\Hom_{A^e}(-,A)\) to the bar resolution $\rmBar_*(A)$ gives the Hochschild cochain complex \(C^*(A) = \Hom_{A^e}(\rmBar_*(A), A)\). For each \(n \ge 0\), there is a \(K\)-space isomorphism:
\[
C^n(A) =\Hom_{A^e}(\rmBar_n(A), A) \xrightarrow{\sim} \Hom_K(A^{\otimes_K n}, A), \quad f \mapsto \bigl( a_1 \otimes \cdots \otimes a_n \mapsto f(1 \otimes a_1 \otimes \cdots \otimes a_n \otimes 1) \bigr),
\]
where by convention \(A^{\otimes_K 0}=K\). Thus we obtain the complex, still denoted by $C^*(A)$, 
\[
C^*(A): A=\Hom_{{K}}({K}, A) \xrightarrow{\delta^0} \Hom_{K}(A, A) \to \cdots \to \Hom_{{K}}(A^{\otimes_K n}, A) \xrightarrow{\delta^{n}} \Hom_{K}(A^{\otimes_K {(n+1)}}, A) \to \cdots,
\]
where for \(n \ge 0\),
%a linear map \(f: A^{\otimes_K {n-1}} \to A\), and \(a_1, \dots, a_{n} \in A\),
\[
\begin{aligned}
\delta^n(f)(a_1 \otimes \cdots \otimes a_{n+1})& =-(-1)^n f\circ b_{n+1}(a_1 \otimes \cdots \otimes a_{n+1}) \\
&=(-1)^{n+1} a_1 f(a_2 \otimes \cdots \otimes a_{n+1}) \\
&\quad + \sum_{i=1}^{n} (-1)^{n+1-i} f(a_1 \otimes \cdots \otimes a_{i-1} \otimes a_i a_{i+1} \otimes a_{i+2} \otimes \cdots \otimes a_{n+1}) \\
&\quad +  f(a_1 \otimes \cdots \otimes a_{n}) a_{n+1}.
\end{aligned}
\]
\begin{definition}
    The Hochschild cohomology \(\rmHH^*(A)\) of $A$ is the cohomology of the complex \(C^*(A)\).
\end{definition}
Since \(\rmBar_*(A)\) is a projective \(A^e\)-resolution of \(A\), we have, for $n\ge 0$,
\[
\rmHH^n(A) \cong \operatorname{Ext}_{A^e}^{n}(A, A).
\]
 
  For \(f \in C^m(A)\) and \(g \in C^n(A)\), the cup product \(f \cup g \in C^{m+n}(A)= \Hom_K(A^{\otimes_K (m+n)}, A) \) is given by
\[f\cup g(a_1\otimes \cdots\otimes a_{m+n})=(-1)^{mn} f(a_1\otimes \cdots\otimes a_m) g(a_{m+1}\otimes \cdots\otimes a_{m+n}).\]
When \(m=0\) or \(n=0\), \(f\) or \(g\) is evaluated at \(1_K \in K\).  Moreover, the Hochschild differential $\delta^*$ is a derivation with respect to the cup product. Hence the cup product induces a well-defined product on cohomology,
\[
\cup: \rmHH^m(A) \times \rmHH^n(A) \to \rmHH^{m+n}(A).
\]
By \cite[Section 7, Corollary 1]{Ger63}, $\rmHH^*(A)$  with this cup product is a graded commutative algebra.

The Gerstenhaber Lie bracket is defined as follows. Let \(f \in C^m(A)\) and \(g \in C^n(A)\). If \(m,n\ge 1\), then for \(1 \le i \le m\), \(f \circ_i g \in C^{m+n-1}(A) \) is given by
\[ f \circ_i g(a_1\otimes\cdots\otimes a_{m+n-1}) =(-1)^{(i-1)(n-1)} f(a_1\otimes\cdots\otimes a_{i-1}\otimes g(a_i\otimes \cdots\otimes a_{i+n-1})\otimes a_{i+n}\otimes \cdots\otimes a_{m+n-1}).\]
If \(m\ge 1\) and \(n=0\), then $g=g(1_K)\in A$ and  for \(1 \le i \le m\), \(f \circ_i g \in C^{m-1}(A) \) is given by
\[ f \circ_i g(a_1\otimes\cdots\otimes a_{m-1}) =(-1)^{i-1} f(a_1\otimes\cdots\otimes a_{i-1}\otimes g(1_K)\otimes a_i\otimes \cdots\otimes a_{m-1}).\]
If \(m=0\), then define \(f \circ_i g =0 \). Define
\[f \overline{\circ} g = \sum_{i=1}^{m}  f \circ_i g, \quad  [f, g] = f \overline{\circ} g - (-1)^{(m-1)(n-1)} g \overline{\circ} f. \]
% By \cite[page 279]{Ger63}, \((C^*(A), [-,-])\) is a graded Lie algebra. Moreover, the differential \(b\) is a left derivation of \((C^*(A), [-,-])\)(see \cite[page 280]{Ger63}):
% \[b([f, g]) = [b(f), g] + (-1)^{|f|-1}[f, b(g)], \forall f, g \in C^*(A),\]
% This identity ensures that the Lie bracket is well-defined on the Hochschild cohomology groups, inducing a Lie bracket 
% \[[-,-]: \rmHH^m(A) \times \rmHH^n(A) \to \rmHH^{m+n-1}(A)\]
% By \cite{Ger63}, this induced Lie bracket makes \(\rmHH^*(A)\) into a graded Lie algebra of degree -1. 
% And the Lie bracket and the cup product are compatible \cite[Section 8]{Ger63}. All in all, the Hochschild cohomology \(\rmHH^*(A)\) is a Gerstenhaber algebra.
%By \cite[p. 279]{Ger63}, \((C^*(A), [-,-])\) is a graded Lie algebra. 
Moreover, the Hochschild differential \(\delta^*\) is a  derivation with respect to  this Lie bracket (\cite[Page 280]{Ger63}):
\[
\delta^*([f, g]) = [\delta^*(f), g] + (-1)^{|f|-1} [f, \delta^*(g)].
\]
Hence, the Lie bracket induces a well-defined bracket on cohomology
\[
[-,-]: \rmHH^m(A) \times \rmHH^n(A) \to \rmHH^{m+n-1}(A).
\]
By \cite[Theorem 8]{Ger63}, \(\rmHH^*(A)\) with this induced bracket is a graded Lie algebra of degree \(-1\). 

Furthermore, Gerstenhaber showed that the Lie bracket and the cup product satisfy the Poisson rule in \cite[Section 8]{Ger63}. Therefore, \(\rmHH^*(A)\), equipped with these two operations, is a Gerstenhaber algebra.

%---------------------------------------------------------------------------------------%
\subsection{Two-sided Anick resolutions}\mbox{}

Let \(Q=(Q_0, Q_1, s, t)\) be a finite quiver with vertex set \(Q_0\), arrow set \(Q_1\), and with two maps \(s,t: Q_1 \rightarrow Q_0\) assigning each arrow its source \(s(\alpha)\) and  target \(t(\alpha)\) to  each arrow \( \alpha \in Q_1\), respectively. A path in \(Q\) is a sequence of arrows \(\alpha_1 \alpha_2 \cdots \alpha_r\) with \(r\ge 1\) such that \(t(\alpha _i)= s(\alpha_{i+1}) \) for \(1 \le i \le r-1\), whose  length is defined to be \(r\). The vertices are considered as paths of length $0$. For \(n \in \mathbb{N}\), denote by \(Q_n\) the set of paths of length \(n\) and by \(Q_{\ge n}\) the set of paths of length at least \(n\). Denote by \(\calB= Q_{\ge 0}\),  the set of all finite paths in \(Q\). 
The path algebra \(KQ\) of \(Q\) is the \(K\)-vector space with  basis \(\calB\), whose multiplication is given by concatenation of paths. Moreover, $\calB$ is a multiplicative basis of  \(KQ\).

Let \(I\subseteq\Span_K(Q_{\ge2})\) be a two-sided ideal of \(KQ\), and let \(\mathscr{G}\) be a reduced Gr\"obner-Shirshov system for \(I\) with respect to an admissible order \(<\). Set \(\calW=\Tip(\mathscr{G})\) and \(\calB_+ =\calB\setminus Q_0\). A monomial \(u\in \calB_+\) is a proper right  (left) factor of \(v \in \calW\) if \(v=bu\) (\(v=ub\)) for some \(b \in \calB_+\).

Following \cite[Section 4]{CLZ24}, we recall the Ufnarovski\u{\i} graph  \(Q_{\calW} = (V, E)\) with respect to \(\calW\), originally introduced by Ufnarovski\u{\i} \cite{Ufn89}. Its vertex set is
\[V = Q_0 \cup Q_1 \cup \{ u \in \calB \mid u \text{ is a proper right factor of some } v \in \calW \},\]
and its arrow set is
\[\begin{aligned}
    E &= \{ e \to x \mid e \in Q_0, x  \in Q_1\  \text{with}\  e=s(x)\} \cup \\
& \quad \quad\quad\quad\{ u \to v  \mid uv \in \calB \cap \langle \mathrm{Tip}(I) \rangle, w \notin \langle \mathrm{Tip}(I) \rangle \text{ for all proper left factors } w \text{ of } uv \}.
\end{aligned}
\]

For \(i \ge 0\), let \(\calW^{(i)}\) be the set of Anick \(i\)-chains, that is, all sequences \((v_1, v_2, \dots, v_i, v_{i+1}) \in \calB_+^{i+1} \)  such that 
\[
s(v_1) \to v_1 \to v_2 \to \cdots \to v_i \to v_{i+1}
\]
forms a path in \(Q_{\calW}\). By definition, \(\calW^{(-1)} = Q_0\), $\calW^{(0)}=Q_1$ and  for $(v_1, v_2)\in \calW^{(1)}$, $v_1$ is an arrow and $v_1v_2\in \calW$.

Let \(A = KQ/I\) with identity $1_A=\sum_{e\in Q_0}e$ and let \(E = \bigoplus_{e \in Q_0} Ke \subset A\). 
Then \(E\) is a semisimple subalgebra of \(A\), and \(A = E \oplus A_+\) as vector spaces, where \(A_+ = \Span_K(\Nontip(I)\setminus Q_0)\).
Following \cite[Lemma 2.1]{Cib90}, the reduced two-sided bar resolution \(B_*=(B_*, d)\) of \(A\) is defined by
\[B_0 = A \otimes_E A,\quad B_n = A \otimes_E (A_+)^{\otimes_E n} \otimes_E A \cong A^e \otimes_{E^e} (A_+)^{\otimes_E n},\; n \geq 1,\]
 where \(E^e= E \otimes_K E^{op}\). For  $n\ge 1$, the differential 
 $$d_n: B_n=A^e \otimes_{E^e} (A_+)^{\otimes_E n} \to B_{n-1}=A^e \otimes_{E^e} (A_+)^{\otimes_E (n-1)}$$
 is given by
 \begin{equation}
\begin{split}
d_n((1_A\otimes_K 1_A)\otimes_{E^e}(a_1\otimes_E \cdots \otimes_E a_n) ) =&  (a_1 \otimes_K 1_A)\otimes_{E^e} (a_2\otimes_E \cdots \otimes_E a_n) \\
&+ \sum_{i=1}^{n-1} (-1)^i (1_A\otimes_K 1_A)\otimes_{E^e} (a_1\otimes_E \cdots \otimes_E a_i a_{i+1}\otimes_E \cdots \otimes_E a_n) \\
&+ (-1)^n (1_A \otimes_K a_n)\otimes_{E^e} (a_1\otimes_E \cdots \otimes_E a_{n-1}) 
\end{split}
\end{equation}
The augmentation  \( d_0 : B_0=A \otimes_E A \to A \) is given by \( d_0(a_0\otimes_E a_1)=a_0a_1\).

As an \(A^e\)-module, \(B_n\) admits a direct sum decomposition:
\[ B_n = \bigoplus_{(w_1, w_2, \cdots, w_n)\in \mathcal{V}_n } A^e \otimes_{E^e} K( w_1 \otimes_E w_2 \otimes_E \cdots \otimes_E w_n) = \bigoplus_{(w_1, w_2, \cdots, w_n)\in \mathcal{V}_n } A^e \cdot (1 \otimes_E w_1 \otimes_E w_2 \otimes_E \cdots \otimes_E w_n \otimes_E 1), \]
where for $n\ge 1$,
\[\mathcal{V}_n = \{ (w_1, w_2, \cdots, w_n) \mid 
w_1, \cdots, w_n \in \Nontip(I) \setminus Q_0,  w_1 w_2 \cdots w_n \text{ is a path in } Q \},\] 
and \(\mathcal{V}_0=Q_0\).
%the direct sum runs over all sequences \((w_1, w_2, \cdots, w_n)\) satisfying \(w_i \in \Nontip(I) \setminus Q_0\) for all \(1 \leq i \leq n\) and such that \(w_1w_2\cdots w_n\) is a path in \(Q\). 
For each such sequence \((w_1, w_2, \cdots, w_n)\), denote \(s(w_1w_2\cdots w_n) = s(w_1)= e_i\) and \(t(w_1w_2\cdots w_n) = t(w_n) = e_j\). Then each direct summand \( A^e \cdot (1 \otimes_E w_1 \otimes_E w_2 \otimes_E \cdots \otimes_E w_n \otimes_E 1)\) can be identified with the indecomposable projective \(A^e\)-module \(Ae_i \otimes_K e_jA\). 

Recall the construction of  the weighted quiver \(\overline{Q}_B\) for $B_*$. The vertex set of \(\overline{Q}_B\) is 
$$\{1 \otimes_E w_1 \otimes_E w_2 \otimes_E \cdots \otimes_E w_n \otimes_E 1\mid (w_1, w_2, \cdots, w_n)\in \mathcal{V}_n, n\in \mathbb{N}\}.$$
 For simplicity, we identify each vertex $1 \otimes_E w_1 \otimes_E w_2 \otimes_E \cdots \otimes_E w_n \otimes_E 1$ with the sequence \((w_1, w_2, \cdots, w_n)\).
 The arrows of \(\overline{Q}_B\) starting from a vertex \((w_1, w_2, \cdots, w_n)\)  are as follows: 
    \begin{enumerate}
        \item There is an arrow 
\[\begin{tikzpicture}[>=Stealth, baseline=(current bounding box.center)]
    \node (top) at (0, 0.4) {$(w_1, w_2, \cdots, w_n)$};
    \node (bot) at (5.5, -0.5) {$(w_2, \cdots, w_n)$};
    \draw[->] (top.south east) -- (bot.north west) 
        node[midway, above, font=\small, yshift=0pt] {$d_n^0$}
        node[midway, below, font=\small, yshift=-0pt] {$w_1 \otimes 1$};
\end{tikzpicture}\]
with weight $w_1 \otimes 1$;

         \item for $1 \le i \le n-1$, assume that $w_i w_{i+1} \equiv \sum_j \lambda_j u_j \pmod{I}$ with all $u_j \in \Nontip(I)\setminus Q_0$ and $\lambda_j \in K^*$, and for each \(j\), we have an arrow 
\[\begin{tikzpicture}[>=Stealth, baseline=(current bounding box.center)]
    \node (top) at (0, 0.4) {$(w_1, \cdots, w_{i-1}, w_i, w_{i+1}, \cdots, w_n)$};
    \node (bot) at (7, -0.5) {$(w_1, \cdots, w_{i-1}, u_j, w_{i+2}, \cdots, w_n)$};
    \draw[->] (top.south east) -- (bot.north west) 
        node[midway, above, font=\small, yshift=0pt] {$d_n^i$}
        node[midway, below, font=\small, yshift=-0pt] {$(-1)^i\lambda_j$};
\end{tikzpicture}\]
which  is also denoted by \(d_n^i\) and has weight \((-1)^i\lambda_j\);

        \item there is an arrow
\[\begin{tikzpicture}[>=Stealth, baseline=(current bounding box.center)]
    \node (top) at (0, 0.4) {$(w_1, \cdots, w_{n-1}, w_n)$};
    \node (bot) at (5.5, -0.5) {$(w_1, \cdots, w_{n-1})$};
    \draw[->] (top.south east) -- (bot.north west) 
        node[midway, above, font=\small, yshift=0pt] {$d_n^n$}
        node[midway, below, font=\small, yshift=-0pt] {$(-1)^n 1 \otimes w_n$};
\end{tikzpicture}\]
with weight \((-1)^n 1 \otimes w_n\).
    \end{enumerate}

For \(w \in \calB\), let \(V_{w,i}\) be the set of all vertices \((w_1, \dots, w_n)\) in \(\overline{Q}_B\) such that \(w = w_1 \cdots w_n\) and \(i\) is the largest integer (\(\ge-1\)) satisfying \((w_1, \cdots, w_{i+1})\) is an Anick \(i\)-chain. 

Let \(\calM\) be the set of arrows in \(\overline{Q}_B\) of the form
\begin{equation} \label{arrow in M}
    (w_1, \cdots, w_{i-1}, w_i, w_{i+1}, w_{i+2}, \cdots, w_n) \to (w_1, \cdots, w_{i-1}, w_i w_{i+1}, w_{i+2}, \cdots, w_n)
\end{equation}
where \((w_1, \cdots, w_{i-1}, w_i, w_{i+1}, w_{i+2}, \cdots, w_n) \in V_{w,i-1}\), \((w_1, \dots, w_{i-1}, w_i w_{i+1}, w_{i+2}, \dots, w_n) \in V_{w,i-2}\). Note that 

\noindent here \(1\le i\le n-1\), \(w_i w_{i+1} \in \Nontip(I) \setminus Q_0\), and the arrow (\ref{arrow in M}) has weight \((-1)^i\).
Moreover, by \cite[Page 15]{CLZ24}, \(\calM\) is a partial matching of \(\overline{Q}_B\); that is, every vertex in \(\overline{Q}_B\) is incident to at most one arrow in \(\calM\) and every arrow in \(\calM\) has invertible weight.

Under this partial matching \(\calM\), a new weighted quiver \(\overline{Q}_B^{\calM}\)  is obtained from \(\overline{Q}_B\) as follows:
\begin{enumerate}
    \item[(1)] The arrows in \(\overline{Q}_B\) that are not in \(\calM\) are retained and drawn as thick arrows;
    \item[(2)] each arrow in \(\calM\) of the form (\ref{arrow in M}) 
     %\[
    %d_n^i: (w_1, \dots, w_{i-1}, w_i, w_{i+1}, w_{i+2}, \dots, w_n) \to (w_1, \dots, w_{i-1}, w_i w_{i+1}, w_{i+2}, \dots, w_n)
   % \]
    is replaced by a dotted arrow  in the opposite direction, denoted by \(d_n^{-i}\), whose weight is the negative of the original weight. More precisely, the arrow \(d_n^{-i}\) has weight \((-1)^{i+1}\) and is given by 
\[
\begin{tikzpicture}[>=Stealth, baseline=(current bounding box.center)]
    % 正确定义节点
    \node (top) at (0, 0.4)  {$(w_1, \cdots, w_{i-1}, w_i w_{i+1}, w_{i+2}, \dots, w_n)$};
    \node (bot) at (-7.5, -0.5) {$(w_1, \cdots, w_{i-1}, w_i, w_{i+1}, w_{i+2}, \dots, w_n)$};
    
    % 虚线箭头 + 标注
    \draw[->, dashed] (top.south west) -- (bot.north east) 
        node[midway, above, font=\small, yshift=0pt] {$d_n^{-i}$}
        node[midway, below, font=\small, yshift=-0pt] {$(-1)^{i+1}$};
\end{tikzpicture}
\]
\end{enumerate}

 For \( n \in \mathbb{N} \), set
\[ \begin{aligned}
    \calU_n &= \{ v \in \mathcal{V}_n \mid v \text{ is the source of an arrow in } \calM \},\\
    \calD_n &= \{ v \in \mathcal{V}_n \mid v \text{ is the target of an arrow in } \calM \},\\
    \mathcal{V}_n^{\calM} &= \mathcal{V}_n \setminus ( \calU_n \cup \calD_n ).
\end{aligned}\]

For a path \( p \) in \(\overline{Q}_B^{\calM}\), let \( \varphi_p^{\calM} \) be the product of the weights of all the arrows along \( p \).

A path in \(\overline{Q}_B^{\calM}\) is called zigzag if its arrows alternate between thick and dotted arrows. 
For two vertices \( w \in \mathcal{V}_n \) and \( w' \in \mathcal{V}_m \), let \( \calP^{\calM}(w, w') \) be the set of all zigzag paths from \( w \) to \( w' \) in \(\overline{Q}_B^{\calM}\). If such a path exists, \(m\) must be $n$, $n-1$, or $n+1$. For two vertices \(w, w'\) in \(\mathcal{V}_n\), let \(\calP_1^{\calM}(w, w')\) be the set of all zigzag paths from \(w\) to \(w'\) in \(\overline{Q}_B^{\calM}\) starting with a dotted arrow.

By \cite[Theorem 4.3 ]{CLZ24}, \(\mathcal{V}_n^{\calM} =\calW^{(n-1)}\). Together with \cite[Theorem 3.3]{CLZ24}, this yields a new complex \(B_*^{\calM}=(B^{\calM}_*, d^{\calM})\) from \(B_*\), defined as follows: for \(n\ge 0\),
\[B_n^{\calM} = A \otimes_E K\calW^{(n-1)} \otimes_E A\cong \bigoplus_{(w_1,w_2,\dots,w_n)\in \calW^{(n-1)}} A^e \cdot (w_1,w_2,\cdots,w_n).\] 
The augmentation $d_0^{\calM} : B_0^{\calM}= A \otimes_E A \to A$ is given by \( d_0(a_0\otimes_E a_1)=a_0a_1\), and for \(n\ge 1\), the differential
$$d_n^{\calM} : B_n^{\calM} \to B_{n-1}^{\calM}$$
is given by
\begin{equation}
 d_n^{\calM}=\sum_{\substack{p \in \calP^{\calM}(w,w') \\[2pt] w \in \calW^{(n-1)},\; w' \in \calW^{(n-2)}}} \varphi_p^{\calM}.
\end{equation}
The complex \(B_*^{\calM}\), called the two-sided Anick resolution of \(A\), is a projective \(A^e\)-resolution of \(A\) homotopy equivalent to \(B_*\).

%---------------------------------------------------------------------------------------%
\subsection{Comparison morphisms and weak self-homotopies.}\mbox{}

Let $\Gamma$ be a $K$-algebra.  Let \(f: M \to N\) be a homomorphism of \(\Gamma\)-modules. Given projective resolutions 
$P_*\to M\to 0$ and $Q_*\to N\to 0$, by the Comparison Theorem, there exists a chain map \(f_*: P_*  \to Q_*\) lifting \(f\), unique up to homotopy. This chain map is called comparison morphism. %In this section, we first recall weak self-homotopy and use it, following \cite[Section 2]{IIVZ15}, to construct comparison morphisms between projective resolutions. 
\begin{definition}  [{\cite[Section 1.3]{BZZ09}}]
   Let
\[
\cdots \to Q_n \xrightarrow{d_n^Q} Q_{n-1} \xrightarrow{d_{n-1}^Q} \cdots \xrightarrow{d_2^Q} Q_1 \xrightarrow{d_1^Q} Q_0 \xrightarrow{d_0^Q} N \to 0
\] 
be a complex of \(\Gamma\)-modules. A weak self-homotopy on the complex \(Q_*\) consists of \(K\)-linear maps \(t_n:Q_n\to Q_{n+1}\) for \(n\ge 0\) and \(t_{-1}: N \to Q_0\) such that \(\id_{Q_n}=t_{n-1}\circ d_n^Q+d_{n+1}^Q\circ t_n\) for \(n\ge 0\) and \(\id_N=d_0^Q\circ t_{-1}\).
\end{definition}

Let \(f: M\to N\) be a homomorphism of \(\Gamma\)-modules. Let 
\[
\cdots \to P_n \xrightarrow{d_n^P} P_{n-1} \xrightarrow{d_{n-1}^P} \cdots \xrightarrow{d_2^P} P_1 \xrightarrow{d_1^P} P_0 \xrightarrow{d_0^P} M \to 0
\] 
be a projective \(\Gamma\)-resolution of \(M\). Then for \(n\ge 0\), there are sets \(\{e_{n,i}\}_{i \in X_n} \subset P_n\) and \(\{f_{n,i}\}_{i \in X_n} \subset \Hom_{\Gamma}(P_n, \Gamma)\) such that \(x=\sum_i f_{n,i}(x)e_{n,i}\) for \(x\in P_n\). Suppose that $Q_* \to N$
is a projective \(\Gamma\)-resolution of \(N\) equipped with a weak self-homotopy \(\{t_n\}_{n\ge -1}\) (with the convention \(Q_{-1}=N\)).

We now construct a chain map \(f_*:P_* \to Q_*\) lifting \(f\) by the method of \cite{IIVZ15}. Set \(f_{-1}=f\). Define \(f_0\) on the generators \(e_{0,i}\) of \(P_0\) by
\[f_0(e_{0,i})=t_{-1}\circ f_{-1}\circ d_0^P(e_{0,i}).\]
Then \(d_0^Q\circ f_0(e_{0,i})=f_{-1}\circ d_0^P(e_{0,i})\). For any \(x \in P_0\), define \(f_0(x)=\sum_i f_{0,i}(x)f_0(e_{0,i})\), which gives a well-defined \(\Gamma\)-module homomorphism \(f_0:P_0 \to Q_0\) satisfying \(d_0^Q\circ f_0=f_{-1}\circ d_0^P\).

Assume inductively that the \(\Gamma\)-module homomorphisms \(f_0,\cdots, f_{n-1}\) have been constructed and satisfy \(d_i^Q\circ f_i=f_{i-1}\circ d_i^P\) for \(1 \le i \le n-1\). Define \(f_n\) on the generators \(e_{n,i}\) of \(P_n\) by 
\[f_n(e_{n,i})=t_{n-1}\circ f_{n-1}\circ d_n^P(e_{n,i}).\]
Then \(d_n^Q\circ f_n(e_{n,i})=f_{n-1}\circ d_n^P(e_{n,i})\). For any \(x \in P_n\), define \(f_n(x)=\sum_i f_{n,i}(x)f_n(e_{n,i})\), which gives a well-defined \(\Gamma\)-module homomorphism \(f_n: P_n\to Q_n\), satisfying \(d_n^Q\circ f_n=f_{n-1}\circ d_n^P\). This inductive construction yields the following proposition.
\begin{prop}[{\cite[Proposition 2.2]{IIVZ15}}]
    The map \(f_*:P_* \to Q_*\) defined above is a chain map lifting $f: M\to N$.
\end{prop}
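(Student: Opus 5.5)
The plan is to prove the proposition by induction on $n$. At each stage we verify two things: that $f_n$ is a well-defined $\Gamma$-module homomorphism, and that it satisfies $d_n^Q\circ f_n=f_{n-1}\circ d_n^P$ (for $n=0$, the identity $d_0^Q\circ f_0=f\circ d_0^P$). Together these conditions say exactly that $f_*$ is a chain map lifting $f$.

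For well-definedness, we use the dual-basis data $\{e_{n,i}\},\{f_{n,i}\}$ coming from projectivity of $P_n$. We define $f_n(x)=\sum_i f_{n,i}(x)\,y_i$, where $y_i=f_n(e_{n,i})\in Q_n$. Each term $x\mapsto f_{n,i}(x)y_i$ is $\Gamma$-linear because $f_{n,i}$ is, so $f_n$ is $\Gamma$-linear. One caveat: the formula should be read with the sum finite for each $x$, as in the dual basis lemma.

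The key computation is the commutation relation on generators. For $n=0$, apply $\id_N=d_0^Q\circ t_{-1}$ to get
\[
d_0^Q f_0(e_{0,i})=f\,d_0^P(e_{0,i}).
\]
For $n\ge 1$, suppose the relation holds at level $n-1$. Set $z=f_{n-1}d_n^P(e_{n,i})$ and use $\id_{Q_{n-1}}=t_{n-2}d_{n-1}^Q+d_n^Qt_{n-1}$, so that $d_n^Q t_{n-1}(z)=z-t_{n-2}d_{n-1}^Q(z)$. By the inductive hypothesis,
\[
d_{n-1}^Q(z)=f_{n-2}\,d_{n-1}^Pd_n^P(e_{n,i})=0.
\]
Hence $d_n^Q f_n(e_{n,i})=f_{n-1}d_n^P(e_{n,i})$. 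This is the only place where the weak self-homotopy identity and $d^2=0$ enter, and it is the heart of the argument.

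The main obstacle is extending this identity from the generators to all of $P_n$, because $t_{n-1}$ is only $K$-linear. The point is that we never apply $t$ to arbitrary elements. Both $d_n^Q\circ f_n$ and $f_{n-1}\circ d_n^P$ are $\Gamma$-linear, and both sides agree on each $e_{n,i}$. Writing $x=\sum_i f_{n,i}(x)e_{n,i}$, we get
\[
d_n^Qf_n(x)=\sum_i f_{n,i}(x)\,d_n^Qf_n(e_{n,i})=\sum_i f_{n,i}(x)\,f_{n-1}d_n^P(e_{n,i})=f_{n-1}d_n^P(x).
\]
For the first equality we also need $f_n(e_{n,i})$, computed through the dual-basis formula, to be consistent with the prescribed value. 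In fact this consistency is not needed: we may simply take the $\Gamma$-linear map defined by the dual-basis formula, and the displayed computation shows it commutes with the differentials. This completes the induction.
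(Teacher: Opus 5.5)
Your proof is correct and follows the same inductive construction the paper sketches just before the proposition: define $f_n$ on the dual-basis generators by $t_{n-1}f_{n-1}d_n^P$ and extend $\Gamma$-linearly. You also supply the step the paper leaves implicit, namely that $d_n^Q t_{n-1}(z)=z$ because $d_{n-1}^Q(z)=f_{n-2}d_{n-1}^Pd_n^P(e_{n,i})=0$, and you rightly observe that the dual-basis extension need not reproduce the prescribed values on the $e_{n,i}$; in your final display, write $y_i$ in place of $f_n(e_{n,i})$ so the computation reads exactly as you intend.
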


Note that when $\Gamma=A^e$ for a $K$-algebra $A$ and $Q_*$ is a projective bimodule resolution of $A$, then as $A$ is projective as left or right $A$-module, a weak self-homotopy on $Q_*$ may be chosen to  consist of homomorphisms of left or right $A$-modules.

%---------------------------------------------------------------------------------------%

\section{The two-sided Anick resolution}\label{sec: Anick resolution}
 Recall the quiver with relations of the Xu--Snashall algebra (\cite[Example 4.1]{Sna09}):
 Let \(K\) be a field and let \(Q\) be the following quiver:
\[
\begin{tikzcd}
    1 \arrow[loop above, out =120, in =60, looseness=4,"a"] \arrow[loop below,out =300, in =240, looseness=5,"b"] \arrow[r,"c"] & 2
\end{tikzcd}
\]
Then the Xu--Snashall algebra is given by  \(A=KQ/I\), where \(I\) is the ideal of the path algebra \(KQ\) generated by the relations \(\{a^2, b^2, ab-ba, ac\}\). 

%Then \(A\) is an associative \(K\)-algebra with unit \(1_A=e_1+e_2\).

Consider the  left length-lexicographic order \(<\) on the multiplicative basis \(\calB = Q_{\ge 0}\) of \(KQ\), uniquely determined by imposing \(e_2 < e_1 < c < b < a\). 
Set $f_1 := a^2$, $f_2 := b^2$, $f_3 := ab - ba$, $f_4 := ac$  and \( \mathscr{G} := \{f_1, f_2, f_3, f_4\}\). The tips of \(f_i\) are as follows:
\[
\Tip(f_1) = a^2,\quad 
\Tip(f_2) = b^2,\quad 
\Tip(f_3) = ab,\quad 
\Tip(f_4) = ac.
\]
Note that \(\mathscr{G}\) is tip-reduced and consists of uniform elements. 

We verify that every overlap relation \(o(f_i, f_j, p, q)\) with \(1 \le i,j \le 4\) reduces to \(0\); these overlap relations are displayed as follows:
\[
\begin{aligned}
o(f_1, f_1, a, a) &= a^2a - aa^2 = 0,\\
%o(f_1, f_1, e_1, e_1) &= a^2e_1 - e_1a^2 = 0,\\
o(f_1, f_3, b, a) &= a^2b - a(ab - ba) = aba = e_1f_3a + bf_1e_1 ,\\
o(f_1, f_4, c, a) &= a^2c - a(ac) = 0,\\
o(f_2, f_2, b, b) &= b^2b - bb^2 = 0,\\
%o(f_2, f_2, e_1, e_1) &= b^2e_1 - e_1b^2 = 0,\\
o(f_3, f_2, b, a) &= (ab - ba)b - ab^2 = -bab =-bf_3e_1 -e_1f_2a.
%o(f_3, f_3, e_1, e_1) &= (ab - ba)e_1 - e_1(ab - ba) = 0,\\
%o(f_4, f_4, e_2, e_1) &= ace_2 - e_1ac=0.
\end{aligned}
\]
%Every overlap relation reduces to 0. 
Hence, by Theorem \ref{thm: GS basis}, \(\mathscr{G}\) is a Gröbner-Shirshov system for \(I\) with respect to \(<\). Consequently,
\[  \Nontip(I) =\{e_1,e_2, a, b, c, ba, bc\}.\]
We have thus proved the following result:
\begin{lem}Under the above setup, 
\(\mathscr{G}\) is a reduced Gröbner-Shirshov system for \(I\) with respect to \(<\) and 
$\Nontip(I) =\{e_1, e_2, a, b, c, ba, bc\}$ is a linear basis of the Xu--Snashall algebra $A$. 
\end{lem}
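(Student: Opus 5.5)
The lemma has three parts: $\mathscr{G}$ is a Gröbner--Shirshov system for $I$, it is reduced in the sense of Definition \ref{regs}, and $\Nontip(I)$ is the stated set. I would prove them in that order.

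\textbf{Step 1: $\mathscr{G}$ is a Gröbner--Shirshov system.} I apply Theorem \ref{thm: GS basis}. Its hypotheses are that $\mathscr{G}$ consists of uniform, tip reduced elements and that every overlap relation reduces to $0$.

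Uniformity is immediate. $a^2$, $b^2$ and $ac$ are single paths. $ab-ba$ is a combination of two paths from vertex $1$ to vertex $1$, so left or right multiplication by any path kills both terms or neither.

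Tip reducedness holds because no two of the words $a^2, b^2, ab, ac$ divide one another.

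For the overlaps, I first enumerate all pairs of words among $a^2, b^2, ab, ac$ where a proper suffix of one equals a proper prefix of the other. A suffix of length $1$ must be $a$ or $b$ (never $c$ followed by something), and the next word must start with that letter. This gives exactly the overlaps
\[
(a^2,a^2),\ (a^2,ab),\ (a^2,ac),\ (b^2,b^2),\ (ab,b^2).
\]
No word begins with $ba$ or with $b$ followed by $c$, and $ac$ ends in $c$, so nothing else arises.

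Each overlap is then reduced using the displayed identities. Both $aba$ and $-bab$ are written as combinations of elements of $\mathscr{G}$ whose tips are at most $\Tip(v)$; for $aba$ this requires checking that $a\cdot ba < aba$ and $ba^2 < aba$ in the left length-lexicographic order. The remainder in each case is $0$.

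I expect this enumeration of overlaps to be the only real obstacle: I must make sure the list is complete, including the self-overlaps of $a^2$ and $b^2$, and confirm the tip inequalities required by the definition of reduction.

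\textbf{Step 2: $\mathscr{G}$ is reduced.} Every $\CTip$ equals $1$. The non-tip parts are $0$, except $-ba$ for $f_3$, and $ba$ contains none of $a^2, b^2, ab, ac$ as a subword. Hence $ba\in\Nontip(I)$ by Step 1.

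\textbf{Step 3: $\Nontip(I)$.} Since $\langle\Tip(I)\rangle=\langle\Tip(\mathscr{G})\rangle$, the set $\Nontip(I)$ consists of exactly the paths avoiding the subwords $a^2, b^2, ab, ac$. I list these paths:
\begin{itemize}
\item an $a$ can only be followed by nothing, since $aa$, $ab$ and $ac$ are all forbidden;
\item a $b$ can be followed by $a$ or $c$;
\item nothing follows $c$, because vertex $2$ is a sink.
\end{itemize}
Hence the allowed paths are $e_1, e_2, a, b, c, ba, bc$. By \cite[Theorem 2.1]{Gre99}, $\Nontip(I)$ is a basis of $A$.
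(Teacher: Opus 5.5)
Your proposal is correct and is essentially the paper's argument: Green's termination theorem after checking uniformity, tip-reducedness and exactly the same five overlaps $(a^2,a^2),(a^2,ab),(a^2,ac),(b^2,b^2),(ab,b^2)$, with the same reductions $aba=f_3a+bf_1$ and $-bab=-bf_3-f_2a$, followed by reading off the paths that avoid $a^2,b^2,ab,ac$; beyond the paper, you also make explicit the reducedness check and the completeness of the overlap list. One cosmetic slip: $a\cdot ba$ is $aba$ itself, so the inequality you need there is the non-strict $\Tip(f_3a)=aba\le aba$ (which is all the reduction condition asks), together with $\Tip(bf_1)=ba^2<aba$.
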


Set \(\calW=\Tip(\mathscr{G})=\{a^2, b^2, ab, ac\}\).  The Ufnarovski\u{\i} graph \(Q_\calW=(V,E)\) of \(Q\) with respect to \(\calW\) is given by 
\[V=\{e_1, e_2, a, b, c\} \text{ and }
E= \{e_1 \to a, e_1 \to b, e_1 \to c, a\to a, a\to b, a\to c, b\to b\},\]
and can be drawn as follows:
\[
\begin{tikzcd}
&& e_1 \arrow[ld]  \arrow[d]  \arrow[rd] & e_2\\
 &   b \arrow[loop below, out =300, in =240, looseness=5] & a \arrow[loop below,out =300, in =240, looseness=5] \arrow[l]\arrow[r] & c
\end{tikzcd}
\]

By construction, the Anick  chains of \(A\) are as follows:
\[ 
    \calW^{(-1)}= \{e_1, e_2\}, \; \calW^{(0)}= \{(a), (b), (c)\},\]
    and for $i\ge 1$, 
    $$\calW^{(i)}=\{(\underbrace{a, \cdots,a}_{i+1}), (\underbrace{a, \cdots,a}_{i+1-r}, \underbrace{b, \cdots, b}_{r}), 1 \le r \le i , (\underbrace{b, \cdots,b}_{i+1}), (\underbrace{a, \cdots,a}_{i}, c)\}.$$

For convenience, we introduce the following notations:
\[\begin{gathered}
    s_0^0 = e_1, s_1^0 =e_2, 
    s_0^1= (a), s_1^1= (b), s_2^1 = (c),\end{gathered}\]
    and for $n\ge 2$, 
   \[\begin{gathered}s_0^n=(\underbrace{a, \cdots,a}_{n}), s_r^n=(\underbrace{a, \cdots,a}_{n-r}, \underbrace{b, \cdots, b}_{r}), 1 \le r \le n, s_{n+1}^n= (\underbrace{a, \cdots,a}_{n-1}, c).\end{gathered}\]
   
Let \(E=Ke_1\oplus Ke_2 \). The components of the two-sided Anick resolution \(B_*^{\calM}\) of the algebra \(A=KQ/I\) are given by
\[B_n^{\calM}  = A \otimes_E K\calW^{(n-1)} \otimes_E A \cong \bigoplus_{i=0}^{n+1} A^e \cdot s_i^n.\] 
where each direct summand is a projective \(A^e\)-module   isomorphic to  some \(Ae_i \otimes_K e_jA\) for   \(i,j\in \{1,2\}\). 
For simplicity, write \(\otimes\) for \(\otimes_K\) whenever there is no confusion. 

Now we compute the differential \(d^{\calM}\) by giving the images of generators under the differential, which can be summarized  as follows:
\begin{theorem}
\begin{enumerate}
    \item When \(n=0\), $$d_0^{\calM}: B_0^{\calM}=A^e \cdot e_1\oplus A^e \cdot e_2\to A$$
is given by $$d_0^{\calM}(e_1) = e_1, d_0^{\calM}(e_2) = e_2.$$
  \item When \(n=1\),  $$d_1^{\calM}: B_1^{\calM}=A^e \cdot (a)\oplus A^e \cdot (b)\oplus A^e \cdot (c)\to B_0^{\calM}=A^e \cdot e_1\oplus A^e \cdot e_2$$
is given by  \[ \begin{aligned}
    &d_1^{\calM}((a))= (a \otimes 1- 1\otimes a)  e_1, \\
    &d_1^{\calM}((b))= (b \otimes 1- 1\otimes b) e_1, \\
    &d_1^{\calM}((c))= (c \otimes 1)  e_2 - (1\otimes c)  e_1.
\end{aligned}
\]
 
\item When \(n\ge 2\), $$d_n^{\calM}: B_n^{\calM}  = \bigoplus_{i=0}^{n+1} A^e \cdot s_i^n \to  B_{n-1}^{\calM}  = \bigoplus_{i=0}^{n} A^e \cdot s_i^{n-1} $$ is given by 
  \[\begin{aligned}
    &d_n^{\calM}(s_0^n)= (a \otimes 1 + (-1)^n1\otimes a)  s_0^{n-1}, \\
    &d_n^{\calM}(s_r^n)= (a \otimes 1 + (-1)^{n+r}1\otimes a) s_r^{n-1}+((-1)^n 1\otimes b+(-1)^{n-r}b \otimes 1)s_{r-1}^{n-1}, 1\le r \le n-1, \\
   &d_n^{\calM}(s_n^n)= (b \otimes 1+ (-1)^n 1\otimes b) s_{n-1}^{n-1}, \\
   &d_n^{\calM}(s_{n+1}^n)= (a \otimes 1) s_n^{n-1} + (-1)^n (1\otimes c)  s_0^{n-1}.
\end{aligned}\]
\end{enumerate}
\end{theorem}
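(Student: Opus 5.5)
The plan is to compute $d_n^{\calM}$ directly from its definition as the sum of weights $\varphi_p^{\calM}$ over zigzag paths $p\in\calP^{\calM}(w,w')$, with $w\in\calW^{(n-1)}$ and $w'\in\calW^{(n-2)}$. Since the quiver is small and $\Nontip(I)=\{e_1,e_2,a,b,c,ba,bc\}$, the possible zigzag paths can be enumerated by hand.

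Cases $n=0,1$ are immediate. For $n=1$, the vertex $(x)$ with $x$ an arrow has only the two outgoing thick arrows $d_1^0$ and $d_1^1$, of weights $x\otimes 1$ and $-1\otimes x$. Their targets are the vertices $s(x)$ and $t(x)$, which already lie in $\calW^{(-1)}$, so no dotted arrow can follow. This gives the stated formulas, including $(c\otimes 1)e_2-(1\otimes c)e_1$ from $s(c)=e_1$, $t(c)=e_2$.

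For $n\ge2$, take a chain $(w_1,\dots,w_n)$. All entries are arrows, and consecutive products $w_iw_{i+1}$ are tips, hence zero in $A$, or equal $ab\equiv ba$. First consider the outer faces $d_n^0$ and $d_n^n$.
\begin{itemize}
\item $d_n^0$ deletes $w_1$, with weight $w_1\otimes 1$. The result is again an Anick chain or not, and when it is not we must follow a dotted arrow.
\item $d_n^n$ deletes $w_n$, with weight $(-1)^n\,1\otimes w_n$, and the result is always an Anick chain.
\end{itemize}
For $s_r^n$ with $1\le r\le n-1$, dropping $a$ from the front gives $s_r^{n-1}$ and dropping $b$ from the back gives $s_{r-1}^{n-1}$. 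The remaining contributions come from inner faces $d_n^i$.
\begin{itemize}
\item If $w_iw_{i+1}\in\{a^2,b^2,ac\}$, the product is $0$ in $A$ and there is no arrow.
\item At the unique junction $w_{n-r}w_{n-r+1}=ab$, we get $ab=ba\in\Nontip(I)$, giving the vertex $(a,\dots,a,ba,b,\dots,b)$ with weight $(-1)^{n-r}$.
\end{itemize}

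The main step is to follow the zigzag paths out of this non-chain vertex. It lies in some $V_{w,i}$ and is matched by a dotted arrow $d^{-j}$ that splits the entry $ba$ into $(b,a)$ or merges an earlier pair. I will show that the alternating sequence of dotted splittings and thick moves slides the $b$ leftward through the $a$'s. At each step the dotted arrow contributes weight $\pm1$ and the thick arrow $d^j$ rewrites $ab\to ba$. Finally the $b$ reaches the front, where the thick arrow $d^0$ removes it with weight $b\otimes1$ and lands on $(a,\dots,a,b,\dots,b)=s_{r-1}^{n-1}$.

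Checking that the signs telescope to $(-1)^{n-r}$ will be the main obstacle. It requires identifying exactly which arrows lie in $\calM$ for vertices such as $(a,\dots,a,b,a,\dots,a,b,\dots)$, using the definition through $V_{w,i-1}$ and $V_{w,i-2}$. It also requires verifying that every other branch dies: any branch producing $a^2$, $b^2$ or $ac$ gives $0$, and any branch exiting with weight $1\otimes -$ on a non-chain vertex is not allowed to end there. I would first prove by induction on the position of the $b$ that from $(a^{k},ba,a^{m},\dots)$ the only surviving zigzag continues to $(a^{k-1},ba,a^{m+1},\dots)$ with weight $-1$ (product of the dotted weight and $(-1)^j$).

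The same analysis gives the remaining three formulas.
\begin{itemize}
\item For $s_0^n$, no inner face survives, so only $a\otimes1$ and $(-1)^n\,1\otimes a$ contribute.
\item For $s_n^n$, similarly only $b\otimes1$ and $(-1)^n\,1\otimes b$ contribute.
\item For $s_{n+1}^n$, deleting $a$ at the front gives $(a\otimes1)\,s_n^{n-1}$, and deleting $c$ at the back gives $(-1)^n(1\otimes c)\,s_0^{n-1}$. The inner faces vanish since $a^2=ac=0$.
\end{itemize}
The sign in the $s_r^n$ coefficient $(-1)^{n+r}\,1\otimes a$ is then a consistency check. It can be confirmed by verifying $d_{n-1}^{\calM}d_n^{\calM}=0$, which holds automatically by \cite[Theorem 3.3]{CLZ24} and can be used to fix any sign ambiguity.
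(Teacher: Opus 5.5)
\textbf{There is a genuine gap in the $s_r^n$ computation.} Your method (summing weights of zigzag paths in $\overline{Q}_B^{\calM}$) is the paper's. Your handling of $n\le 1$, of $s_0^n$, $s_n^n$, $s_{n+1}^n$, and of the leftward migration of $b$ that yields $(-1)^{n-r}(b\otimes 1)s_{r-1}^{n-1}$ is fine.

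The gap is your claim that this migration is the \emph{only} surviving zigzag. Write $a^k$ for $k$ consecutive entries $a$. After the first thick/dotted pair you are at $(a^{n-r-1},b,a,b^{r-1})$ with accumulated weight $-1$, and the paths branch there. So your inductive claim about $(a^k,ba,a^m,\dots)$ is already false for $m=0$. Besides merging the $a$ in front of the $b$, when $r\ge 2$ the lone $a$ can merge with the $b$ behind it:
\begin{itemize}
\item the thick arrow to $(a^{n-r-1},b,ba,b^{r-2})$ has weight $(-1)^{n-r+1}$;
\item the dotted split to $(a^{n-r-1},b,b,a,b^{r-2})$ has weight $(-1)^{n-r+2}$, so the pair has net weight $-1$.
\end{itemize}
Iterating this moves the $a$ rightward through all the $b$'s to $(a^{n-r-1},b^r,a)$. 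There the outer face $d_n^n$ deletes it with weight $(-1)^n\,1\otimes a$ and lands on the chain $s_r^{n-1}$. The total weight $(-1)\cdot(-1)^{r-1}\cdot(-1)^n=(-1)^{n+r}$ is exactly the coefficient you could not locate. For $r=1$ this branch is simply the immediate exit by $d_n^n$ from $(a^{n-2},b,a)$. It ends on a chain, so it is a legitimate termination and must not be discarded.

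Once there are two branches, you must also show they never recombine; otherwise mixed paths could alter either coefficient. The paper gets this from a monotonicity observation. Along all relevant short paths, the number of $a$'s after each $b$ never decreases and the number before each $b$ never increases. Hence each of the two target chains is reachable only through its own branch, and uniquely within it.

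\textbf{Why $d^2=0$ does not repair this.} Appealing to $d_{n-1}^{\calM}d_n^{\calM}=0$ cannot supply the missing term. Since $d^{\calM}$ is defined as the sum over zigzag paths, if your enumeration finds no path contributing $(1\otimes a)s_r^{n-1}$, then $d^2=0$ only tells you the enumeration is wrong. To turn $d^2=0$ into a derivation, you would first need an a priori bound on the shape of $d_n^{\calM}(s_r^n)$. For instance, every arrow weight preserves the numbers of $a$'s and $b$'s, which forces it into $\Span_K\{a\otimes 1,1\otimes a\}\,s_r^{n-1}+\Span_K\{b\otimes 1,1\otimes b\}\,s_{r-1}^{n-1}$. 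You would also need independently verified values of the other coefficients, and of $d_{n-1}^{\calM}$ by induction. None of this is in your proposal, and the direct path count above is shorter anyway.
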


The rest of this section is devoted to the proof of this result. 

The computations of $d_0^{\calM}$ and $d_1^{\calM}$ are easy.

When \(n=2\), the generators $(a,a)$, $(b,b)$, and $(a,c)$ each have two outgoing thick arrows with targets in $\calW^{(0)}$. 
The generator $(a,b)$ has three outgoing thick arrows; one of the thick arrows is
\[
\begin{tikzcd}
 (a,b) \arrow[r,"-1"]  & (ba) 
\end{tikzcd}
\]
with weight $-1$. The vertex $(ba)$ has one outgoing dotted arrow to $(b,a)$ with weight $1$. The vertex $(b,a)$ has two outgoing thick arrows with targets in $\calW^{(0)}$. Consequently, 
$$d_2^{\calM}: B_2^{\calM}=A^e \cdot (a, a)\oplus A^e \cdot (a, b)\oplus A^e \cdot (b, b)\oplus  A^e \cdot (a, c)\to B_1^{\calM}=A^e \cdot (a)\oplus A^e \cdot (b)\oplus A^e \cdot (c)$$
is given by
\[ \begin{aligned}
    &d_2^{\calM}((a,a))= (a \otimes 1 + 1\otimes a)  (a),\\
    &d_2^{\calM}((a,b))= (1\otimes b-b \otimes 1 ) (a) + (a \otimes 1 - 1 \otimes a) (b), \\
    &d_2^{\calM}((b,b))= (b \otimes 1+ 1\otimes b) (b), \\
    &d_2^{\calM}((a,c))= (a \otimes 1) (c) + (1\otimes c)  (a).
\end{aligned}\]

When \(n\ge 3\), the computation of 
$$d_n^{\calM}: B_n^{\calM}  = \bigoplus_{i=0}^{n+1} A^e \cdot s_i^n \to  B_{n-1}^{\calM}  = \bigoplus_{i=0}^{n} A^e \cdot s_i^{n-1} $$ is considerably harder. Each of the generators \((\underbrace{a, \cdots,a}_{n})\), \((\underbrace{b, \cdots, b}_{n})\), and \((\underbrace{a, \cdots,a}_{n-1},c)\) has exactly two outgoing thick arrows, both with targets in \(\calW^{(n-2)}\). Thus, the first easy formulas are 
\[ \begin{aligned}
    &d_n^{\calM}((\underbrace{a, \cdots,a}_{n}))= (a \otimes 1 + (-1)^n1\otimes a)  (\underbrace{a, \cdots,a}_{n-1}),\\
    &d_n^{\calM}((\underbrace{b, \cdots, b}_{n}))= (b \otimes 1+ (-1)^n 1\otimes b)(\underbrace{b, \cdots, b}_{n-1}), \\
    &d_n^{\calM}((\underbrace{a, \cdots,a}_{n-1},c))= (a \otimes 1) (\underbrace{a, \cdots,a}_{n-2},c) + (-1)^n (1\otimes c)  (\underbrace{a, \cdots,a}_{n-1}).
\end{aligned}\]

Then we compute \(d_n^{\calM}((\underbrace{a, \cdots,a}_{n-1},b))\), which amounts to finding all zigzag paths from $(\underbrace{a, \cdots,a}_{n-1},b)$ to  $\calW^{(n-2)}$ in the new weighted quiver $\overline{Q}_B^{\calM}$.
There are exactly two zigzag paths from \((\underbrace{a, \cdots,a}_{n-1},b)\) to \((\underbrace{a, \cdots,a}_{n-2},b)\):  one is
\[
\begin{tikzcd}
 (\underbrace{a, \cdots,a}_{n-1},b) \arrow[r,"a \otimes 1"]  & (\underbrace{a, \cdots,a}_{n-2},b) 
\end{tikzcd}
\]
with weight $a \otimes 1$, and the other is
\[\begin{tikzcd}
   & (\underbrace{a, \cdots,a}_{n-1},b) \arrow[r,"(-1)^{n-1}"] & (\underbrace{a, \cdots,a}_{n-2},ba) \arrow[ld,dashed,"(-1)^n",swap] \\
   & (\underbrace{a, \cdots,a}_{n-2},b,a) \arrow[r,"(-1)^n1\otimes a"]  & (\underbrace{a, \cdots,a}_{n-2},b) 
\end{tikzcd}\]
with weight $(-1)^{n+1}1 \otimes a$.

There are exactly two zigzag paths from \((\underbrace{a, \cdots,a}_{n-1},b)\) to \((\underbrace{a, \cdots,a}_{n-1})\): one is
\[\begin{tikzcd}
   & (\underbrace{a, \cdots,a}_{n-1},b) \arrow[r,"(-1)^{n-1}"] & (\underbrace{a, \cdots,a}_{n-2},ba) \arrow[ld,dashed,"(-1)^n",swap]\\
   & (\underbrace{a, \cdots,a}_{n-2},b,a) \arrow[r,"(-1)^{n-2}"]  & (\underbrace{a, \cdots,a}_{n-3},ba,a) \arrow[ld,dashed,"(-1)^{n-1}",swap]\\
   & (\underbrace{a, \cdots,a}_{n-3},b,a,a) \arrow[r,"\cdots"]  & (\cdots) \arrow[ld,dashed,"\cdots",swap]\\
   &(\cdots) \arrow[r,"\cdots"] & (ba,\underbrace{a, \cdots,a}_{n-2}) \arrow[ld,dashed,"(-1)^{2}",swap]\\
   &(b,\underbrace{a, \cdots,a}_{n-1}) \arrow[r,"b \otimes 1"] & (\underbrace{a, \cdots,a}_{n-1})
\end{tikzcd}\]
with weight $(-1)^{n-1} b \otimes 1$, and the other is
\[
\begin{tikzcd}
 (\underbrace{a, \cdots,a}_{n-1},b) \arrow[r,"(-1)^n 1 \otimes b"]  & (\underbrace{a, \cdots,a}_{n-1}) 
\end{tikzcd}
\]
with weight $(-1)^n 1 \otimes b$.
Moreover, there is no zigzag path from \((\underbrace{a, \cdots,a}_{n-1},b)\) to any other vertex in \(\calW^{(n-2)}\). Hence,
\[d_n^{\calM}((\underbrace{a, \cdots,a}_{n-1},b) )=(a\otimes 1+(-1)^{n+1}1\otimes a)(\underbrace{a, \cdots,a}_{n-2},b)+((-1)^n 1\otimes b+(-1)^{n-1}b\otimes1)(\underbrace{a, \cdots,a}_{n-1}).\] 
%that is, 
%\[d_n^{\calM}(s_1^n)=(a\otimes 1+(-1)^{n+1}1\otimes a)s_1^{n-1}+((-1)^n 1\otimes b+(-1)^{n-1}b\otimes1)s_0^{n-1}.\]

Similarly, we can compute
\[d_n^{\calM}((a,\underbrace{b, \cdots,b}_{n-1}))=(a\otimes 1-1\otimes a)(\underbrace{b, \cdots,b}_{n-1})+((-1)^n 1\otimes b-b\otimes1)(a, \underbrace{b, \cdots,b}_{n-2}).\]

All cases with \(n=3\) have been discussed above. It remains to compute the remaining cases with \(n\ge 4\), namely, 
$$d_n^{\calM}((\underbrace{a, \cdots,a}_{n-r},\underbrace{b, \cdots,b}_r))$$
for \(2 \le r \le n-2\), which amounts to finding all zigzag paths from $(\underbrace{a, \cdots,a}_{n-r},\underbrace{b, \cdots,b}_{r})$ to $W^{(n-2)}$ in the new weighted quiver $\overline{Q}_B^{\calM}$.
 The only outgoing arrows from \((\underbrace{a, \cdots,a}_{n-r},\underbrace{b, \cdots,b}_{r})\) are the following three thick arrows: 
\[   \begin{tikzcd}
 (\underbrace{a, \cdots,a}_{n-r},\underbrace{b, \cdots,b}_{r}) \arrow[r,"a \otimes 1"]  & (\underbrace{a, \cdots,a}_{n-r-1}, \underbrace{b, \cdots,b}_{r}) \in \calW^{(n-2)}
    \end{tikzcd}\]
\[\begin{tikzcd}
 (\underbrace{a, \cdots,a}_{n-r},\underbrace{b, \cdots,b}_{r}) \arrow[r,"(-1)^n 1 \otimes b"]  & (\underbrace{a, \cdots,a}_{n-r}, \underbrace{b, \cdots,b}_{r-1}) \in \calW^{(n-2)} 
    \end{tikzcd}\]
\begin{equation}\label{arrow1}
    \begin{tikzcd}
 (\underbrace{a, \cdots,a}_{n-r},\underbrace{b, \cdots,b}_{r}) \arrow[r,"(-1)^{n-r}"]  & (\underbrace{a, \cdots,a}_{n-r-1}, ba, \underbrace{b, \cdots,b}_{r-1}) \in \calD_{n-1}
    \end{tikzcd}
\end{equation}
Only the target of the arrow (\ref{arrow1}) does not belong to \(\calW^{(n-2)}\), and it is the source of exactly one dotted arrow in \(\overline{Q}_B^{\calM}\):
\[\begin{tikzcd}
 & (\underbrace{a, \cdots,a}_{n-r-1}, ba, \underbrace{b, \cdots,b}_{r-1}) \arrow[ld,dashed,"(-1)^{n-r+1}",swap] \\
 (\underbrace{a, \cdots,a}_{n-r-1}, b, a, \underbrace{b, \cdots,b}_{r-1})  
    \end{tikzcd}\] 
whose composition with the arrow (\ref{arrow1}) has weight $-1$. Hence, the problem reduces to finding all zigzag paths from \((\underbrace{a, \cdots,a}_{n-r-1}, b, a, \underbrace{b, \cdots,b}_{r-1})\) to $\calW^{(n-2)}$ in the new weighted quiver $\overline{Q}_B^{\calM}$. 
To handle these paths systematically, we introduce some notations.
\iffalse
We want to find a zigzag path in $\overline{Q}_B^{\calM}$ starting from the vertex \((\underbrace{a, \cdots,a}_{n-r},\underbrace{b, \cdots,b}_{r})\), passing through the arrow  (\ref{arrow1}) and ending at some vertex in \(\calW^{(n-2)}\) is equivalent to finding a zigzag path in $\overline{Q}_B^{\calM}$ from the vertex \((\underbrace{a, \cdots,a}_{n-r-1}, b, a, \underbrace{b, \cdots,b}_{r-1})\) to some vertex in \(\calW^{(n-2)}\).
\fi

Let \(\T_n\) be the set of all sequences of the form \((\underbrace{a, \cdots,a}_{m_1}, \underbrace{b, \cdots,b}_{m_2}, \underbrace{a, \cdots,a}_{m_3}, \underbrace{b, \cdots,b}_{m_4},*)\) satisfying the following conditions:
\begin{enumerate}
    \item  \(m_1 \ge 0\), $m_2 \ge 1$, $m_3 \ge 1$, $m_4 \ge 0$;
    \item $*$ is a (possibly empty) sequence of length $n-\sum_{i=1}^4m_i$ over $\{a,b\}$;
    \item the number of occurrences of each of \(a\) and \(b\) in the whole sequence is at least 2;
    \item if $m_4 \ge 1$ and $*\neq \emptyset$, then \(*\)  starts with \(a\). If $m_4 = 0$, $*=\emptyset$.
\end{enumerate}

Let \(\calS_n\) be the set of all 5-tuples \((m_1, m_2, m_3, m_4, *)\) satisfying the following conditions:
\begin{enumerate}
    \item  \(m_1 \ge 0, m_2 \ge 1, m_3 \ge 1, m_4 \ge 0, \sum_{i=1}^4m_i \le n\) and \(m_i \in \mathbb{Z}\) for \(i=1,2,3,4\);
    \item  $*$ is a (possibly empty) sequence of length $n-\sum_{i=1}^4m_i$ over $\{a,b\}$;
    \item the number of \(a\)'s in \(*\) is at least \(2-m_1-m_3\), and the number of \(b\)'s in \(*\) is at least  \(2 - m_2 -m_4\);
    \item if $m_4 \ge 1$ and $*\neq \emptyset$, then \(*\)  starts with \(a\). If $m_4 = 0$, $*=\emptyset$.
\end{enumerate} 
The sets \(\T_n\) and \(\calS_n\) are in bijection via the natural correspondence
\[(\underbrace{a, \cdots,a}_{m_1}, \underbrace{b, \cdots,b}_{m_2}, \underbrace{a, \cdots,a}_{m_3}, \underbrace{b, \cdots,b}_{m_4},*) \longleftrightarrow (m_1, m_2, m_3, m_4, *).\]
From now on, we identify \((m_1, m_2, m_3, m_4, *)\in \calS_n\) with the corresponding sequence in \(\T_n\).

A short path is defined as either a single thick arrow or a thick arrow followed by a dotted arrow.  
Since $\T_n \subset \calU_n$, all the arrows in $\overline{Q}_B^{\calM}$ starting at $\T_n$ are thick arrows. All zigzag paths from a sequence in $\T_n$ to $\calW^{(n-2)}$ can be decomposed into short paths. Then we list all short paths that start at $(\underbrace{a, \cdots,a}_{m_1}, \underbrace{b, \cdots,b}_{m_2}, \underbrace{a, \cdots,a}_{m_3}, \underbrace{b, \cdots,b}_{m_4},*)$ in $\T_n$  as follows: 

\noindent \textbf{Case 1} If \(m_1 \ge 1\),

\noindent \hspace*{2em}\textbf{Case 1.1} If \(m_3 \ge 2\), from this vertex, there is only one short path
\begin{equation} \label{path1}
    \begin{tikzcd}
   & (\underbrace{a, \cdots,a}_{m_1}, \underbrace{b, \cdots,b}_{m_2}, \underbrace{a, \cdots,a}_{m_3}, \underbrace{b, \cdots,b}_{m_4},*) \arrow[r,"(-1)^{m_1}"] & (\underbrace{a, \cdots,a}_{m_1-1}, ba, \underbrace{b, \cdots,b}_{m_2-1}, \underbrace{a, \cdots,a}_{m_3}, \underbrace{b, \cdots,b}_{m_4},*) \arrow[ld,dashed,"(-1)^{m_1+1}",swap] \\
   & (\underbrace{a, \cdots,a}_{m_1-1}, b, a, \underbrace{b, \cdots,b}_{m_2-1}, \underbrace{a, \cdots,a}_{m_3}, \underbrace{b, \cdots,b}_{m_4},*)
        \end{tikzcd}
\end{equation}
 with weight $-1$.
 
\noindent \hspace*{4em}\textbf{Case 1.1.1} If \(m_2 \ge 2\), the short path (\ref{path1}) reaches \((m'_1, m'_2, m'_3, m'_4, *') \in \T_n\), where 
\begin{equation}
    m'_1= m_1-1 \ge 0,\;  m'_2= 1,\;  m'_3=1,\;  m'_4 =m_2-1\ge 1, \text{ and } *' = (\underbrace{a, \cdots,a}_{m_3}, \underbrace{b, \cdots,b}_{m_4},*).
\end{equation}

\noindent \hspace*{4em}\textbf{Case 1.1.2} If \(m_2=1\), the short path (\ref{path1}) reaches \((m'_1, m'_2, m'_3, m'_4, *')\in \T_n\), where
\begin{equation}
    m'_1= m_1-1 \ge 0,\;  m'_2= 1,\;  m'_3=m_3+1\ge 2,\;  m'_4 =m_4\ge 0, \text{ and } *'=*.
\end{equation}

\noindent \hspace*{2em}\textbf{Case 1.2} If \(m_3=1\), \( m_4 \ge 1\), from this vertex, there are exactly two short paths. The first is  
\begin{equation} \label{path2}
    \begin{tikzcd}
   & (\underbrace{a, \cdots,a}_{m_1}, \underbrace{b, \cdots,b}_{m_2}, a, \underbrace{b, \cdots,b}_{m_4},*) \arrow[r,"(-1)^{m_1}"] & (\underbrace{a, \cdots,a}_{m_1-1}, ba, \underbrace{b, \cdots,b}_{m_2-1}, a, \underbrace{b, \cdots,b}_{m_4},*) \arrow[ld,dashed,"(-1)^{m_1+1}",swap] \\
   & (\underbrace{a, \cdots,a}_{m_1-1}, b, a, \underbrace{b, \cdots,b}_{m_2-1}, a, \underbrace{b, \cdots,b}_{m_4},*)
        \end{tikzcd}
\end{equation}
with weight $-1$. This leads to two subcases:

\noindent \hspace*{4em}\textbf{Case 1.2.1} If \(m_2\ge 2\), the short path (\ref{path2}) reaches \((m'_1, m'_2, m'_3, m'_4, *' )\in \T_n\), where 
\begin{equation}
    m'_1= m_1-1 \ge 0,\;  m'_2= 1,\;  m'_3=1,\;  m'_4 =m_2-1\ge 1, \text{ and } *'=(a, \underbrace{b, \cdots,b}_{m_4},*).
\end{equation}

\noindent \hspace*{4em}\textbf{Case 1.2.2} If \(m_2=1\), the short path (\ref{path2}) reaches \((m'_1, m'_2, m'_3, m'_4, *' )\in \T_n\), where 
\begin{equation}
    m'_1= m_1-1 \ge 0,\;  m'_2= 1,\;  m'_3=2,\;  m'_4 =m_4\ge 1, \text{ and } *'=*.
\end{equation}

The second is
\begin{equation} \label{path3}
    \begin{tikzcd}[column sep=5em]
   & (\underbrace{a, \cdots,a}_{m_1}, \underbrace{b, \cdots,b}_{m_2},a, \underbrace{b, \cdots,b}_{m_4},*) \arrow[r,"(-1)^{m_1+m_2+1}"] & (\underbrace{a, \cdots,a}_{m_1}, \underbrace{b, \cdots,b}_{m_2}, ba, \underbrace{b, \cdots,b}_{m_4-1},*) \arrow[ld,dashed,"(-1)^{m_1+m_2+2}",swap] \\
   & (\underbrace{a, \cdots,a}_{m_1}, \underbrace{b, \cdots,b}_{m_2+1}, a, \underbrace{b, \cdots,b}_{m_4-1},*)
        \end{tikzcd}
\end{equation}
with weight $-1$. This leads to two subcases:

\noindent \hspace*{4em}\textbf{Case 1.2.3}   If \(m_4\ge 2\), the short path (\ref{path3}) reaches \((m'_1, m'_2, m'_3, m'_4, *')\in \T_n\), where
\begin{equation}
    m'_1= m_1 \ge 1,\;  m'_2= m_2+1\ge 2,\;  m'_3=1,\;  m'_4 =m_4-1\ge 1, \text{ and } *'=*.
\end{equation}

\noindent \hspace*{4em}\textbf{Case 1.2.4}   If \(m_4= 1\), the short path (\ref{path3}) reaches \((m'_1, m'_2, m'_3, m'_4, *') \in \T_n\), where if \(*= \emptyset\),
\begin{equation}
    m'_1= m_1 \ge 1,\;  m'_2= m_2+1\ge 2,\;  m'_3=1, \;  m'_4 =0, \text{ and } *'=\emptyset;
\end{equation}
if \(*\neq \emptyset\),
\begin{equation}
    m'_1= m_1 \ge 1, \;  m'_2= m_2+1\ge 2,\;  m'_3\ge 2,\;  m'_4 \ge 0.
\end{equation}

\noindent \hspace*{2em}\textbf{Case 1.3} If \(m_3=1, m_4 =0\), then \(m_2 \ge 2\) since the sequence contains at least two \(b\)'s. From this vertex, there are exactly two short paths. The first  is
\begin{equation} \label{path4}
    \begin{tikzcd}
   & (\underbrace{a, \cdots,a}_{m_1}, \underbrace{b, \cdots,b}_{m_2}, a) \arrow[r,"(-1)^{m_1}"] & (\underbrace{a, \cdots,a}_{m_1-1}, ba, \underbrace{b, \cdots,b}_{m_2-1}, a) \arrow[ld,dashed,"(-1)^{m_1+1}",swap] \\
   & (\underbrace{a, \cdots,a}_{m_1-1}, b, a, \underbrace{b, \cdots,b}_{m_2-1}, a)   
        \end{tikzcd}
\end{equation}
with weight $-1$.

\noindent \hspace*{4em}\textbf{Case 1.3.1} The short path (\ref{path4}) reaches \((m'_1, m'_2, m'_3, m'_4, *') \in \T_n\),  where
\begin{equation}
    m'_1= m_1-1 \ge 0,\;  m'_2= 1,\;  m'_3=1,\;  m'_4 =m_2-1\ge 1, \text{ and } *'=(a).
\end{equation}
The second is
\begin{equation} \label{path5}
    \begin{tikzcd}
   & (\underbrace{a, \cdots,a}_{m_1}, \underbrace{b, \cdots,b}_{m_2}, a) \arrow[rr,"(-1)^{n} 1 \otimes a"] && (\underbrace{a, \cdots,a}_{m_1}, \underbrace{b, \cdots,b}_{m_2})  
        \end{tikzcd}
\end{equation}
with weight $(-1)^{n} 1 \otimes a$.

\noindent \hspace*{4em}\textbf{Case 1.3.2} The target of the short path (\ref{path5}) belongs to \(\calW^{(n-2)}\). 

\noindent \textbf{Case 2} If \(m_1 =0\).

\noindent \hspace*{2em}\textbf{Case 2.1} If \(m_3\ge 2\) and \(m_2 \ge 2\), there is no zigzag path from \((\underbrace{b, \cdots,b}_{m_2}, \underbrace{a, \cdots,a}_{m_3}, \underbrace{b, \cdots,b}_{m_4},*)\) to \(\calW^{(n-2)}\).

\noindent \hspace*{2em}\textbf{Case 2.2} If \(m_3\ge 2, m_2 =1\), then \(m_4 \ge 1\)  since the sequence contains at least two \(b\)'s.

\noindent \hspace*{4em}\textbf{Case 2.2.1} If \(*\neq \emptyset\), there is no zigzag path from \(( b, \underbrace{a, \cdots,a}_{m_3}, \underbrace{b, \cdots,b}_{m_4},*)\) to \(\calW^{(n-2)}\).

\noindent \hspace*{4em}\textbf{Case 2.2.2} If \(*=\emptyset\), from this vertex, there is only one short path 
\begin{equation} \label{path6}
    \begin{tikzcd}
   & (b, \underbrace{a, \cdots,a}_{m_3},  \underbrace{b, \cdots,b}_{m_4}) \arrow[r,"b \otimes 1"] & (\underbrace{a, \cdots,a}_{m_3}, \underbrace{b, \cdots,b}_{m_4}) 
        \end{tikzcd}
\end{equation}
with weight $b \otimes 1$. The target of the short path (\ref{path6}) belongs to  \(\calW^{(n-2)}\).

\noindent \hspace*{2em}\textbf{Case 2.3} If \(m_3=1\), then \(*\neq \emptyset\) and \(m_4 \ge 1\) since the sequence contains at least two \(a\)'s. From this vertex, there is only one short path
\begin{equation} \label{path7}
    \begin{tikzcd}
   & (\underbrace{b, \cdots,b}_{m_2}, a,  \underbrace{b, \cdots,b}_{m_4}, *) \arrow[r,"(-1)^{m_2+1}"] & (\underbrace{b, \cdots,b}_{m_2}, ba, \underbrace{b, \cdots,b}_{m_4-1}, *) \arrow[ld,dashed,"(-1)^{m_2+2}"]\\
   & (\underbrace{b, \cdots,b}_{m_2+1}, a, \underbrace{b, \cdots,b}_{m_4-1}, *)
        \end{tikzcd}
\end{equation}
with weight $-1$. This leads to two subcases:

\noindent \hspace*{4em}\textbf{Case 2.3.1} If \(m_4 \ge 2\), the short path (\ref{path7}) reaches \((m'_1, m'_2, m'_3, m'_4, *')\in \T_n\), where
\begin{equation}
    m'_1=0,\; m'_2=m_2+1 \ge 2,\; m'_3=1,\; m'_4=m_4-1 \ge 1, \text{ and } *'=*.
\end{equation}

\noindent \hspace*{4em}\textbf{Case 2.3.2} If \(m_4 =1\), the short path (\ref{path7}) reaches \((m'_1, m'_2, m'_3, m'_4, *')\in \T_n\), where
\begin{equation}
    m'_1=0,\; m'_2=m_2+1 \ge 2,\; m'_3\ge 2,\; m'_4\ge 0.
\end{equation}

\begin{remark}\label{rem: all used short path}
    For any vertex \((\underbrace{a, \cdots,a}_{m_1}, \underbrace{b, \cdots,b}_{m_2}, \underbrace{a, \cdots,a}_{m_3}, \underbrace{b, \cdots,b}_{m_4},*)\) in \(\T_n\), among all reachable vertices from this vertex via the above mentioned short paths,  the number of \(a\)'s following each \(b\) never decreases and the number of \(a\)'s preceding each \(b\) never increases.
\end{remark}

Using the discussion above, we determine all zigzag paths from \((\underbrace{a, \cdots,a}_{n-r-1}, b, a, \underbrace{b, \cdots,b}_{r-1}) \in \T_n\) to \(\calW^{(n-2)}\), where \(2 \le r \le n-2\). Under the notation introduced above, this vertex is denoted by \((n-r-1, 1, 1, r-1, *)\), where \(*=\emptyset\), and falls under \textbf{Case 1.2}. From this vertex, there are two short paths:
\begin{enumerate}
    \item The first choice is to follow the short path (\ref{path2}); this falls under \textbf{Case 1.2.2}, and reaches the vertex \((n-r-2, 1, 2, r-1, *)\in \T_n\).
    \item The second choice is to  follow the short path (\ref{path3}); this falls under \textbf{Case 1.2.3} or \textbf{Case 1.2.4}. Since \(*=\emptyset\), both cases reach the same vertex \((n-r-1, 2, 1, r-2, *)\in \T_n\).
\end{enumerate}

We now consider the subsequent zigzag paths for the first choice. If \(r=n-2\), then following the short path (\ref{path6}) from \((n-r-2, 1, 2, r-1, *)\), we reach \((\underbrace{a, \cdots,a}_{n-r}, \underbrace{b, \cdots,b}_{r-1})\in \calW^{(n-2)}\). 
If \(r<n-2\), the short path (\ref{path1}) from this vertex reaches \((n-r-3, 1, 3, r-1,*) \in \T_n\), which still falls under \textbf{Case 1.1.2} whenever  \(n-r-3 >0\). Hence, by repeatedly following the short path  (\ref{path1}) \((n-r-2)\) times from this vertex, we reach \((0,1, n-r, r-1,*)\in \T_n\), which falls under \textbf{Case 2.2.2}. Finally, following the short path (\ref{path6}), we reach \((\underbrace{a, \cdots,a}_{n-r}, \underbrace{b, \cdots,b}_{r-1})\in \calW^{(n-2)}\).

We now consider the subsequent zigzag paths for the second choice. Any zigzag path from \((\underbrace{a, \cdots,a}_{n-r-1}, b, b, a, \underbrace{b, \cdots,b}_{r-2})\) to a vertex in \(\calW^{(n-2)}\) must proceed in one of two ways: it either first follows some zigzag paths to  
\[(b, \underbrace{a, \cdots,a}_{n-r}, \underbrace{b, \cdots,b}_{r-1})\in \T_n,\]
which falls under \textbf{Case 2.2.2}, then follows the short path (\ref{path6}) to \((\underbrace{a, \cdots,a}_{n-r}, \underbrace{b, \cdots,b}_{r-1})\in \calW^{(n-2)}\); 
or first follows some zigzag paths to  
\[(\underbrace{a, \cdots,a}_{n-r-1}, \underbrace{b, \cdots,b}_{r}, a)\in \T_n,\] 
which falls under \textbf{Case 1.3.2}, then follows the short path (\ref{path5}) to \((\underbrace{a, \cdots,a}_{n-r-1}, \underbrace{b, \cdots,b}_{r})\in \calW^{(n-2)}\).

We continue with the first type. For the vertex \((n-r-1, 2, 1, r-2, *)\) with \(*\) empty, the number of \(a\)'s following the second \(b\) is one. By Remark \ref{rem: all used short path}, every vertex reached by a zigzag path from \((n-r-1, 2, 1, r-2, *)\) has at least one \(a\) following the second \(b\). Consequently, there is no zigzag path from  \((\underbrace{a, \cdots,a}_{n-r-1}, b, b, a, \underbrace{b, \cdots,b}_{r-2})\) to \((b, \underbrace{a, \cdots,a}_{n-r}, \underbrace{b, \cdots,b}_{r-1})\).

For the second type, 
\iffalse
we now consider the zigzag paths from \((n-r-1, 2, 1, r-2, *)\) to \((n-r-1, r, 1, 0, *)\). Since the two vertices have the same value of \(m_1 =n-r-1\ge 1\), it suffices to consider that \((n-r-1, 2, 1, r-2, *)\) falls under \textbf{Case 1.2.3}, \textbf{Case 1.2.4}, or \textbf{Case 1.3.2}. If \(r=2\), \((n-r-1, 2, 1, r-2, *)\) falls under \textbf{Case 1.3.2} and coincides with \((n-r-1, r, 1, 0, *)\), and there is no zigzag path of positive length from \((n-r-1, 2, 1, r-2, *)\) to \((n-r-1, r, 1, 0, *)\). If \(r=3\), \((n-r-1, 2, 1, r-2, *)\) falls under \textbf{Case 1.2.4} and then follows the short path (\ref{path3}) to \((n-r-1, r, 1, 0, *)\).  If \(r\ge 4\), \((n-r-1, 2, 1, r-2, *)\) falls under \textbf{Case 1.2.3}, then follows the short path (\ref{path3}) to \((n-r-1, 3, 1, r-3, *)\), which still falls under \textbf{Case 1.2.3} if \(r-3\ge 2\).  Thus, following the short path (\ref{path3}) \((r-3)\) times reaches  \((n-r-1,r-1, 1, 1,*)\in \T_n\), which falls under \textbf{Case 1.2.4}, and then following the short path (\ref{path3}), we reach \((n-r-1, r, 1, 0, *)\). In summary,
\fi
the only zigzag path from \((n-r-1, 2, 1, r-2, *)\) to \((n-r-1, r, 1, 0, *)\) is obtained by repeating the short path (\ref{path3}) \((r-2)\) times.

Based on the above discussion, for \(n\ge 4\) and $2\le r \le n-2$,
\[d_n^{\calM}((\underbrace{a, \cdots,a}_{n-r},\underbrace{b, \cdots,b}_r))= (a \otimes 1 + (-1)^{n+r}1\otimes a) (\underbrace{a, \cdots,a}_{n-r-1},\underbrace{b, \cdots,b}_r)+((-1)^n 1\otimes b+(-1)^{n-r}b \otimes 1)(\underbrace{a, \cdots,a}_{n-r},\underbrace{b, \cdots,b}_{r-1}).\]

% \( I_n = \{ (w_1, \dots, w_n) \mid w_i \in \Nontip(I) \setminus B_0 \text{ for } 1 \le i \le n, \text{and } w_1 \cdots w_n \text{ is a path in } Q \} \).
% \( \calU_n=\{(w_1,\cdots,w_n)\in V_{w,i-1} \mid w=w_1\cdots w_n\in \calB,1\le i\le n-1\}\).

% \( \calD_n=\{ (w_1,\cdots,w_{i-1},w_iw_{i+1},\cdots,w_{n+1})\mid (w_1,\cdots,w_{i-1},w_i,w_{i+1},\cdots,w_{n+1})\in V_{w,i-1},w=w_1\cdots w_n\in \calB,1\le i\le n\}\)

%---------------------------------------------------------------------------------------%
\section{Hochschild cohomology groups}\label{sec: Hochschild cohomology groups}
In the previous  section, we explicitly computed the two-sided Anick resolution of the Xu--Snashall algebra \(A\). In this section, we compute the Hochschild cohomology of \(A\)  using this two-sided Anick resolution.

Applying the functor \( \Hom_{A^e}(-, A) \) to the two-sided Anick resolution \(B_*^{\calM}\), we obtain a cochain complex $(B^{\calM})^*$:
\[ 0\rightarrow \Hom_{A^e}(B_0^{\calM}, A)\xrightarrow{\partial^0}\cdots\xrightarrow{\partial^{n-1}} \Hom_{A^e}(B_n^{\calM}, A)\xrightarrow{\partial^n}\Hom_{A^e}(B_{n+1}^{\calM}, A) \xrightarrow{\partial^{n+1}} \cdots \]
with $\partial^n(f)=-(-1)^n f\circ d_{n+1}^{\calM}$ for $n\ge 0$.
%Let \(d_0^{\calM}\) be the zero map. 
The \(n\)-th Hochschild cohomology group \(\rmHH^n(A)\) of A is
\[\rmHH^n(A)= \Ker( \partial^n) / \rmIm( \partial^{n-1} ). \]

%\(\Nontip(I)=\{e_1, e_2, a, b, c, ba, bc\}\) is a \(K\)-basis of \(A\). 
Set $z_1=e_1$, $z_2=e_2$, $z_3=a$, $z_4=b$, $z_5=c$, $z_6=ba$, $z_7=bc$.

Each \(A^e\)-module homomorphism \(f \) in \(\Hom_{A^e}(B_n^{\calM}, A)\cong \Hom_{A^e}(\bigoplus_{i=0}^{n+1} A^e \cdot s_i^n, A)\) is uniquely determined by the values \(f(s_i^n)\). Specifically, for \(n=0\),
\begin{equation}\label{euq: degree 0 homomorphisn}
    \begin{aligned}
    &f(s_0^0)= \alpha_{0,1} z_1+ \alpha_{0,3} z_3+ \alpha_{0,4} z_4+ \alpha_{0,6} z_6,\\
    &f(s_1^0)= \alpha_{1,2} z_2,
\end{aligned}
\end{equation}
where \(\alpha_{l,j} \in K\), for $l=0$, $j=1,3,4,6$ or $l=1$, $j=2$.
For \(n\ge 1\),
\begin{equation}\label{euq: degree n homomorphisn}
    \begin{aligned}
    &f(s_{l}^n)= \alpha_{l,1} z_1+ \alpha_{l,3} z_3+ \alpha_{l,4} z_4+ \alpha_{l,6} z_6, \; 0\le l \le n,\\
    &f(s_{n+1}^n)= \alpha_{n+1,5} z_5+ \alpha_{n+1,7} z_7,
\end{aligned}
\end{equation}
where \(\alpha_{l,j} \in K\), for $ 0\le l \le n$, $j=1,3,4,6$ or $l=n+1$, $j=5,7$.

Now we introduce a \(K\)-basis of \(\Hom_{A^e}(B_n^{\calM}, A)\). When \(n=0\), for each \(j\in \{1,3,4,6\}\), define  \(f_{0,j}^0 \in \Hom_{A^e}(B_0^{\calM}, A)\) by
\[f_{0,j}^0(s_0^0)=z_j,\; f_{0,j}^0(s_1^0)=0. \] 
Define  \(f_{1,2}^0\in \Hom_{A^e}(B_0^{\calM}, A) \) by
\[f_{1,2}^0(s_0^0)=0,\; f_{1,2}^0(s_1^0)=z_2. \]
These homomorphisms \( \{f_{0,j}^0, j=1,3,4,6; f_{1,2}^0\}\) form a \(K\)-basis of \(\Hom_{A^e}(B_0^{\calM}, A)\).
When \(n\ge 1\), for \(0\le l \le n\) and \(j=1,3,4,6\), define \(f_{l,j}^n\in \Hom_{A^e}(B_n^{\calM}, A) \) by
\[f_{l,j}^n(s_l^n)=z_j,\; f_{l,j}^n(s_k^n)=0,\; 0\le k \le n+1, k \neq l. \] 
For \(j=5,7\), define \(f_{n+1,j}^n\in \Hom_{A^e}(B_n^{\calM}, A) \) by
\[f_{n+1,j}^n(s_{n+1}^n)=z_j,\; f_{n+1,j}^n(s_k^n)=0,\; 0\le k \le n. \] 
These homomorphisms \(\{f_{l,j}^n, 0\le l \le n, j=1,3,4,6; f_{n+1,j}^n,j=5,7\}\) form a \(K\)-basis of \(\Hom_{A^e}(B_n^{\calM}, A)\).

Next we compute \(\rmHH^0(A) \). Let \(f \in \Hom_{A^e}(B_0^{\calM}, A)\) be as in (\ref{euq: degree 0 homomorphisn}). 
\[\begin{aligned}
    & \partial^0(f)(s_0^1) = -(-1)^0f(d_1^{\calM}(s_0^1))= -f((a \otimes1 -1\otimes a) s_0^0) =-af(s_0^0)+f(s_0^0)a =0, \\
    & \partial^0(f)(s_1^1)= -(-1)^0f(d_1^{\calM}(s_1^1))= -f((b \otimes1 -1\otimes b) s_0^0)= -bf(s_0^0) +f(s_0^0)b=0,\\
    & \partial^0(f)(s_2^1)= -(-1)^0f(d_1^{\calM}(s_2^1))= -f((-1\otimes c)s_0^0 +(c \otimes 1)s_1^0)=f(s_0^0)c - cf(s_1^0)= (\alpha_{0,1}- \alpha_{1,2})z_5 +\alpha_{0,4}z_7.
\end{aligned}\]
It follows that
\[ \rmIm( \partial^0)= \Span_K \{f_{2,5}^1, f_{2,7}^1\}.
\]
Moreover, \(f \in \Ker(\partial^0)\) if and only if \(\alpha_{0,1}-\alpha_{1,2}=0\) and \(\alpha_{0,4}=0\), so
\[\rmHH^{0}(A)=\Ker(\partial^0)=\Span_K \{f_{0,1}^0+f_{1,2}^0, f_{0,3}^0, f_{0,6}^0\}.
\]

Then we compute \(\rmHH^n(A)\) for \(n\ge 1\). Let \(f \in \Hom_{A^e}(B_n^{\calM}, A)\) be as in (\ref{euq: degree n homomorphisn}). 
\iffalse
\[\begin{aligned}
     (d_{n+1}^{\calM})^*(f)(s_0^{n+1}) &= f((a \otimes1 +(-1)^{n+1}1\otimes a) s_0^n)\\
     &=af(s_0^n)+(-1)^{n+1}f(s_0^n)a \\
     &=(1+(-1)^{n+1})(\alpha_{0,1}z_3+\alpha_{0,4}z_6), \\
     (d_{n+1}^{\calM})^*(f)(s_{n+1}^{n+1}) &= f((b \otimes1 +(-1)^{n+1} 1\otimes b) s_n^n)\\
     &= bf(s_n^n) +(-1)^{n+1}f(s_n^n)b\\
     &=(1+(-1)^{n+1})(\alpha_{n,1}z_4+ \alpha_{n,3}z_6),\\
     (d_{n+1}^{\calM})^*(f)(s_{n+2}^{n+1}) &=af(s_{n+1}^{n}) + (-1)^{n+1}f(s_{0}^{n})c\\
     &= (-1)^{n+1}(\alpha_{0,1} z_5+\alpha_{0,4}z_7),
\end{aligned}\]
for \(1\le r \le n\),
\[\begin{aligned}
    (d_{n+1}^{\calM})^*(f)(s_r^{n+1}) &= f((a\otimes 1+ (-1)^{n+r+1} 1\otimes a)s_r^n)+((-1)^{n+1} 1\otimes b+(-1)^{n-r+1}b \otimes 1)s_{r-1}^n)\\
     &=af(s_r^n)+(-1)^{n+r+1}f(s_r^n)a+(-1)^{n+1}f(s_{r-1}^n)b+(-1)^{n-r+1}bf(s_{r-1}^n)\\
     &= (1+(-1)^{n-r+1})(\alpha_{r,1} z_3+ \alpha_{r,4} z_6)+ ((-1)^{n+1}+(-1)^{n-r+1})(\alpha_{r-1,1} z_4 +\alpha_{r-1,3} z_6).
\end{aligned}\]
\fi

When \(\rmChar(K)=2\), 
\[\begin{aligned}
     \partial^n(f)(s_r^{n+1}) &= 0, \text{ for } 0\le r \le n+1, \\
     \partial^n(f)(s_{n+2}^{n+1}) &=\alpha_{0,1} z_5+\alpha_{0,4}z_7.    
\end{aligned}\]
It follows that \[\rmIm(\partial^n)= \Span_K \{f_{n+2,5}^{n+1}, f_{n+2,7}^{n+1}\}.\] 
Moreover, \(f \in \Ker(\partial^n)\) if and only if \(\alpha_{0,1}= \alpha_{0,4}=0\). Consequently,
\[\Ker(\partial^n)= \Span_K \{f_{0,j}^n, j=3,6; f_{l,j}^n, 1 \le l \le n, j=1,3,4,6; f_{n+1,j}^n, j=5,7 \}.
\]
The \(n\)-th Hochschild cohomology group
\[\begin{aligned}
    \rmHH^n(A) &= \frac{\Ker(\partial^n)}{\rmIm(\partial^{n-1})}\\
    &=\frac{\Span_K \{f_{0,j}^n, j=3,6; f_{l,j}^n, 1 \le l \le n, j=1,3,4,6; f_{n+1,j}^n, j=5,7 \}}{\Span_K \{f_{n+1,5}^n, f_{n+1,7}^n\}} \\
    &\cong \Span_K \{f_{0,j}^n, j=3,6; f_{l,j}^n, 1 \le l \le n, j=1,3,4,6\}.
\end{aligned}\]

When \(\rmChar(K)\neq 2\) and \(n\) is even, 
\[\begin{aligned}
     \partial^n(f)(s_0^{n+1}) &= 0, \\
     \partial^n(f)(s_{n+1}^{n+1}) &= 0,\\
     \partial^n(f)(s_{n+2}^{n+1}) &=\alpha_{0,1} z_5+\alpha_{0,4}z_7,\\
    \partial^n(f)(s_r^{n+1}) &=2(\alpha_{r-1,1} z_4+\alpha_{r-1,3}z_6), \text{ for } 1 \le r \le n, r \text{ even},\\
     \partial^n(f)(s_r^{n+1}) &=-2(\alpha_{r,1} z_3+\alpha_{r,4}z_6), \text{ for } 1 \le r \le n, r \text{ odd}.
\end{aligned}\]
It follows that
\[\rmIm(\partial^n)= \Span_K \{f_{l,3}^{n+1}-f_{l+1,4}^{n+1}, 1\le l \le n, l \text{ odd}; f_{l,6}^{n+1}, 1 \le l \le n; f_{n+2,j}^{n+1}, j=5,7\}.\]
Moreover, \(f \in  \Ker(\partial^n)\) if and only if \(\alpha_{0,1}=\alpha_{0,4}=0\) and $\alpha_{r,1}=\alpha_{r,3}=\alpha_{r,4}=0$, $1\le r \le n$, $r \text{ odd}$. Hence,
\[\Ker(\partial^n)=\Span_K\{ f_{0,j}^n, j=3,6; f_{l,j}^n, 1\le l \le n, l \text{ even}, j=1,3,4,6; f_{l,6}^n, 1\le l \le n, l \text{ odd}; f_{n+1,j}^n, j=5,7\}.\]

When \(\rmChar(K)\neq 2\) and \(n\) is odd, 
\[\begin{aligned}
     \partial^n(f)(s_0^{n+1}) &=2(\alpha_{0,1}z_3+\alpha_{0,4}z_6), \\
     \partial^n(f)(s_{n+1}^{n+1}) &= 2(\alpha_{n,1}z_4+ \alpha_{n,3}z_6),\\
     \partial^n(f)(s_{n+2}^{n+1}) &= \alpha_{0,1} z_5+\alpha_{0,4}z_7,\\
     \partial^n(f)(s_r^{n+1}) &= 2(\alpha_{r,1}z_3+ \alpha_{r-1,1}z_4+(\alpha_{r,4}+\alpha_{r-1,3})z_6),\; 1 \le r \le n,\; r \text{ even},\\
     \partial^n(f)(s_r^{n+1}) &=0, 1 \le r \le n, r \text{ odd}.
\end{aligned}\]
It follows that
\[\rmIm(\partial^n)= \Span_K \{2f_{0,3}^{n+1}+f_{n+2,5}^{n+1}, 2f_{0,6}^{n+1}+f_{n+2,7}^{n+1}; f_{l,j}^{n+1}, 1\le l \le n, l \text{ even}, j=3,4,6; f_{n+1,j}^{n+1}, j=4,6\}.\]
Moreover, \(f \in  \Ker(\partial^n)\) if and only if \(\alpha_{0,1}=\alpha_{0,4}=\alpha_{n,1}=\alpha_{n,3}=0\) and $\alpha_{r,1}=\alpha_{r-1,1}=\alpha_{r,4}+\alpha_{r-1,3}=0$ for $1 \le r \le n$, $r \text{ even}$. Consequently,
\[\Ker(\partial^n)=\Span_K \left\{\begin{aligned}
   & f_{0,j}^n, j=3,6; f_{l-1,3}^n-f_{l,4}^n, f_{l,3}^n,  f_{l,6}^n, 1\le l \le n, l \text{ even};\\
   & f_{l,j}^n, 1\le l \le n, l \text{ odd}, j=4,6; f_{n+1,j}^n, j=5,7
\end{aligned}\right\}.\]

Therefore, when \(\rmChar(K)\neq 2\), the first Hochschild cohomology group
\[\begin{aligned}
    \rmHH^1(A) &= \frac{\Ker(\partial^1)}{\rmIm(\partial^0)}\\
    &=\frac{\Span_K \{f_{0,j}^1, j=3,6;f_{1,j}^1, j=4,6; f_{2,j}^1, j=5,7 \}}{\Span_K \{f_{2,5}^1, f_{2,7}^1\}} \\
    &\cong \Span_K \{f_{0,j}^1, j=3,6; f_{1,j}^1, j=4,6\}.
\end{aligned}\]
For even \(n\ge 2\), the \(n\)-th Hochschild cohomology group
\[\begin{aligned}
    \rmHH^n(A) &= \frac{\Ker(\partial^n)}{\rmIm(\partial^{n-1})}\\
    &=\frac{\Span_K\{ f_{0,j}^n, j=3,6; f_{l,j}^n, 1\le l \le n, l \text{ even}, j=1,3,4,6; f_{l,6}^n, 1\le l \le n, l \text{ odd}; f_{n+1,j}^n, j=5,7\}} {\Span_K \{2f_{0,3}^n+f_{n+1,5}^n, 2f_{0,6}^n+f_{n+1,7}^n; f_{l,j}^n, 1\le l \le n-1, l \text{ even}, j=3,4,6; f_{n,j}^n, j=4,6\}}\\
    &\cong \Span_K \{2f_{0,3}^n-f_{n+1,5}^n, 2f_{0,6}^n-f_{n+1,7}^n;  f_{l,1}^n, 1\le l \le n-1, l \text{ even}; f_{l,6}^n, 1\le l \le n, l \text{ odd}; f_{n,j}^n, j=1,3\}.
\end{aligned}\]
For odd \(n\ge 2\), the \(n\)-th Hochschild cohomology group
\[\begin{aligned}
    \rmHH^n(A) &= \frac{\Ker(\partial^n)}{\rmIm(\partial^{n-1})}\\
    &=\frac{\Span_K \left\{ \begin{aligned}
        &f_{0,j}^n, j=3,6; f_{l-1,3}^n-f_{l,4}^n, f_{l,3}^n,  f_{l,6}^n, 1\le l \le n, l \text{ even};\\
        &f_{l,j}^n,  1\le l \le n, l \text{ odd}, j=4,6; f_{n+1,j}^n, j=5,7
    \end{aligned}\right\}} {\Span_K \{f_{l,3}^n-f_{l+1,4}^n, 1\le l \le n-1, l \text{ odd}; f_{l,6}^n, 
1 \le l \le n-1; f_{n+1,j}^n, j=5,7\}}\\
    &\cong \Span_K \{f_{0,j}^n, j=3,6;  f_{l,3}^n, 1\le l \le n, l \text{ even}; f_{l,4}^n, 1\le l \le n-1, l \text{ odd}; f_{n,j}^n, j=4,6\},
\end{aligned}\]
and the same formula holds for $n=1$ as well.

The dimensions of \(\rmHH^n(A)\) in each degree are as follows:
\begin{enumerate}
    \item When \(\rmChar(K)= 2\),
    \[\dim_K\rmHH^n(A)=\begin{cases}
        3, & n=0,\\
        4n+2, & n \ge 1,
    \end{cases}\]
    \item when \(\rmChar(K)\neq 2\),
    \[\dim_K\rmHH^n(A)=n+3.\]
\end{enumerate}

%---------------------------------------------------------------------------------------%
\section{Comparison morphisms}\label{sec: Comparison morphisms}
 In this section, we compute the comparison morphism $\Phi_*:B_*^{\calM} \to B_*$ using  the weak self-homotopy approach introduced in \cite{IIVZ15}. We then compute the comparison morphisms $\Psi_*:B_* \to B_*^{\calM}$ by applying \cite[Theorem 3.3]{CLZ24}.

Cibils\cite{Cib90} gave a weak self-homotopy \(\{t_n\}_{n\ge -1}\) of the reduced bar resolution \(B_*\), that is, for $n\geq0$,
\[t_n:B_n\to B_{n+1}\]
is defined by
\[
t_n((a_0\otimes 1)(a_1, \cdots, a_n)) =
\begin{cases}
    (a_0, a_1, \cdots , a_n), & \text{if } a_0 \in A_+, \\
    0, & \text{if } a_0 \in E,
\end{cases}
\]
and \(t_{-1}: A \to B_0\) is defined by $t_{-1}(a_0) =(1\otimes a_0)1$. Now we explicitly compute the comparison morphism \(\Phi_*:B_*^{\calM} \to B_*\) using the weak self-homotopy \(\{t_n\}_{n\ge -1}\).

Let $\Phi_{-1}=\id_A:A\rightarrow{A}$. $\Phi_0:B_0^{\calM}\to B_0$ is given by
\[\Phi_0(e_1)=t_{-1}\circ\Phi_{-1}\circ d_0^\calM(e_1)=(1 \otimes e_1)1, \Phi_0(e_2)=t_{-1}\circ\Phi_{-1}\circ d_0^\calM(e_2)=(1 \otimes e_2)1.\]
We compute \(\Phi_1:B_1^{\calM}\to B_1\) as follows:
\begin{equation*}
    \begin{aligned}
        \Phi_1(s_0^1) &= t_0\circ \Phi_0\circ d_1^{\calM}(s_0^1) = t_0\circ\Phi_0((a\otimes1-1\otimes a)e_1) = t_0((a\otimes e_1-1\otimes a)1) = s_0^1, \\
        \Phi_1(s_1^1) &= t_0\circ\Phi_0\circ d_1^{\calM}(s_1^1) = t_0\circ\Phi_0((b\otimes1-1\otimes b)e_1) = t_0((b\otimes e_1-1\otimes b)1)=s_1^1, \\
        \Phi_1(s_2^1) &= t_0\circ\Phi_0\circ d_1^{\calM}(s_2^1) = t_0\circ\Phi_0((c\otimes1)e_2-(1\otimes c)e_1) = t_0((c\otimes e_2-1\otimes c)1)= s_2^1.
    \end{aligned}
\end{equation*}
Suppose that for $1\le m\le n-1$, 
\begin{equation*}
    \begin{aligned}
        \Phi_m(s_0^m) &=s_0^m, \\
        \Phi_m(s_{m+1}^m) &=s_{m+1}^m,
    \end{aligned}
\end{equation*}
and for $1\le r\le m$,
\begin{equation*}
    \begin{aligned}
        \Phi_m(s_r^m) &=\sum_{i_0+\cdots+i_r = m-r} (-1)^{i_1+2i_2+\cdots+ri_r}(\underbrace{a,\cdots,a}_{i_0},b,\underbrace{a,\cdots,a}_{i_1},b,\cdots,b,\underbrace{a,\cdots,a}_{i_r}).
    \end{aligned}
\end{equation*}
In particular, when $r=m$, the sum reduces to the single term $i_0=\cdots=i_m=0$, hence $\Phi_m(s_m^m) = s_m^m$.

Consider $\Phi_n:B_n^{\calM}\rightarrow{}B_n$, 
\[\begin{aligned}
    \Phi_n(s_0^n) &= t_{n-1}\circ\Phi_{n-1}\circ d_n^{\calM}(s_0^n) = t_{n-1}\circ\Phi_{n-1}((a\otimes1+(-1)^n1\otimes a) s_0^{n-1})\\
                  &=t_{n-1}((a\otimes1+(-1)^n1\otimes a)s_0^{n-1}) = s_0^n,\\
    \Phi_n(s_n^n) &= t_{n-1}\circ\Phi_{n-1}\circ d_n^{\calM}(s_n^n)  = t_{n-1}((b\otimes1+(-1)^n1\otimes b)s_{n-1}^{n-1})= s_n^n,\\
    \Phi_n(s_{n+1}^n) &= t_{n-1}\circ \Phi_{n-1}\circ d_n^{\calM}(s_{n+1}^n) = t_{n-1}((a\otimes1)s_n^{n-1} +(-1)^n(1\otimes c)s_0^{n-1})= s_{n+1}^n,
\end{aligned}\]
\iffalse
\begin{flalign*}
    & \Phi_n(s_0^n) = t_{n-1}\circ\Phi_{n-1}\circ d_n^{\calM}(s_0^n) &\\
    & \hphantom{\Phi_n(s_0^n)} = t_{n-1}\Phi_{n-1}((a\otimes1+(-1)^n1\otimes a) s_0^{n-1}) &\\
    & \hphantom{\Phi_n(s_0^n)} = t_{n-1}((a\otimes1+(-1)^n1\otimes a) (\underbrace{a,\cdots,a}_{n-1})) &\\
    & \hphantom{\Phi_n(s_0^n)} = (\underbrace{a,\cdots,a}_{n}), &
\end{flalign*}
\begin{flalign*}
    & \Phi_n(s_n^n) = t_{n-1}\Phi_{n-1}d_n^{\calM}(s_n^n) &\\
    & \hphantom{\Phi_n(s_n^n)} = t_{n-1}((b\otimes1+(-1)^n1\otimes b) (\underbrace{b,\cdots,b}_{n-1})) &\\
    & \hphantom{\Phi_n(s_n^n)} = (\underbrace{b,\cdots,b}_{n}), &
\end{flalign*}
\begin{flalign*}
    & \Phi_n(s_{n+1}^n) = t_{n-1}\Phi_{n-1}d_n^{\calM}(s_{n+1}^n) &\\
    & \hphantom{\Phi_n(s_{n+1}^n)} = t_{n-1}((a\otimes1) (\underbrace{a,\cdots,a}_{n-2},c)+(-1)^n(1\otimes c) (\underbrace{a,\cdots,a}_{n-1})) &\\
    & \hphantom{\Phi_n(s_{n+1}^n)} = (\underbrace{a,\cdots,a}_{n-1},c), &
\end{flalign*}
\fi
and for $1\le r\le n-1$,
\[\begin{aligned}
    \Phi_n(s_r^n) &= t_{n-1}\circ\Phi_{n-1}\circ d_n^{\calM}(s_r^n) \\
    &= t_{n-1}\circ\Phi_{n-1}((a \otimes 1 + (-1)^{n+r}1\otimes a) s_r^{n-1}+((-1)^n 1\otimes b+(-1)^{n-r}b \otimes 1)s_{r-1}^{n-1}),\\
\end{aligned}\]
 we need to consider two cases $r=1$ and $2\le r \le n-1$ separately. The general argument below works for \(2\le r \le n-1\), and the case \(r=1\) can be verified in a similar way.
 \[\begin{aligned}
    \Phi_n(s_r^n) &= \sum_{i_0+\cdots+i_r = n-r-1}(-1)^{i_1+2i_2+\cdots+ri_r}(a,\underbrace{a,\cdots,a}_{i_0},b,\underbrace{a,\cdots,a}_{i_1},b,\cdots,b,\underbrace{a,\cdots,a}_{i_r})\\
       &\quad + (-1)^{n-r}\sum_{i_0+\cdots+i_{r-1} = n-r}(-1)^{i_1+2i_2+\cdots+(r-1)i_{r-1}}(b,\underbrace{a,\cdots,a}_{i_0},b,\cdots,b,\underbrace{a,\cdots,a}_{i_{r-1}})\\
       & = \sum_{\substack{i_0\ge 1 \\ i_0+\cdots+i_r = n-r}}(-1)^{i_1+2i_2+\cdots+ri_r}(\underbrace{a,\cdots,a}_{i_0},b,\underbrace{a,\cdots,a}_{i_1},b,\cdots,b,\underbrace{a,\cdots,a}_{i_r})\\
       & \quad + \sum_{\substack{i_0=0 \\ i_0+\cdots+i_r = n-r}}(-1)^{i_1+2i_2+\cdots+ri_r}(b,\underbrace{a,\cdots,a}_{i_1},b,\cdots,b,\underbrace{a,\cdots,a}_{i_r}) \\
       &= \sum_{i_0+\cdots+i_r = n-r}(-1)^{i_1+2i_2+\cdots+ri_r}(\underbrace{a,\cdots,a}_{i_0},b,\underbrace{a,\cdots,a}_{i_1},b,\cdots,b,\underbrace{a,\cdots,a}_{i_r}). 
\end{aligned}\]
\iffalse
\begin{flalign*}
     & \Phi_n(s_r^n) = t_{n-1}\Phi_{n-1}d_n^{\calM}(s_r^n) &\\
    & \hphantom{\Phi_n(s_{r+1}^n)} = \sum_{i_0+\cdots+i_r = n-r-1}(-1)^{i_1+\cdots+ri_r}(a,\underbrace{a,\cdots,a}_{i_0},b,\underbrace{a,\cdots,a}_{i_1},b,\cdots,b,\underbrace{a,\cdots,a}_{i_r}) &\\
    & \hphantom{\Phi_n(s_{r+1}^n)} \quad + (-1)^{n-r}\sum_{i_0+\cdots+i_{r-1} = n-r}(-1)^{i_1+\cdots+(r-1)i_{r-1}}(b,\underbrace{a,\cdots,a}_{i_0},b,\cdots,b,\underbrace{a,\cdots,a}_{i_{r-1}}) &\\
    & \hphantom{\Phi_n(s_{r+1}^n)} = \sum_{\substack{i_0\ge 1 \\ i_0+\cdots+i_r = n-r}}(-1)^{i_1+\cdots+ri_r}(\underbrace{a,\cdots,a}_{i_0},b,\underbrace{a,\cdots,a}_{i_1},b,\cdots,b,\underbrace{a,\cdots,a}_{i_r})&\\
    & \hphantom{\Phi_n(s_{r+1}^n)} \quad + \sum_{\substack{i_0=0 \\ i_0+\cdots+i_r = n-r}}(-1)^{i_1+\cdots+ri_r}(b,\underbrace{a,\cdots,a}_{i_1},b,\cdots,b,\underbrace{a,\cdots,a}_{i_r}) &\\
    & \hphantom{\Phi_n(s_{r+1}^n)} = \sum_{i_0+\cdots+i_r = n-r}(-1)^{i_1+\cdots+ri_r}(\underbrace{a,\cdots,a}_{i_0},b,\underbrace{a,\cdots,a}_{i_1},b,\cdots,b,\underbrace{a,\cdots,a}_{i_r}). &
\end{flalign*}
\fi

In \cite[Theorem 3.3]{CLZ24}, Jun Chen, Yuming Liu, and Guodong Zhou constructed the comparison morphisms between $B_*$ and $B_*^{\calM}$. The comparison morphism $\Psi_*:B_*\to B_*^{\calM}$ is given by
\[
\begin{aligned}
& \Psi_n : B_n \to B_n^{\calM} \\
& x \in \mathcal{V}_n\mapsto \Psi_n(x) := 
\begin{cases} 
\displaystyle \sum_{w \in \mathcal{V}_n^{\calM}} \sum_{p \in \calP_1^{\calM}(x,w)} \varphi_p^{\calM}(x), & x \in \calD_n, \\
x, & x \in \mathcal{V}_n^\calM=\calW^{(n-1)}, \\
0, & x \in \calU_n. 
\end{cases}
\end{aligned}
\]

\begin{remark}
Both \(\Phi_*\) and \(\Psi_*\) lift \(\id_A\colon A \to A\); by the Comparison Theorem, they are homotopy equivalences.
Moreover, a direct computation shows a stronger relation
\[
\Psi_* \circ \Phi_*= \id_{B_*^{\calM}}.
\]
\end{remark}
 Next we record the values \(\Psi_n(x)\) for the elements \(x\in \calD_n\) that are used below. They are obtained by enumerating the corresponding
zigzag paths in $\overline Q_B^{\calM}$.

For \(n\ge 1\), \((\underbrace{a,\cdots,a}_{n-1},ba)\in \calD_n\),
\[ \Psi_n((\underbrace{a,\cdots,a}_{n-1},ba))=(b\otimes1)s_0^n+(1\otimes a)s_1^n.\]

For \(n\ge 2\) and \(1\le i\le n-1\), \((\underbrace{a,\cdots,a}_{i-1},ba,\underbrace{a,\cdots,a}_{n-i})\in \calD_n\),
\[ \Psi_n((\underbrace{a,\cdots,a}_{i-1\ge 0},ba,\underbrace{a,\cdots,a}_{n-i\ge 1}))=(b\otimes1)s_0^n.\]

For \(n\ge 1\), \((\underbrace{a,\cdots,a}_{n-1}, bc)\in \calD_n\),
\[ \Psi_n((\underbrace{a,\cdots,a}_{n-1}, bc))=(b\otimes1)s_{n+1}^n+ (1\otimes c)s_1^n.\]

For \(n\ge 2\) and \( 1\le i\le n-1\), \((\underbrace{a,\cdots,a}_{i-1},ba,\underbrace{a,\cdots,a}_{n-i-1},c)\in \calD_n\),
\[ \Psi_n((\underbrace{a,\cdots,a}_{\ge 0},ba,\underbrace{a,\cdots,a}_{\ge 0},c))=(b\otimes1)s_{n+1}^n.\]

For \(n=2\), \((ba,b)\in \calD_2\),
\[ \Psi_2((ba,b))=(b\otimes 1)s_1^2+(1\otimes a)s_2^2.\]
\iffalse
\[
\begin{tikzpicture}[>=Stealth, baseline=(current bounding box.center)]
    % 节点
    \node (A) at (0,0) {\((ba,b)\)};
    \node (B) at (-4,-3) {\((b,a,b)\)};
    \node (C) at (0,-3) {\((a,b)\)};
    \node (D) at (0,-4) {\((b,ba)\)};
    \node (E) at (-4,-7) {\((b,b,a)\)};
    \node (F) at (0,-7) {\((b,b)\)};
    % 斜箭头：从 A 的下方到 B 的上方
    \draw[->, dashed] (A.south) -- (B.north) node[midway, right, font=\small] {\((-1)^{1+1}\)};
    \draw[->] (B.east) -- (C.west) node[midway,above, font=\small] {\(b\otimes1\)};
    \draw[->] (B.south) -- (D.north) node[midway, below, font=\small] {\((-1)^2\)};
    \draw[->] (D.south) -- (E.north) node[midway, above, font=\small] {\((-1)^{2+1}\)};
    \draw[->] (E.east) -- (F.west) node[midway, above, font=\small] {\((-1)^31\otimes a\)};
\end{tikzpicture}
\]
\fi

For \(n\ge 3\) and \(1\le i\le n-1\), \((\underbrace{a,\cdots,a}_{i-1}, ba,\underbrace{a,\cdots,a}_{n-i-1},b)\in \calD_n\). If \(1\le i\le n-2\), 
\iffalse
it has one dotted outgoing arrow 
 \[ \begin{tikzcd}
   & &(\underbrace{a,\cdots,a}_{i-1},ba,\underbrace{a,\cdots,a}_{n-i-1},b) \arrow[ld,dashed,"(-1)^{i+1}"] \\
   & (\underbrace{a,\cdots,a}_{i-1},b,\underbrace{a,\cdots,a}_{n-i},b) \in \T_{n+1}
    \end{tikzcd}\]
 
with weight $(-1)^{i+1}$. From \((\underbrace{a,\cdots,a}_{i-1},b,\underbrace{a,\cdots,a}_{n-i\ge2},b)\), following the short path (\ref{path1}) repeatedly \((i-1)\) times and then the short path (\ref{path6}), we reach \((a,\cdots,a,b)\in \calW^{(n-1)}\). Hence,
\fi
$$\Psi_n((\underbrace{a,\cdots,a}_{\ge 0},ba,\underbrace{a,\cdots,a}_{\ge 1},b))=(b\otimes1)s_1^n.$$
If $i=n-1$, 
\iffalse
$(a,\cdots,a,ba,b)$ has exactly one dotted outgoing arrow to \((\underbrace{a,\cdots,a}_{i-1},b,a,b)\) with weight \((-1)^{i+1}\).

 \[ \begin{tikzcd}
   & &(\underbrace{a,\cdots,a}_{i-1},ba,b) \arrow[ld,dashed,"(-1)^{i+1}"] \\
   & (\underbrace{a,\cdots,a}_{i-1},b,a,b) \in \T_{n+1}
    \end{tikzcd}\]
\fi  
\iffalse
\((\underbrace{a,\cdots,a}_{i-1},b,a,b)\) falls under \textbf{Case 1.2} and admits two short paths. If it follows the short path (\ref{path2}) to \((\underbrace{a,\cdots,a}_{i-2},b,a,a,b)\), then by following the short path repeatedly (\ref{path1}) \((i-2)\) times and the short path (\ref{path6}), we reach \((\underbrace{a,\cdots,a}_{i},b)\). If it follows the short path (\ref{path3}) to \((\underbrace{a,\cdots,a}_{i-1},b,b,a)\), which falls under \textbf{Case 1.3} and admits two short paths. One is the short path (\ref{path4}), which reaches \((\underbrace{a,\cdots,a}_{i-2},b,a,b,a)\). However, there is no zigzag path from \((\underbrace{a,\cdots,a}_{i-2},b,a,b,a)\) to \((a,\cdots,a,b,b,a)\) or \((b,a,\cdots,a,b) \in \T_{n+1}\) since the number of \(a\)'s following the first and second \(b\) never decreases along any zigzag path.  The other is the short path (\ref{path5}), which reaches \((\underbrace{a,\cdots,a}_{i-1},b,b)\in \calW^{(n-1)}\). Hence,
\fi
$$\Psi_n((a,\cdots,a,ba,b))=(b\otimes1)s_1^n+(1\otimes a)s_2^n.$$

For \(n\ge 2\), \((a,\cdots,a,b,ba)\in \calD_n\),
$$\Psi_n((a,\cdots,a,b,ba))=(1\otimes a)s_2^n.$$

For \(n\ge 3\),  \((\underbrace{a,\cdots,a}_{x},ba,\underbrace{a,\cdots,a}_{y},b,\underbrace{a,\cdots,a}_{z})\in \calD_n\), where \(x, y\ge 0\), \(z\ge1\) and \(x+y+z=n-2\). 
It has exactly one dotted outgoing arrow to \((\underbrace{a,\cdots,a}_{x},b,\underbrace{a,\cdots,a}_{y+1},b,\underbrace{a,\cdots,a}_{z})\in \T_{n+1}\) with weight \((-1)^{x+2}\). Since the number of \(a\)'s following the first \(b\) and the number of \(a\)'s following the second \(b\) never decrease along any zigzag path, there is no zigzag path from $(\underbrace{a,\cdots,a}_{x},b,\underbrace{a,\cdots,a}_{y+1},b,\underbrace{a,\cdots,a}_{z})$ to  \((b,a,\cdots,a,b)\) or \((a,\cdots,a,b,b,a)\in \T_{n+1}\). Then

\begin{equation}\label{equ: ba,b}
    \Psi_n((\underbrace{a,\cdots,a}_{\ge 0},ba,\underbrace{a,\cdots,a}_{\ge 0},b,\underbrace{a,\cdots,a}_{\ge 1}))=0.
\end{equation}

For \(n\ge 3\), $(\underbrace{a,\cdots,a}_{x},b,\underbrace{a,\cdots,a}_{y},ba,\underbrace{a,\cdots,a}_{z})\in \calD_n \cup \calU_n$, where 
 $x,y,z\ge0$, $y+z\ge1$ and $x+y+z=n-2$.
 \iffalse
 If $y\ge 1$, $(\underbrace{a,\cdots,a}_{x},b,\underbrace{a,\cdots,a}_{y},ba,\underbrace{a,\cdots,a}_{z}) \in \calU_n$.
 If $y=0$, $(\underbrace{a,\cdots,a}_{x},b, ba,\underbrace{a,\cdots,a}_{z})\in \calD_n$, where $z\ge 1$.
 It has exactly one dotted outgoing arrow to $(\underbrace{a,\cdots,a}_{x},b, b,\underbrace{a,\cdots,a}_{z+1}) \in \T_{n+1}$ with weight $(-1)^{x+y+3}$. There is no zigzag path from $(\underbrace{a,\cdots,a}_{x},b,\underbrace{a,\cdots,a}_{y},b,\underbrace{a,\cdots,a}_{z+1})$ to $(b,a,\cdots,a,b)$ or $(a,\cdots,a,b,b,a)\in \T_{n+1}$. Then
 \fi
 We have
\begin{equation}\label{equ: b,ba}
\Psi_n((\underbrace{a,\cdots,a}_x,b,\underbrace{a,\cdots,a}_y,ba,\underbrace{a,\cdots,a}_z))=0.
\end{equation}

For \(n\ge3\),
$$\begin{aligned}
    \Psi_n((\underbrace{a,\cdots,a}_{n-3}, ba, b,b)) &=(1\otimes a)s_3^n+ (b\otimes 1)s_2^n,\\
    \Psi_n((\underbrace{a,\cdots,a}_{n-3}, b, ba, b)) &=(1\otimes a)s_3^n,\\
    \Psi_n((\underbrace{a,\cdots,a}_{n-3}, b, b,ba)) &=(1\otimes a)s_3^n.\\
\end{aligned}$$

For \(n\ge4\) and \(1\le i \le n-3\),
$$\Psi_n((\underbrace{a,\cdots,a}_{i-1}, ba, \underbrace{a,\cdots,a}_{n-i-2}, b,b))= (b\otimes 1)s_2^n.$$

For \(n\ge4\), \((a,\cdots,a, ba, \underbrace{a,\cdots,a}_{x}, b, \underbrace{a,\cdots,a}_{y}, b, \underbrace{a,\cdots,a}_{z})\in \calD_n\), where \(x,y,z\ge 0\) and \(y+z\ge1\). We have 
\begin{equation}\label{equ: ba,b,b}
    \Psi_n((a,\cdots,a, ba, \underbrace{a,\cdots,a}_{x}, b, \underbrace{a,\cdots,a}_{y}, b, \underbrace{a,\cdots,a}_{z}))=0.
\end{equation}

For \(n\ge4\), \((a,\cdots,a, b, \underbrace{a,\cdots,a}_{x}, ba, \underbrace{a,\cdots,a}_{y}, b, \underbrace{a,\cdots,a}_{z})\in \calD_n \cup \calU_n\) , where \(x,y,z\ge 0\) and \(x+y+z\ge1\). We have 
\begin{equation}\label{equ: b,ba,b}
    \Psi_n((a,\cdots,a, b, \underbrace{a,\cdots,a}_{x}, ba, \underbrace{a,\cdots,a}_{y}, b, \underbrace{a,\cdots,a}_{z}))=0.
\end{equation}

For \(n\ge4\), \((a,\cdots,a, b, \underbrace{a,\cdots,a}_{x}, b, \underbrace{a,\cdots,a}_{y}, ba, \underbrace{a,\cdots,a}_{z})\in \calD_n\cup \calU_n\) , where \(x,y,z\ge 0\) and \(x+y+z\ge1\). We have 
\begin{equation}\label{equ: b,b,ba}
    \Psi_n((a,\cdots,a, b, \underbrace{a,\cdots,a}_{x}, b, \underbrace{a,\cdots,a}_{y}, ba, \underbrace{a,\cdots,a}_{z}))=0.
\end{equation}

Applying the functor \( \Hom_{A^e}(-, A) \) to the reduced bar resolution $B_*$, we obtain a reduced Hochschild cochain complex $B^*$:
\[ 0\rightarrow \Hom_{A^e}(A\otimes_E A, A)\xrightarrow{\epsilon^0}\cdots\xrightarrow{\epsilon^{n-1}} \Hom_{A^e}(A^e\otimes_{E^e} (A_+)^{\otimes n}, A)\xrightarrow{\epsilon^n}\Hom_{A^e}(A^e\otimes_{E^e} (A_+)^{\otimes (n+1)}, A) \rightarrow \cdots, \]
where $\epsilon^n(f)=-(-1)^n f\circ d_{n+1}$ for $n\ge 0$.

\iffalse
Applying the functor \( \Hom_{A^e}(-, A) \) to the two-sided Anick resolution $B_*^{\calM}$ of \(A\), we obtain the cochain complex $(B^{\calM})^*$:
\[ 0\rightarrow \Hom_{A^e}(A\otimes_E A, A)\xrightarrow{(d_1^{\calM})^*}\cdots\rightarrow \Hom_{A^e}(A^e\otimes_{E^e} K\calW^{(n-1)}, A)\xrightarrow{(d_{n+1}^{\calM})^*}\Hom_{A^e}(A^e\otimes_{E^e} K\calW^{(n)}, A) \rightarrow \cdots \]
with $(P_{n}^{\calM})^*(f)=f\circ d_n^{\calM}$ for $n\ge 1.$
\fi

The comparison morphisms $\Phi_*$ and $\Psi_*$ induce cochain maps $\Phi^*:B^*\to (B^{\calM})^*$ and $\Psi^*:(B^{\calM})^*\to B^*,$ whose \(n\)-th components are defined by
\[
\Phi^n:\Hom_{A^e}(A^e\otimes_{E^e}(A_+)^{\otimes n},A)
\longrightarrow
\Hom_{A^e}(A^e\otimes_{E^e}K\calW^{(n-1)},A),
\qquad
f\longmapsto f\circ\Phi_n,
\]
and
\[
\Psi^n:\Hom_{A^e}(A^e\otimes_{E^e}K\calW^{(n-1)},A)
\longrightarrow
\Hom_{A^e}(A^e\otimes_{E^e}(A_+)^{\otimes n},A),
\qquad
g\longmapsto g\circ\Psi_n.
\]

\section{Cup Product}\label{sec: Cup Product}
It is well known that the cup product on $\rmHH^*(A)$ coincides with the cup product on the cohomology of \(B^*\). For \(f\in B^m\) and \(g\in B^n\), the cup product
\(f\cup_B g\in B^{m+n}\) is defined by
\[f \cup_B g((a_1, \cdots , a_{m+n}))=(-1)^{mn}f((a_1, \cdots, a_m))g((a_{m+1}, \cdots, a_{m+n})).\]

Using the induced cochain maps \(\Phi^*\) and \(\Psi^*\), we transfer the product \(\cup_B\) to \((B^{\mathcal M})^*\) and denote it by \(\lor\) as in \cite{BIKLZ26}.
For \(f\in (B^{\mathcal M})^m\) and \(g\in (B^{\mathcal M})^n\), define the product \(f \lor g \in (B^{\mathcal M})^{m+n}\) by
\[f \lor g= \Phi^{m+n}(\Psi^m(f)\cup_B \Psi^n(g))=((f\circ\Psi_m) \cup_B (g\circ\Psi_n))\circ\Phi_{m+n}.\]
The product that \(\lor\) induces on the cohomology \(H((B^{\mathcal M})^*)\cong\rmHH^*(A)\) coincides with the cup product on \(\rmHH^*(A)\). Hence, the induced product \(\lor\) on \(\rmHH^*(A)\) is graded commutative.

\begin{prop} \label{prop: cup product be zero}
      Let \(m,n\ge 1\), and let \(f\in (B^{\calM})^m\) and \(g\in (B^{\calM})^n\).
      \begin{enumerate}
          \item Assume that $2\le r \le m+n$, $i_0+\cdots+i_r=m+n-r$, $i_0,\cdots,i_r\ge0$ and at least two of $i_1,\cdots,i_r$ are nonzero. Then 
      \[ ((f\circ\Psi_m) \cup_B (g\circ\Psi_n))(\underbrace{a,\cdots,a}_{i_0},b,\underbrace{a,\cdots,a}_{i_1},b,\cdots,b,\underbrace{a,\cdots,a}_{i_r})=0.\]
          \item Assume that \(1\le x\le  r \le m+n\), \(i_0\ge 0\), \(i_x\ge 1\), \(i_0+i_x=m+n-r\) and \(i_y=0\) for all \(y\neq 0,x\). If \(m\ne i_0+x\), then
      \[ ((f\circ\Psi_m) \cup_B (g\circ\Psi_n))(\underbrace{a,\cdots,a}_{i_0},\underbrace{b,\cdots,b}_{x},\underbrace{a,\cdots,a}_{i_x},\underbrace{b,\cdots,b}_{r-x})=0.\]
      \end{enumerate}
\end{prop}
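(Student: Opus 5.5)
The plan is to reduce both statements to one structural fact: for \(m\ge 1\), \(\Psi_m\) kills every basis vertex of \(B_m\) whose entries are single letters from \(\{a,b\}\) and which contains an adjacent pair \(b,a\). With this fact in hand, both parts become a combinatorial statement about where the cut between the first \(m\) and the last \(n\) entries can fall.

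\emph{Unwinding the cup product.} By definition of \(\cup_B\),
\[
((f\circ\Psi_m)\cup_B(g\circ\Psi_n))(w_1,\dots,w_{m+n})=(-1)^{mn}\,f(\Psi_m(w_1,\dots,w_m))\,g(\Psi_n(w_{m+1},\dots,w_{m+n})).
\]
Here every \(w_k\in\{a,b\}\), and \((w_1,\dots,w_m)\in\mathcal V_m\) and \((w_{m+1},\dots,w_{m+n})\in\mathcal V_n\) are themselves vertices of \(\overline Q_B\). Since \(m,n\ge 1\), both pieces are nonempty. By the formula for \(\Psi_*\) taken from \cite[Theorem 3.3]{CLZ24}, \(\Psi_k(x)=0\) whenever \(x\in\calU_k\). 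So it suffices to show that each of the two pieces is either an Anick chain or lies in \(\calU_k\), and to determine which case occurs.

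\emph{Key lemma.} Let \(x=(w_1,\dots,w_k)\) have all \(w_j\in\{a,b\}\).

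If \(x\) has the form \((a,\dots,a,b,\dots,b)\), then it is an Anick \((k-1)\)-chain, namely some \(s^k_r\). This holds because the Ufnarovski\u{\i} graph has the edges \(a\to a\), \(a\to b\) and \(b\to b\).

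Otherwise let \(i\) be the first index with \((w_i,w_{i+1})=(b,a)\). Then:
\begin{itemize}
\item \((w_1,\dots,w_i)\) is an Anick \((i-1)\)-chain, but \((w_1,\dots,w_{i+1})\) is not, since \(b\to a\) is not an edge of \(Q_\calW\). Hence \(x\in V_{w,i-1}\).
\item The merged vertex \((w_1,\dots,w_{i-1},ba,w_{i+2},\dots,w_k)\) is a vertex of \(\overline Q_B\), since \(ba\in\Nontip(I)\setminus Q_0\). Its longest Anick-chain prefix is \((w_1,\dots,w_{i-1})\), because \(ba\) is not an arrow and so cannot occur in a chain. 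Hence it lies in \(V_{w,i-2}\).
\end{itemize}
Therefore the arrow \(d_k^i\) from \(x\) to the merged vertex has the shape (\ref{arrow in M}), so it belongs to \(\calM\), and \(x\in\calU_k\). Consequently \(\Psi_k(x)=0\).

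I expect the main care to be needed here: checking the indexing conventions for \(V_{w,i}\) exactly against the definition of \(\calM\), including the boundary case \(i=1\), where \(w_1=b\) and the merged prefix is empty, i.e.\ index \(-1\). This is also consistent with the computations of \(\Psi_n\) recorded in Section \ref{sec: Comparison morphisms}, where all such sequences map to \(0\).

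\emph{Part (1).} In \((a^{i_0},b,a^{i_1},b,\dots,b,a^{i_r})\), every \(j\ge 1\) with \(i_j\ne0\) produces an adjacent pair \(b,a\) at the start of that \(a\)-block. By hypothesis there are at least two such pairs, occupying disjoint position pairs \((p,p+1)\). Cutting between positions \(m\) and \(m+1\) destroys at most one of them. Hence one of the two pieces still contains an adjacent \(b,a\), so by the lemma its \(\Psi\)-image vanishes, and so does the whole cup product.

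\emph{Part (2).} The sequence \((a^{i_0},b^x,a^{i_x},b^{r-x})\) with \(x\ge1\) and \(i_x\ge1\) contains exactly one adjacent \(b,a\), at positions \((i_0+x,\,i_0+x+1)\). Both pieces avoid this pair only if the cut separates it, that is, only if \(m=i_0+x\). If \(m\ne i_0+x\), the pair lies entirely inside either the first \(m\) entries or the last \(n\) entries. By the lemma the corresponding \(\Psi\)-value is \(0\), which gives the vanishing.
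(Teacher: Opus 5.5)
Your proof is correct and follows the same approach as the paper: in every case one of the two pieces $(w_1,\dots,w_m)$ or $(w_{m+1},\dots,w_{m+n})$ still contains an adjacent pair $b,a$, so it lies in $\calU$ and is killed by $\Psi$. Your key lemma supplies the justification for this membership in $\calU$, which the paper only asserts. Your observation that the cut destroys at most one $(b,a)$ pair replaces the paper's case split on whether $m\le i_0+\cdots+i_{x-1}+x$.
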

\begin{proof}
    (1) Suppose that $i_x,i_y\ge 1$ for $1\le x< y\le r$, and write
\[
(\star1):=((f\circ\Psi_m) \cup_B (g\circ\Psi_n))(\underbrace{a,\cdots,a}_{i_0},b,\underbrace{a,\cdots,a}_{i_1},b,\cdots,b,\underbrace{a,\cdots,a}_{i_x},b,\cdots b,\underbrace{a,\cdots,a}_{i_y},b,\cdots,b,\underbrace{a,\cdots,a}_{i_r}).
\]
Let $*$ be a  (possibly empty) part of the sequence $(\underbrace{a,\cdots,a}_{i_0},b,\cdots,b,\underbrace{a,\cdots,a}_{i_x},b,\cdots b,\underbrace{a,\cdots,a}_{i_y},b,\cdots,b,\underbrace{a,\cdots,a}_{i_r}).$

If \(m\le i_0+\cdots+i_{x-1}+x\),
\[
(\star1)=(-1)^{mn}f\circ\Psi_m((*)) g\circ\Psi_n((*,\underbrace{a,\cdots,a}_{i_x\ge1},\underbrace{b,\cdots,b}_{\ge1},\underbrace{a,\cdots,a}_{i_y\ge1},b,\cdots,b,\underbrace{a,\cdots,a}_{i_r}))=0
\]
since $(*,\underbrace{a,\cdots,a}_{i_x\ge1},\underbrace{b,\cdots,b}_{\ge1},\underbrace{a,\cdots,a}_{i_y\ge1},b,\cdots,b,\underbrace{a,\cdots,a}_{i_r})\in \calU_n.$

If \(m\ge i_0+\cdots+i_{x-1}+x+1\),
\[
(\star1)=(-1)^{mn}f\circ\Psi_m((\underbrace{a,\cdots,a}_{i_0},b,\cdots,b,\underbrace{a,\cdots,a}_{i_{x-1}},\underset{\substack{\uparrow\\(i_0+\cdots+i_{x-1}+x)\text{-th position}}}{b},a,*)) g\circ\Psi_n((*))=0
\]
since $(\underbrace{a,\cdots,a}_{\ge 0},b,\cdots,b,\underbrace{a,\cdots,a}_{\ge 0},b,a,*)\in \calU_m.$

(2) The proof is similar to that of (1). 
\iffalse
Write
\[(\star2):=((f\circ\Psi_m) \cup_B (g\circ\Psi_n))(\underbrace{a,\cdots,a}_{i_0},\underbrace{b,\cdots,b}_{x},\underbrace{a,\cdots,a}_{i_x},\underbrace{b,\cdots,b}_{r-x}).\]

 If $m\le i_0,$
\[ (\star2)=(-1)^{mn}f\circ\Psi_m((a,\cdots,a))g\circ\Psi_n((\underbrace{a,\cdots,a}_{i_0-m\ge 0},\underbrace{b,\cdots,b}_{x\ge 1},\underbrace{a,\cdots,a}_{i_x\ge 1},\underbrace{b,\cdots,b}_{r-x\ge0}))=0,\]
since $(\underbrace{a,\cdots,a}_{\ge 0},\underbrace{b,\cdots,b}_{\ge 1},\underbrace{a,\cdots,a}_{\ge 1},\underbrace{b,\cdots,b}_{\ge0})\in \calU_n.$

If $i_0+1\le m \le i_0+x-1,$
\[(\star2)=(-1)^{mn}f\circ\Psi_m((\underbrace{a,\cdots,a}_{i_0\ge0},\underbrace{b,\cdots,b}_{m-i_0\ge1}))g\circ\Psi_n((\underbrace{b,\cdots,b}_{x-(m-i_0)\ge 1},\underbrace{a,\cdots,a}_{i_x\ge 1},\underbrace{b,\cdots,b}_{r-x\ge0}))=0,\]
since $(\underbrace{b,\cdots,b}_{\ge 1},\underbrace{a,\cdots,a}_{\ge 1},\underbrace{b,\cdots,b}_{\ge0})\in \calU_n.$

If $i_0+x+1\le m,$
\[(\star2)=(-1)^{mn}f\circ\Psi_m((\underbrace{a,\cdots,a}_{i_0\ge0},\underbrace{b,\cdots,b}_{x\ge1},\underbrace{a,\cdots,a}_{\ge1},*))g\circ\Psi_n((*))=0,\]
since $(\underbrace{a,\cdots,a}_{\ge0},\underbrace{b,\cdots,b}_{\ge1},\underbrace{a,\cdots,a}_{\ge1},*)\in \calU_m.$
\fi
\end{proof}
\begin{remark}
     Let $m,n\ge 1$, $1\le r \le m+n$. We consider general computations of $f\lor g$ for any \(f\in (B^{\calM})^m\) and \(g\in (B^{\calM})^n\). 
    \[\begin{aligned}
        f\lor g(s_r^{m+n})&=( (f\circ \Psi_m) \cup_B (g \circ \Psi_n)) \circ \Phi_{m+n}(s_r^{m+n})\\
        &= ((f\circ \Psi_m) \cup_B (g\circ\Psi_n)) (\sum_{i_0+\cdots+i_r = m+n-r}(-1)^{i_1+\cdots+ri_r}(\underbrace{a,\cdots,a}_{i_0},b,\underbrace{a,\cdots,a}_{i_1},b,\cdots,b,\underbrace{a,\cdots,a}_{i_r})) \\
        &= \sum_{i_0+\cdots+i_r = m+n-r}(-1)^{i_1+\cdots+ri_r} ((f\circ \Psi_m) \cup_B (g\circ\Psi_n))(\underbrace{a,\cdots,a}_{i_0},b,\underbrace{a,\cdots,a}_{i_1},b,\cdots,b,\underbrace{a,\cdots,a}_{i_r}).\\
    \end{aligned}\]
 By Proposition \ref{prop: cup product be zero}, only the following two terms in the sum may contribute:
\begin{enumerate}
    \item the term with \(i_0=m+n-r\) and \(i_1=\cdots=i_r=0\);
    \item the term with \(1\le x\le r\), \(i_0\ge 0\), \(i_x\ge 1\), \(i_0+i_x=m+n-r\), and \(m= i_0+x\).
\end{enumerate} 
For the second term, the condition becomes $1\le x\le r$, $m-x\ge 0$, and $n-r+x\ge 1$ by substituting $i_0=m-x$ and $i_x=n-r+x$. 
Then
$$ \begin{aligned}
    f\lor g(s_r^{m+n})&=((f\circ \Psi_m) \cup_B (g\circ\Psi_n))(\underbrace{a,\cdots,a}_{m+n-r},\underbrace{b,\cdots,b}_{r}) \\
        & \quad + \sum_{\substack{1\le x \le r \\ m-x\ge 0 \\ n-r+x\ge 1}} (-1)^{x(n-r+x)} ((f\circ \Psi_m) \cup_B (g\circ\Psi_n))(\underbrace{a,\cdots,a}_{m-x\ge 0},\underbrace{b,\cdots,b}_{x\ge 1},\underbrace{a,\cdots,a}_{n-r+x\ge 1},\underbrace{b,\cdots,b}_{r-x\ge 0}),\\
      % &= ((f\circ \Psi_m) \cup_B (g\circ\Psi_n))(s_r^{m+n}) + \sum_{\substack{1\le x \le r \\ m-x\ge 0 \\ n-r+x\ge 1}} (-1)^{mn+x(n-r+x)} f(\underbrace{a,\cdots,a}_{m-x\ge 0},\underbrace{b,\cdots,b}_{x\ge 1}) g(\underbrace{a,\cdots,a}_{n-r+x\ge 1},\underbrace{b,\cdots,b}_{r-x\ge 0})\\
       % & = ((f\circ \Psi_m) \cup_B (g\circ\Psi_n))(s_r^{m+n}) + \sum_{\substack{1\le x \le m \\ 0\le r-x\le n-1}} (-1)^{mn+x(n-r+x)} f(s_x^m) g(s_{r-x}^n).
\end{aligned}$$
hence,
\begin{equation}\label{equ: general cup product}
    f\lor g(s_r^{m+n}) = ((f\circ \Psi_m) \cup_B (g\circ\Psi_n))(s_r^{m+n}) + \sum_{\substack{1\le x \le m \\ 0\le r-x\le n-1}} (-1)^{mn+x(n-r+x)} f(s_x^m) g(s_{r-x}^n).
\end{equation}
\end{remark}

\begin{lem}\label{lem: zero cup product in cohomology}
   Let $m,n\ge 0$.
    \begin{enumerate}
        \item \(f_{0,1}^0+f_{1,2}^0\) is the unit of \(((B^\calM)^*, \lor)\).
        %i.e., $(f_{0,1}^0+f_{1,2}^0)\lor f=f$ for any $f\in (B^\calM)^n $, $n\ge 0$.
        %\([f_{0,1}^0+f_{1,2}^0]\lor [f_{l,j}^n]=[f_{l,j}^n]\), for \(0\le l \le n\), \(j=1,3,4,6\); 
        %\([f_{0,1}^0+f_{1,2}^0]\lor [f_{n+1,j}^n]=[f_{n+1,j}^n]\), for \(n\ge 1\), \(j=5,7\); \([f_{0,1}^0+f_{1,2}^0]\lor [f_{1,2}^0]=[f_{1,2}^0]\).
        \item For \(0\le k\le m\), \(0\le l\le n\), \(i,j=3,6\), \(f_{k,i}^m \lor f_{l,j}^n=0\).
        \item For \(0\le k\le m\), \(0\le l\le n\), \(j=4,6\), \(f_{k,4}^m \lor f_{l,j}^n=0\).
        \item For \(m\ge 1\), \(0\le l \le n\), \(i=5,7\), \(j=1,3,4,6\), \(f_{m+1,i}^m \lor f_{l,j}^n=0\). 
        \item For \(m,n\ge 1\), \(i,s=5,7\), \(f_{m+1,i}^m \lor f_{n+1,s}^n=0\).
    \end{enumerate}
\end{lem}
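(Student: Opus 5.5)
Every part will be proved directly from the definition
\[
f\lor g=\bigl((f\circ\Psi_m)\cup_B(g\circ\Psi_n)\bigr)\circ\Phi_{m+n}.
\]
The plan is not to track the combinatorics of $\Phi_{m+n}$ at all. Instead I will control the \emph{values} of $f\circ\Psi_m$ and $g\circ\Psi_n$ and use the multiplication table of the basis $\{z_1,\dots,z_7\}$ of $A$.

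The basic observation is the following. Each $\Psi_m(x)$ is a finite $A^e$-linear combination of the generators $s_k^m$, with coefficients $u\otimes v$. Hence $(f\circ\Psi_m)(x)$ is a sum of terms $u\,f(s_k^m)\,v$. Consequently, if $f$ takes values in a two-sided ideal $J$ of $A$, then so does $f\circ\Psi_m$. The same holds for $f\circ\Psi_m\circ\Phi_{m+n}$-type expressions, and for $\Psi_0$, which is the identity on $B_0=B_0^{\calM}$.

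Next I will record the relevant ideals using the relations $a^2=b^2=0$, $ab=ba$, $ac=0$:
\[
(a)=\Span_K\{a,ba\},\qquad (ba)=Kba,\qquad (b)=\Span_K\{b,ba,bc\},\qquad (c)=\Span_K\{c,bc\}.
\]
I will then check the products in $A$:
\begin{itemize}
\item $a\cdot a=0$, $a\cdot ba=aba=ba^2=0$, $ba\cdot a=0$ and $ba\cdot ba=0$, so $(a)(a)=0$.
\item $b\cdot b=0$, $b\cdot ba=0$, $b\cdot bc=0$, $ba\cdot b=b^2a=0$, and $bc\cdot b=0$ because the path ends at vertex $2$. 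Together with $(ba)\subset(b)$ this gives $(b)(b)=0$.
\item Every element of $(c)$ ends at vertex $2$, and $e_2A_+=0$, so $(c)\cdot A_+=0$ and $(c)\cdot e_1=0$.
\end{itemize}

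With these facts, parts (2), (3) and (5) follow at once.
\begin{itemize}
\item For part (2), $f_{k,i}^m$ and $f_{l,j}^n$ with $i,j\in\{3,6\}$ take values in $(a)$. Hence every value of $(f\circ\Psi_m)\cup_B(g\circ\Psi_n)$ is a sign times a product in $(a)(a)=0$.
\item For part (3), the same argument works with $(b)(b)=0$.
\item For part (5), both factors take values in $(c)$, and $(c)(c)=0$.
\end{itemize}

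For part (4), the first factor takes values in $(c)$, so each value ends at $e_2$. The second factor $g=f_{l,j}^n$ with $j\in\{1,3,4,6\}$ is supported on generators $s_l^n$, $l\le n$, which are loops at vertex $1$. So $g\circ\Psi_n$ takes values in $e_1Ae_1$, and $(c)\cdot e_1Ae_1=0$. If $n=0$, the value $g(s_0^0)\in e_1Ae_1$ likewise gives zero.

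For part (1), let $u=f_{0,1}^0+f_{1,2}^0$, which sends $e_i\mapsto e_i$. Then $u\circ\Psi_0$ is the augmentation-type map $A\otimes_E A\to A$ evaluated at $1$. So $(u\circ\Psi_0)\cup_B h=h$ for every cochain $h$, the sign $(-1)^{0\cdot n}$ being $1$, and similarly on the right. Therefore
\[
u\lor g=(g\circ\Psi_n)\circ\Phi_n=g\circ(\Psi_n\circ\Phi_n)=g
\]
by the Remark stating $\Psi_*\circ\Phi_*=\id_{B_*^{\calM}}$, and symmetrically $g\lor u=g$.

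The main point needing care is the ideal-stability claim: that $(f\circ\Psi_m)(x)$ really lies in the two-sided ideal generated by the values of $f$. This holds because $\Psi_m$ is $A^e$-linear and $f$ is an $A^e$-homomorphism. The second point needing care is the vertex bookkeeping in part (4), namely that the generators $s_l^n$ for $l\le n$ begin and end at vertex $1$.
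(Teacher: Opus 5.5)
Your approach is the same as the paper's. Part (1) uses $\Psi_*\circ\Phi_*=\id$, and parts (2)--(5) observe that the transferred cochains take values in small ideals of $A$ whose products vanish. Your explicit ideal-stability remark, that $f\circ\Psi_m$ takes values in the two-sided ideal generated by the values of $f$, is the point the paper glosses over when it speaks of "the values on $B_m^{\calM}$."

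There is one false intermediate claim, in part (4). The map $g\circ\Psi_n$ does \emph{not} take values in $e_1Ae_1$, because $\Psi_n$ has coefficients such as $1\otimes c$. For example, $\Psi_1((bc))=(b\otimes 1)s_2^1+(1\otimes c)s_1^1$, so $f_{1,1}^1\circ\Psi_1((bc))=e_1c=c\notin e_1Ae_1$.

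The conclusion still holds, and your own ideal-stability observation gives the fix. The values $z_j$ with $j\in\{1,3,4,6\}$ lie in $e_1Ae_1$. Moreover $Ae_1=e_1Ae_1$, since no nonzero path goes from vertex $2$ to vertex $1$. Hence the values of $g\circ\Psi_n$ lie in the ideal $Ae_1A=e_1A$. The first factor takes values in $(c)\subseteq Ae_2$, so $(c)\cdot e_1A\subseteq Ae_2e_1A=0$.

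Equivalently, a nonzero value of the first factor on $(a_1,\dots,a_m)$ would force $t(a_m)=2$. For $n\ge 1$ there is then no $a_{m+1}\in A_+$ starting at vertex $2$. For $n=0$, your remark that only $g(s_0^0)\in e_1Ae_1$ is nonzero already suffices. With that one sentence replaced, the proof is complete.
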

\begin{proof}
    (1) Let $n\ge 0$ and $f\in (B^\calM)^n $. Let \(u\in \calW^{(n-1)}\).
       \[
\begin{aligned}
(f_{0,1}^0+f_{1,2}^0)\lor f(u)
&= (((f_{0,1}^0+f_{1,2}^0)\circ \Psi_0)\cup_B (f\circ\Psi_n)) (\Phi_n(u))\\
&= (f_{0,1}^0+f_{1,2}^0) (\Psi_0(1))  f(\Psi_n(\Phi_n(u)))\\
&= f(u).
\end{aligned}
\]

(2) The values of \(f_{k,i}^m\) on \(B_m^{\calM}\) and \(f_{l,j}^n\) on \(B_n^{\calM}\) lie in \(\Span_K\{a,ba\}\); moreover, \(aa=aba=baa=baba=0\). Hence, by the definition of the cup product, we have  \(f_{k,i}^m \lor f_{l,j}^n=0\).

%(3) The values of \(f_{k,4}^m\) on \(B_m^{\calM}\) lie in \(\Span_K\{b,ba,bc\}\), and their products with the values of $f_{l,j}^n$ on \(B_n^{\calM}\) are zero.

The remaining cases are analogous, and we omit their proofs.
\iffalse
    The values of \(f_{m+1,i}^m\) on \(B_m^{\calM}\) are contained in \(\Span_K\{c,bc\}\). The values of \(f_{l,j}^n\) on \(B_n^{\calM}\) are contained in \(\Span_K\{e_1, a, b,ba\}\). By the definition of the cup product and \(ce_1=ca=cb=0\), it follows that 
    \(f_{m+1,i}^m \lor f_{l,j}^n=0\). Similarly, we can prove the second formula.
\fi
 \end{proof}
 
\begin{theorem} \label{thm: char 2 cup product}
    When \(\rmChar(K)=2\), for all \(m,n\ge0\), the cup products between the \(K\)-basis elements of $\rmHH^m(A)$ and $\rmHH^n(A)$ are given as follows: 
    \begin{enumerate}
          \item  For $x\in \rmHH^n(A)$, $[f_{0,1}^0+f_{1,2}^0]\lor x=x$. Hence, $[f_{0,1}^0+f_{1,2}^0]$ is the unit of $\rmHH^*(A)$.
        \item For $m, n\ge 0$, \(i,j=3,6\), if \(f_{k,i}^m\) and \(f_{l,j}^n\) are basis elements, $[f_{k,i}^m]\lor[f_{l,j}^n]=0$.
        \item For \( m=0, n\ge 1\), \(i=3,6\), \(1\le l\le n\), \(j=1,3,4,6\), \([f_{0,i}^0]\lor [f_{l,j}^n]=
                        \begin{cases}
                            [f_{l,3}^n], & (i,j)=(3,1),\\
                            [f_{l,6}^n], & (i,j) \in \{(3,4), (6,1)\},\\
                           0, & \text{otherwise}.
                         \end{cases}\)

        \item For \(m,n\ge 1\),
            \begin{enumerate}
                \item \(i=3,6\), \(1\le l\le n\), \(j=1,3,4,6\), \([f_{0,i}^m]\lor [f_{l,j}^n]=
                    \begin{cases}
(-1)^{mn}[f_{l,3}^{m+n}], & (i,j)=(3,1),\\
(-1)^{mn}[f_{l,6}^{m+n}], & (i,j) \in \{(3,4), (6,1)\},\\
0, & \text{otherwise},
                    \end{cases}\)
                \item  \(1\le k\le m\), \(1\le l\le n\), \(i,j=1,3,4,6\), \\
                 \([f_{k,i}^m]\lor [f_{l,j}^n]=
\begin{cases}
(-1)^{mn+k(n-l)}[f_{k+l,1}^{m+n}], & (i,j)=(1,1),\\
(-1)^{mn+k(n-l)}[f_{k+l,3}^{m+n}], & (i,j) \in \{(1,3), (3,1)\},\\
(-1)^{mn+k(n-l)}[f_{k+l,4}^{m+n}], & (i,j) \in \{(1,4), (4,1)\},\\
(-1)^{mn+k(n-l)}[f_{k+l,6}^{m+n}], & (i,j) \in \{(1,6), (3,4), (4,3), (6,1)\},\\
0, & \text{otherwise}.
\end{cases}\)
            \end{enumerate}
    \end{enumerate}
\end{theorem}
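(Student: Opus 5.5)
The plan is to compute everything at cochain level on $(B^{\calM})^*$ with the transferred product $\lor$, using formula (\ref{equ: general cup product}) together with Lemma \ref{lem: zero cup product in cohomology}, and then pass to cohomology via the basis of $\rmHH^n(A)$ found in Section \ref{sec: Hochschild cohomology groups}. In characteristic two that basis is $\{f_{0,j}^n, j=3,6; f_{l,j}^n, 1\le l\le n, j=1,3,4,6\}$, modulo the coboundaries $f_{n+1,5}^n, f_{n+1,7}^n$. Since a product $f\lor g$ of cochains lies in $B^{m+n}$ and is a cocycle whenever $f,g$ are, it suffices to evaluate it on every generator $s_r^{m+n}$, $0\le r\le m+n+1$, and then reduce modulo $\rmIm(\partial^{m+n-1})$.

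\textbf{Parts (1) and (2).} Part (1) follows directly from Lemma \ref{lem: zero cup product in cohomology}(1). Part (2) is Lemma \ref{lem: zero cup product in cohomology}(2).

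\textbf{Part (3), $m=0$.} Here $\Psi_0,\Phi_0$ are identities on $e_1,e_2$. Hence $f_{0,i}^0\lor g(u)=f_{0,i}^0(s_0^0)\,g(\Phi_n(u))$ after applying $\Psi_n\circ\Phi_n=\id$. More precisely, one first checks that $(f\circ\Psi_0)\cup_B(g\circ\Psi_n)$ evaluated on $\Phi_n(u)$ equals $z_i\cdot g(\Psi_n\Phi_n(u))=z_i\, g(u)$. The result is then read off from the multiplication table of $z$'s: $a\cdot e_1=a$, $a\cdot b=ab=ba$, $ba\cdot e_1=ba$, and all other products of $\{a,ba\}$ with $\{e_1,a,b,ba\}$ vanish. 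This gives $f_{l,3}^n$ for $(3,1)$ and $f_{l,6}^n$ for $(3,4),(6,1)$.

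\textbf{Part (4), $m,n\ge1$.} Apply (\ref{equ: general cup product}) to each $s_r^{m+n}$. The sum term $\sum (-1)^{mn+x(n-r+x)}f(s_x^m)g(s_{r-x}^n)$ contributes exactly when $f=f_{k,i}^m$ and $g=f_{l,j}^n$ with $x=k$ and $r-x=l$. This gives the sign $(-1)^{mn+k(n-l)}$ (mod 2 the sign is irrelevant anyway) times $z_iz_j$, so the table in (4b) is the multiplication table of $\{e_1,a,b,ba\}$. For (4a) the index is $k=0$, which is excluded from that sum. The contribution must therefore come from the first term $((f\circ\Psi_m)\cup_B(g\circ\Psi_n))(s_r^{m+n})$, which splits as $f\circ\Psi_m(a,\dots,a)\cdot g\circ\Psi_n(a,\dots,a,b,\dots,b)$ when $m\le m+n-r$, yielding the claim with $r=l$.

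The remaining pieces need separate care. First, the first term must be shown to vanish in case (4b), since $f(s_k^m)$ with $k\ge1$ is evaluated on a pure $a$-string and is zero. Second, values on $s_0^{m+n}$ and $s_{m+n+1}^{m+n}$ must be handled. On $s_{m+n+1}^{m+n}$ we have $\Phi(s_{m+n+1})=(a,\dots,a,c)$, and the product $f\circ\Psi_m(a^m)\cdot g(\dots,c)$ with $g$ taking values in $\{e_1,a,b,ba\}$ on non-$c$ generators vanishes except through coboundary components $f_{m+n+1,5/7}$, which are zero in cohomology.

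\textbf{Main obstacle.} The hardest step is the $s_0$ and $s_{m+n+1}$ components in (4a) and (3), for instance $f_{0,i}^m\lor f_{l,j}^n$ evaluated on $(a^{m+n-1},c)$. I will handle these by noting that any such contribution lies in $\Span\{f_{m+n+1,5}, f_{m+n+1,7}\}=\rmIm\partial^{m+n-1}$ (characteristic two), or vanishes because products like $a\cdot a$ and $b\cdot c$-type terms are forced to be zero.
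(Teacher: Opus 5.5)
Your handling of parts (1)--(3) and (4)(a) is sound, but the argument for (4)(b) fails in the boundary case $l=n$.

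\textbf{The false vanishing claim.} You assert that the first term $((f_{k,i}^m\circ\Psi_m)\cup_B(f_{l,j}^n\circ\Psi_n))(s_r^{m+n})$ vanishes because $f_{k,i}^m$ is evaluated on a pure $a$-string. That holds only for $r\le n$. When $r>n$:
\begin{itemize}
\item the first $m$ entries of $s_r^{m+n}=(a,\dots,a,b,\dots,b)$ are $(a^{m+n-r},b^{r-n})=s_{r-n}^m$;
\item the last $n$ entries are $s_n^n$;
\item so the first term equals $(-1)^{mn}f_{k,i}^m(s_{r-n}^m)\,f_{l,j}^n(s_n^n)$. This is $(-1)^{mn}z_iz_j$, which is nonzero for the table entries, exactly when $l=n$ and $r=k+n$.
\end{itemize}

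\textbf{The sum term does not cover $l=n$.} The sum in (\ref{equ: general cup product}) runs only over $0\le r-x\le n-1$. So the term $x=k$, $r-x=l$ that you single out does not occur when $l=n$. Taken literally, your two claims give $f_{k,i}^m\lor f_{n,j}^n=0$, which contradicts the statement. For example, $f_{1,1}^1\lor f_{1,1}^1(s_2^2)=-f_{1,1}^1(b)\,f_{1,1}^1(b)=-e_1$ comes entirely from the first term, with no contribution from the sum. This is not a cosmetic case: the powers $[f_{1,1}^1]^{\lor l}=[f_{l,1}^l]$, which later make $x_{1,3}$ non-nilpotent, are all products of this $l=n$ type.

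\textbf{The fix.} Split the computation as the paper does.
\begin{itemize}
\item For $1\le l\le n-1$: the first term is zero, since it needs $l=n$. The sum term gives $(-1)^{mn+k(n-l)}z_iz_j$ at $r=k+l$.
\item For $l=n$: the sum term vanishes, because $f_{n,j}^n(s_{r-x}^n)=0$ whenever $r-x\le n-1$. The first term gives $(-1)^{mn}z_iz_j$ at $r=k+n$. This agrees with $(-1)^{mn+k(n-l)}$ because $n-l=0$.
\end{itemize}

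\textbf{The boundary generators.} The components on $s_0^{m+n}$ and $s_{m+n+1}^{m+n}$, which you flag as the main obstacle, are harmless. In both cases the first $m$ letters are $a$'s. So these components vanish because $k\ge 1$ in (4)(b), and because $l\ge 1$, respectively $l\le n$, in (4)(a). In any case, in characteristic two the $s_{m+n+1}^{m+n}$ components are coboundaries.
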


\begin{proof}

    Parts (1) and (2) follow directly from Lemma \ref{lem: zero cup product in cohomology}.
We only prove (4)(b); the remaining parts (3) and (4)(a) can be proved similarly.

    (4)(b) Let $i, j = 1,3,4,6$. Let \(1\le r\le m+n\).
\[\begin{aligned}
     ((f_{k,i}^m\circ \Psi_m) \cup_B (f_{l,j}^n\circ\Psi_n))(s_r^{m+n}) &=\begin{cases}
         (-1)^{mn}f_{k,i}^m(\Psi_m(s_k^m)) f_{l,j}^n(\Psi_n(s_n^n))=(-1)^{mn}z_iz_j, & l=n, r=k+l, \\
         0, & \text{otherwise}.
     \end{cases}
\end{aligned}\]

According to equation (\ref{equ: general cup product}), when \(1\le l \le n-1\),
\[\begin{aligned}
   f_{k,i}^m\lor f_{l,j}^n(s_r^{m+n}) &= \sum_{\substack{1\le x \le m \\ 0\le r-x \le n-1}} (-1)^{mn+x(n-r+x)} f_{k,i}^m(s_x^m) f_{l,j}^n(s_{r-x}^n) 
\end{aligned}\]
The only possible non-zero term is obtained for \(x=k\) and \(r-x=l\). Hence, 
\[f_{k,i}^m\lor f_{l,j}^n(s_r^{m+n})= \begin{cases}
       (-1)^{mn+k(n-l)}  f_{k,i}^m(s_k^m) f_{l,j}^n(s_{r-k}^n)=(-1)^{mn+k(n-l)}z_iz_j, &r=k+l,\\
       0, &r \neq k+l.
   \end{cases}\]
   
When \(l=n\), \(f_{l,j}^n(s_{r-x}^n)=0\) for \(0\le r-x\le n-1\).
%\[\sum_{\substack{1\le x \le m \\ 0\le r-x \le n-1}} (-1)^{mn+x(n-r+x)} f_{k,i}^m(s_x^m) f_{l,j}^n(s_{r-x}^n)=0,\]
Hence,
\[\begin{aligned}
    f_{k,i}^m\lor f_{l,j}^n(s_r^{m+n}) &=((f_{k,i}^m\circ \Psi_m) \cup_B (f_{l,j}^n\circ\Psi_n))(s_r^{m+n})\\
    &=\begin{cases}
        (-1)^{mn}z_iz_j, & r=k+l, \\
         0, & r\neq k+l.
     \end{cases}\\
\end{aligned}\]
Therefore, we have 
\[     f_{k,i}^m\lor f_{l,j}^n(s_r^{m+n}) = \begin{cases}
\displaystyle
\begin{cases}
(-1)^{mn+k(n-l)} e_1, & (i,j) = (1,1), \\
(-1)^{mn+k(n-l)} a, & (i,j) \in \{(1,3), (3,1)\}, \\
(-1)^{mn+k(n-l)} b, & (i,j) \in \{(1,4), (4,1)\}, \\
(-1)^{mn+k(n-l)} ba, & (i,j) \in \{(1,6), (3,4), (4,3), (6,1)\}, \\
0, & \text{otherwise},
\end{cases} & r = k+l, \\
0, & r \neq k+l.
\end{cases}\]
    The values of \(f_{k,i}^m\lor f_{l,j}^n\) on \(s_0^{m+n}, s_{m+n+1}^{m+n}\) are zero.
\end{proof}
\begin{remark} 
Under the assumption that \(\rmChar(K)=2\), all the coefficients appearing in Theorem \ref{thm: char 2 cup product} are equal to \(1\). We keep these coefficients in the formulas since the same expressions will also be useful for the case \(\rmChar(K)\neq2\). 
\end{remark}

\begin{theorem}\label{thm: char neq 2 cup product}
    When \(\rmChar(K)\neq 2\), for all \(m,n \ge 0\), the cup products between the \(K\)-basis elements of $\rmHH^m(A)$ and $\rmHH^n(A)$ are  given as follows:
    \begin{enumerate}
        \item Let $x\in \rmHH^n(A)$ be a basis element.
             \begin{enumerate}
                 \item $[f_{0,1}^0+f_{1,2}^0]\lor x=x$. Hence, $[f_{0,1}^0+f_{1,2}^0]$ is the unit of $\rmHH^*(A)$.
                 \item When even $m\ge 2$ and $
                 x \neq [f_{0,1}^0+f_{1,2}^0]$, $[2f_{0,3}^m-f_{m+1,5}^m]\lor x=0$, $[2f_{0,6}^m-f_{m+1,7}^m]\lor x=0$.
             \end{enumerate}
        \item For $m, n\ge 0$, if \(f_{k,i}^m\) and \(f_{l,j}^n\) are basis elements, $[f_{k,i}^m]\lor[f_{l,j}^n]=0$ whenever \(i,j=3,6\) or \(i=4\), \(j=4,6\).
        \item For \(m=0\), \(n\) even,
            \begin{enumerate}
              \item when \(n\ge 2\), $[f_{0,3}^0]\lor [f_{n,1}^n]=[f_{n,3}^{n}]$, $[f_{0,6}^0]\lor [f_{n,1}^n]=0$;
              \item  when \(n\ge 4\), \(i=3,6\), \(1\le l\le n-1\), \(l\text{ even}\), \([f_{0,i}^0]\lor [f_{l,1}^n]=0\).
                \end{enumerate}
            
        \item For $m=0$, $n \text{ odd}$,
            \begin{enumerate}
                \item when \(n\ge 1\),  $[f_{0,3}^0]\lor [f_{n,4}^n]= [f_{n,6}^{n}]$;

                \item when \(n\ge 3\), \(1\le l\le n-1\), \(l\text{ odd}\), \([f_{0,3}^0]\lor [f_{l,4}^n]=0\).
            \end{enumerate}
  
        \item For \(m, n \) even,
            \begin{enumerate}
            \item when \(m,n\ge 2\), 
            \begin{enumerate}
   \item \(1\le k\le m\), \(k\text{ odd}\),
    $[f_{k,6}^m]\lor[f_{n,1}^n]= [f_{n+k,6}^{m+n}]$,

    \item \(i=1,3\), \(j=1,3\),
   $[f_{m,i}^m]\lor[f_{n,j}^n]=
    \begin{cases}
    [f_{m+n,1}^{m+n}], & (i,j)=(1,1),\\[2pt]
    [f_{m+n,3}^{m+n}], & (i,j)\in\{(1,3),(3,1)\},\\[2pt]
    0, & (i,j)=(3,3);
    \end{cases}$
            \end{enumerate}
            
           \item when \(m\ge 2\), \(n\ge 4\),  
           \begin{enumerate}
    \item \(1\le k\le m\), \(k\text{ odd}\), \(1\le l\le n-1\), \(l\text{ even}\),
    \([f_{k,6}^m]\lor[f_{l,1}^n]=[f_{k+l,6}^{m+n}]\),

    \item \(1\le l\le n-1\), \(l\text{ even}\), $[f_{m,1}^m]\lor[f_{l,1}^n]=[f_{m+l,1}^{m+n}]$,
    $[f_{m,3}^m]\lor[f_{l,1}^n]=0$;
           \end{enumerate}
           
           \item when \(m,n\ge 4\),   \(1\le k\le m-1\), \(1\le l\le n-1\), \(k,l\text{ even}\),
    \([f_{k,1}^m]\lor[f_{l,1}^n]=[f_{k+l,1}^{m+n}]\).
\end{enumerate}
            
        \item For \(m\) odd, \(n\) even,
\begin{enumerate}
    \item when \(m\ge1\), \(n\ge 2\),
    \begin{enumerate}
        \item $[f_{0,3}^m]\lor[f_{n,1}^n]=[f_{n,3}^{m+n}]$,
       $[f_{0,6}^m]\lor[f_{n,1}^n]= 0$,
        \item \(i=4,6\), \(j=1,3\),
        $[f_{m,i}^m]\lor[f_{n,j}^n]=
        \begin{cases}
            [f_{m+n,4}^{m+n}], & (i,j)=(4,1),\\[2pt]
            [f_{m+n,6}^{m+n}], & (i,j)\in\{(4,3),(6,1)\},\\[2pt]
            0, & (i,j)=(6,3);
        \end{cases}$
    \end{enumerate}

    \item when \(m\ge1\), \(n\ge 4\),
    \begin{enumerate}
        \item  \(1\le l\le n-1\), \(l\text{ even}\), $[f_{0,3}^m]\lor[f_{l,1}^n]=[f_{l,3}^{m+n}]$,
        $[f_{0,6}^m]\lor[f_{l,1}^n]= 0$,
        \item  \(1\le l\le n-1\), \(l\text{ even}\), $[f_{m,4}^m]\lor[f_{l,1}^n]=[f_{m+l,4}^{m+n}]$,
        $[f_{m,6}^m]\lor[f_{l,1}^n]= 0$;
    \end{enumerate}
    
    \item when \(m\ge3\), \(n\ge 2\),
    \begin{enumerate}
        \item \(1\le k\le m\), \(k\text{ even}\),
        $[f_{k,3}^m]\lor[f_{n,1}^n]= [f_{n+k,3}^{m+n}]$,
        \item \(1\le k\le m-1\), \(k\text{ odd}\), $[f_{k,4}^m]\lor[f_{n,1}^n]=[f_{n+k,4}^{m+n}]$,
        $[f_{k,4}^m]\lor[f_{n,3}^n]=0$;
    \end{enumerate}
    
    \item when \(m\ge3\), \(n\ge 4\),
    \begin{enumerate}
        \item \(1\le k\le m\), \(1\le l\le n-1\), \(k,l\text{ even}\),
        \([f_{k,3}^m]\lor[f_{l,1}^n]=[f_{k+l,3}^{m+n}]\),

        \item \(1\le k\le m-1\), \(k\text{ odd}\), \(1\le l\le n-1\), \(l\text{ even}\),
        \([f_{k,4}^m]\lor[f_{l,1}^n]=[f_{k+l,4}^{m+n}]\).
    \end{enumerate}
\end{enumerate}
              
        \item For \(m, n\) odd, 
            \begin{enumerate}
                \item when \(m,n\ge1\),
  $[f_{0,3}^m]\lor[f_{n,4}^n]= -[f_{n,6}^{m+n}]$;
                \item  when \(m\ge1\), \(n\ge3\),
    \begin{enumerate}
        \item \(1\le l\le n-1\), \(l\text{ odd}\),
        $[f_{0,3}^m]\lor[f_{l,4}^n]= -[f_{l,6}^{m+n}]$,
        \item \(1\le l\le n\), \(l\text{ even}\),
        $[f_{m,4}^m]\lor[f_{l,3}^n]= [f_{m+l,6}^{m+n}]$;
    \end{enumerate}
               \item when \(m,n\ge3\), \(1\le k\le m-1\), \(k\text{ odd}\), \(1\le l\le n\), \(l\text{ even}\),
        \([f_{k,4}^m]\lor[f_{l,3}^n]=[f_{k+l,6}^{m+n}]\).
            \end{enumerate}   
    \end{enumerate}
\end{theorem}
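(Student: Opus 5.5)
The plan is to reuse the cochain-level machinery already set up for characteristic two. Namely, Lemma \ref{lem: zero cup product in cohomology} for the vanishing cases, and formula (\ref{equ: general cup product}) for all products in positive degrees. Both hold in every characteristic, since nothing in their derivation used $\rmChar(K)=2$. The only new features are that the representatives listed in Section \ref{sec: Hochschild cohomology groups} now depend on parity, and that signs no longer vanish.

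Several parts come directly from the characteristic-two argument. Part (1)(a) is Lemma \ref{lem: zero cup product in cohomology}(1). Part (2) is Lemma \ref{lem: zero cup product in cohomology}(2),(3). For (1)(b), I will expand $[2f_{0,3}^m-f_{m+1,5}^m]\lor x$ by bilinearity. The products involving $f_{0,3}^m$ or $f_{0,6}^m$ are computed by the analogue of Theorem \ref{thm: char 2 cup product}(3),(4)(a), and the products involving $f_{m+1,i}^m$ vanish by Lemma \ref{lem: zero cup product in cohomology}(4),(5). It therefore suffices to check that, for each non-unit basis element $x$ of $\rmHH^n(A)$ with $\rmChar(K)\neq 2$, the surviving term $[f_{0,3}^m]\lor x$ or $[f_{0,6}^m]\lor x$ is a coboundary. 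For instance, $[f_{0,3}^m]\lor[f_{n,1}^n]=\pm[f_{n,3}^{m+n}]$. For even $m$ the parity of $m+n$ equals that of $n$, and I expect these classes to lie in $\rmIm(\partial^{m+n-1})$ by the explicit image lists, e.g.\ $f_{l,3}^{N}$ with $l$ even and $N$ odd, or $f_{l,6}^N$ with $N$ even and $l<N$. Where a class is not obviously a coboundary, I will use the relation $2f_{0,3}^{N}\equiv -f_{N+1,5}^N$ together with graded commutativity to reduce it.

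For parts (3)–(7), the plan is the same computation as in the proof of Theorem \ref{thm: char 2 cup product}(4)(b). Take $1\le r\le m+n$. By (\ref{equ: general cup product}), $f_{k,i}^m\lor f_{l,j}^n(s_r^{m+n})$ equals $(-1)^{mn+k(n-l)}z_iz_j$ when $r=k+l$ (using the first term when $l=n$, and the sum when $l<n$), and $0$ otherwise. The values on $s_0^{m+n}$ and $s_{m+n+1}^{m+n}$ vanish. The $k=0$ cases (parts (3), (4), (6)(a),(b)(i), (7)(a),(b)(i)) are handled the same way, with $\Psi_m(s_0^m)=s_0^m$. Here the contributing term is $f(s_0^m)$ times $g$ evaluated on the $b$-containing tail, giving coefficient $(-1)^{mn}$. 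I will then insert the parities prescribed in each item. For $m,n$ even, $(-1)^{mn+k(n-l)}=1$. For $m$ odd and $n$ even, with $l$ even or $l=n$, the sign is again $1$. For $m,n$ odd with $k=0$ the sign is $-1$, matching (7)(a),(b)(i). For $m,n$ odd with $k$ odd and $l$ even, the sign $(-1)^{1+k(n-l)}=(-1)^{1+1}=1$ matches (7)(b)(ii),(c).

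Finally, each resulting cochain $z_iz_j$-multiple of $f^{m+n}_{k+l,\ast}$ must be read in $\rmHH^{m+n}(A)$. Products with $z_iz_j\in\{a\cdot a,\ a\cdot ba,\dots\}$ vanish, and the answer is zero whenever the result is a coboundary. Examples are $f_{k+l,6}^{m+n}$ when $m+n$ is odd, and $f_{l,3}^{N}$ paired with the appropriate parity. This accounts for the zero entries such as $[f_{0,6}^0]\lor[f_{n,1}^n]=0$ and $[f_{m,3}^m]\lor[f_{l,1}^n]=0$.

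The main obstacle is this last bookkeeping step. For each item I must match the output $f^{m+n}_{k+l,j}$ against the explicit $\rmIm(\partial^{m+n-1})$ for the correct parity of $m+n$, and identify it with a listed representative. Care is needed in the top index $k+l=m+n$, where $f_{m+n,4}$ and $f_{m+n,6}$ behave differently. Care is also needed in (1)(b), where the mixed representatives $2f_{0,3}^m-f_{m+1,5}^m$ make the reduction modulo coboundaries less immediate.
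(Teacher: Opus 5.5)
Your route is the paper's. The paper's proof is the single sentence that the formulas follow from Lemma~\ref{lem: zero cup product in cohomology} and Theorem~\ref{thm: char 2 cup product}. You use the same idea: the cochain-level facts never use $\rmChar(K)=2$, namely that $f^m_{k,i}\lor f^n_{l,j}$ equals $(-1)^{mn+k(n-l)}z_iz_j$ on $s^{m+n}_{k+l}$ (sign $(-1)^{mn}$ when $k=0$) and vanishes on the other generators. The statement then follows by reading these cochains modulo the characteristic $\neq 2$ coboundaries, and your signs for the individual items are right.

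\textbf{The gap.} It lies in the coboundary identification, which you correctly single out as the step that carries the proof: your examples have the parities reversed. From Section~\ref{sec: Hochschild cohomology groups}, the coboundaries are as follows.
\begin{itemize}
\item In odd degree $N$: $f^N_{l,6}$ ($1\le l\le N-1$), $f^N_{l,3}-f^N_{l+1,4}$ ($l$ odd), $f^N_{N+1,5}$ and $f^N_{N+1,7}$.
\item In even degree $N\ge 2$: $f^N_{l,j}$ ($l$ even, $l\le N-2$, $j=3,4,6$), $f^N_{N,4}$, $f^N_{N,6}$, $2f^N_{0,3}+f^N_{N+1,5}$ and $2f^N_{0,6}+f^N_{N+1,7}$.
\end{itemize}
So $f^N_{l,3}$ with $l$ even and $N$ odd is a nonzero basis class, not a coboundary. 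The same holds for $f^N_{l,6}$ with $N$ even and $l$ odd.

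Used as you state them, your identifications would wrongly kill (5)(a)(i), (5)(b)(i), (6)(a)(i), (6)(b)(i), (6)(c)(i), (6)(d)(i) and all of (7). Likewise, at the top index $f^N_{N,4}$ and $f^N_{N,6}$ do not behave differently from each other: both are coboundaries for $N$ even and both are basis classes for $N$ odd. What changes at the top index is $f^N_{N,3}$ versus $f^N_{l,3}$ when $N$ is even, and $f^N_{N,6}$ versus $f^N_{l,6}$ when $N$ is odd. With the correct lists, every item of the statement checks out.

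\textbf{Part (1)(b).} A shortcut makes parts (4),(5) of Lemma~\ref{lem: zero cup product in cohomology} do all the work. For even $m$, $2f^m_{0,3}+f^m_{m+1,5}\in\rmIm(\partial^{m-1})$, so $[2f^m_{0,3}-f^m_{m+1,5}]=-2[f^m_{m+1,5}]$. Parts (4),(5) then kill its product with every non-unit basis cocycle; the unit is excluded precisely because $c\,e_2=c$. The same argument works for $f^m_{0,6}$ and $f^m_{m+1,7}$.

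Your term-by-term route also works. The classes it actually produces are $f^N_{l,3}$ and $f^N_{l,6}$ with $l,N$ even and $l<N$, and $f^N_{l,6}$ with $N$ odd and $l<N$; all of these are coboundaries. Note, however, that the term $f^m_{0,3}\lor f^n_{n+1,5}$, which arises from $x=[2f^n_{0,3}-f^n_{n+1,5}]$, is not covered by the characteristic-two formulas you cite. It must be checked directly; it vanishes because $ac=0=abc$ in $A$.
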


\begin{proof}
The formulas are derived from Lemma \ref{lem: zero cup product in cohomology} and Theorem \ref{thm: char 2 cup product}.
\end{proof}

%----------------------------------------------------------------------------------------------------------------%

\section{The Hochschild cohomology ring}\label{sec: The Hochschild cohomology ring}
In this section, we determine the ring structure of \(\rmHH^*(A)\). Recall that \(\rmHH^*(A)\) is a graded commutative algebra. When \(\rmChar(K)=2\), this graded commutativity becomes ordinary commutativity. %We first describe the ring structure of \(\rmHH^*(A)\) in this case $\rmChar(K)=2$.
\begin{theorem}\label{thm: char 2 ring}
    When \(\rmChar(K)=2\), \(\rmHH^*(A)\) is isomorphic to the commutative algebra \(K[X]/J\), where \(K[X]\) is the  graded commutative polynomial algebra generated by \( X = \{x_{n,1}, x_{n,2} \mid n \ge 0\} \cup \{x_{n,3}, x_{n,4} \mid n \ge 1\}\) with \(|x_{n,i}| = n\), and  
the ideal \(J\) is generated by the following homogeneous relations:
\begin{itemize}
     \item \(x_{m,1} x_{n,1}\), \(x_{m,1} x_{n,2}\), \(x_{m,2} x_{n,2}\), for \(m,n \ge 0\);
    \item \(x_{m,2} x_{n,4}\),  \(x_{m,2}x_{n,3}-x_{m,1}x_{n,4} \),  for \(m \ge 0\), \(n \ge 1\);
    \item  \(x_{m,4} x_{n,4}\), \(x_{m,1} x_{n,3}- x_{0,1} x_{m+n,3}\),  \(x_{m,1} x_{n,4} - x_{0,1} x_{m+n,4} \), for \(m,n \ge 1\);
    \item  \( x_{m,3} x_{n,3}- x_{1,3} x_{m+n-1,3} \), \(x_{m,3} x_{n,4} - x_{1,3} x_{m+n-1,4} \),
    for \( m\ge 2\), \( n \ge 1\).
\end{itemize}

As a consequence,    the ideal   $\calN$ generated by all homogeneous nilpotent elements has the following linear basis: 
$$\begin{aligned}
    \{x_{n,1}, x_{n,2} \mid n\ge 0\}\cup \{x_{n,4} \mid n\ge 1\}
    \cup\{x_{0,1}(x_{1,3})^l x_{n,3}, x_{0,1}(x_{1,3})^l x_{n,4}, (x_{1,3})^{l+1} x_{n,4} \mid n \ge 1,\ l \ge 0\}.
\end{aligned}$$

\end{theorem}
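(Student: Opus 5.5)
The plan is to write down an explicit surjection $K[X]\to\rmHH^*(A)$ that kills $J$, and then to compare dimensions degree by degree. Theorem~\ref{thm: char 2 cup product} suggests the following assignment. Send $x_{n,1}\mapsto[f_{0,3}^n]$ and $x_{n,2}\mapsto[f_{0,6}^n]$ for $n\ge0$, with the convention that in degree $0$ these are $[f_{0,3}^0]$ and $[f_{0,6}^0]$. Send $x_{n,3}\mapsto[f_{1,1}^n]$ and $x_{n,4}\mapsto[f_{1,4}^n]$ for $n\ge1$. The unit $[f_{0,1}^0+f_{1,2}^0]$ is the image of $1$. Since $\rmChar(K)=2$, $\rmHH^*(A)$ is commutative and all signs in Theorem~\ref{thm: char 2 cup product} equal $1$, so this assignment extends to an algebra map $\theta:K[X]\to\rmHH^*(A)$.

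\textbf{Relations.} Each generator of $J$ is killed by $\theta$; this is read off directly from Theorem~\ref{thm: char 2 cup product}.
\begin{enumerate}
\item Products among $x_{\bullet,1},x_{\bullet,2}$ vanish by part (2).
\item $x_{m,2}x_{n,4}\mapsto0$ and $x_{m,2}x_{n,3}=x_{m,1}x_{n,4}\mapsto[f_{1,6}^{m+n}]$ by parts (3) and (4)(a).
\item $x_{m,4}x_{n,4}\mapsto0$ by (4)(b), since $(4,4)$ falls in the ``otherwise'' case.
\item Both $x_{m,1}x_{n,3}$ and $x_{0,1}x_{m+n,3}$ map to $[f_{1,3}^{m+n}]$, and the $x_{\bullet,4}$ analogue maps to $[f_{1,6}^{m+n}]$.
\item By (4)(b), $[f_{k,1}^m]\lor[f_{l,j}^n]=[f_{k+l,j}^{m+n}]$ for $j\in\{1,4\}$. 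Hence both $x_{m,3}x_{n,3}$ and $x_{1,3}x_{m+n-1,3}$ map to $[f_{2,1}^{m+n}]$, and similarly for $x_{m,3}x_{n,4}$ and $x_{1,3}x_{m+n-1,4}$.
\end{enumerate}
So $\theta$ factors through $K[X]/J$.

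\textbf{Surjectivity.} Iterating (4)(b) gives, for $1\le k\le N$,
\[
[f_{k,1}^N]=\theta(x_{1,3}^{k-1}x_{N-k+1,3}),\qquad [f_{k,4}^N]=\theta(x_{1,3}^{k-1}x_{N-k+1,4}).
\]
Multiplying by $x_{0,1}$ and using part (3) gives
\[
[f_{k,3}^N]=\theta(x_{0,1}x_{1,3}^{k-1}x_{N-k+1,3}),\qquad [f_{k,6}^N]=\theta(x_{0,1}x_{1,3}^{k-1}x_{N-k+1,4}).
\]
Together with $[f_{0,3}^N]$, $[f_{0,6}^N]$ and the degree-$0$ classes, every basis element of $\rmHH^N(A)$ from Section~\ref{sec: Hochschild cohomology groups} lies in the image. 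Hence $\theta$ is surjective.

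\textbf{Injectivity by dimension count.} The main work is to show that the relations of $J$ reduce every monomial in degree $N\ge1$ to the span of the following $4N+2$ monomials:
\[
x_{N,1},\quad x_{N,2},\quad x_{1,3}^{l}x_{N-l,3},\quad x_{1,3}^{l}x_{N-l,4},\quad x_{0,1}x_{1,3}^{l}x_{N-l,3},\quad x_{0,1}x_{1,3}^{l}x_{N-l,4},\qquad 0\le l\le N-1.
\]
In degree $0$ the spanning monomials are $1,x_{0,1},x_{0,2}$.

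I will do this by a rewriting argument.
\begin{enumerate}
\item Any monomial containing two factors from $\{x_{\bullet,1},x_{\bullet,2}\}$ vanishes.
\item A factor $x_{m,2}$ with $m\ge0$ multiplied by some $x_{\bullet,3}$ is replaced by $x_{m,1}x_{\bullet,4}$; multiplied by $x_{\bullet,4}$ it vanishes.
\item A factor $x_{m,1}$ with $m\ge1$, multiplied by $x_{\bullet,3}$ or $x_{\bullet,4}$, is pushed down to $x_{0,1}$, raising the degree of that other factor.
\item Among the $x_{\bullet,3}$ and $x_{\bullet,4}$ factors, at most one $x_{\bullet,4}$ survives. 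All but one of the remaining factors are turned into $x_{1,3}$ using the last group of relations.
\end{enumerate}
Each step strictly decreases a suitable lexicographic measure: first the number of factors of the form $x_{\bullet,2}$, then the sum of indices of the non-distinguished factors.

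This bounds $\dim (K[X]/J)_N$ by $4N+2$ for $N\ge1$ and by $3$ for $N=0$. These numbers equal $\dim_K\rmHH^N(A)$ from Section~\ref{sec: Hochschild cohomology groups}, so $\theta$ is an isomorphism. This termination and normal-form bookkeeping is where I expect the care to be needed, especially the interaction of $x_{m,2}$ with $x_{n,3}$.

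\textbf{The ideal $\calN$.} Let $\calN'$ be the span of the listed monomials other than the pure ones $x_{1,3}^{l}x_{n,3}$; this is exactly the span given in the statement.
\begin{enumerate}
\item $\calN'$ is an ideal, since by the normal forms its product with any generator stays inside it.
\item Every element of $\calN'$ has square zero. Each such monomial contains $x_{0,1}$, $x_{n,2}$, $x_{n,4}$ or an $x_{n,1}$, and the relations kill $x_{\bullet,1}x_{\bullet,1}$, $x_{\bullet,1}x_{\bullet,2}$, $x_{\bullet,2}x_{\bullet,2}$, $x_{\bullet,2}x_{\bullet,4}$, $x_{\bullet,4}x_{\bullet,4}$, and $x_{\bullet,1}x_{\bullet,4}\equiv x_{\bullet,2}x_{\bullet,3}$. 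Hence $\calN'\subseteq\calN$.
\item For the reverse inclusion, the quotient $K[X]/(J+\calN')$ has basis $1$ and $x_{1,3}^{l}x_{n,3}$. It corresponds to the classes $[f_{k,1}^N]$, which multiply as $[f_{k,1}^m][f_{l,1}^n]=[f_{k+l,1}^{m+n}]$. This makes the quotient isomorphic to $K\oplus K[u,v]v$ via $x_{1,3}^{l}x_{n,3}\mapsto u^{n-1}v^{l+1}$, with $u\mapsto x_{1,3}$ and $v\mapsto x_{1,3}$ placed in suitable bidegrees. This ring is reduced, so no homogeneous nilpotent element can have a nonzero image there.
\end{enumerate}
Therefore $\calN=\calN'$, with the stated basis.
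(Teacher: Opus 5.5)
Your proposal is correct, and its skeleton is the paper's: the same assignment $x_{n,1}\mapsto[f_{0,3}^n]$, $x_{n,2}\mapsto[f_{0,6}^n]$, $x_{n,3}\mapsto[f_{1,1}^n]$, $x_{n,4}\mapsto[f_{1,4}^n]$, the relations of $J$ checked against Theorem~\ref{thm: char 2 cup product}, and surjectivity via iterated products. The treatment of $\calN$ also matches: it is the ideal generated by the $x_{\bullet,1},x_{\bullet,2},x_{\bullet,4}$, whose quotient is the reduced ring $K\oplus K[x,y]y$.

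The one real difference is the dimension step. The paper adjoins the commutativity relations to $J$ and asserts that the resulting set $\mathscr{G}$ is a Gr\"obner--Shirshov system in $K\langle X\rangle$, which requires every overlap ambiguity to resolve. From this it reads off an exact basis of $K[X]/J$. You prove only the spanning half, by rewriting in the commutative ring. Linear independence then comes for free from the surjection onto $\rmHH^N(A)$, whose dimension is $4N+2$.

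Your route avoids the confluence check entirely. It still yields the basis you need in the $\calN$ part, namely that $K[X]/(J+\calN')$ has basis $1$ and the $x_{1,3}^{l}x_{n,3}$. The paper's route instead gives the Hilbert series of $K[X]/J$ directly from the presentation, independent of the cohomology computation, but the theorem does not need that.

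One small slip: in the last step the correspondence is $x_{1,3}\mapsto v$ and $x_{2,3}\mapsto uv$, as your displayed formula $x_{1,3}^{l}x_{n,3}\mapsto u^{n-1}v^{l+1}$ already says, not ``$u\mapsto x_{1,3}$, $v\mapsto x_{1,3}$''.
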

\begin{proof}
Define a graded map
\[
\phi:X\longrightarrow \rmHH^*(A)
\]
by
\[
\begin{aligned}
x_{n,1} &\longmapsto [f_{0,3}^n], \quad n \ge 0,\\
x_{n,2} &\longmapsto [f_{0,6}^n], \quad n \ge 0,\\
x_{n,3} &\longmapsto [f_{1,1}^n], \quad n \ge 1,\\
x_{n,4} &\longmapsto [f_{1,4}^n], \quad n \ge 1.
\end{aligned}
\]
By the universal property of the polynomial algebra \(K[X]\), the map extends uniquely to a graded \(K\)-algebra homomorphism
\[
\iota:K[X]\longrightarrow \rmHH^*(A)
\]
satisfying
\(
\iota(1_K)= 1_{\rmHH^*(A)}=[f_{0,1}^0+f_{1,2}^0]\).

\iffalse
    We define a homomorphism of graded $K$-algebras
\[
\begin{aligned}
\iota \colon K[X] &\longrightarrow \rmHH^*(A),\\
1_K &\longmapsto [f_{0,1}^0 + f_{1,2}^0] \in \rmHH^0(A),\\
x_{n,1} &\longmapsto [f_{0,3}^n] \in \rmHH^n(A), \quad n \ge 0,\\
x_{n,2} &\longmapsto [f_{0,6}^n] \in \rmHH^n(A), \quad n \ge 0,\\
x_{n,3} &\longmapsto [f_{1,1}^n] \in \rmHH^n(A), \quad n \ge 1,\\
x_{n,4} &\longmapsto [f_{1,4}^n] \in \rmHH^n(A), \quad n \ge 1,
\end{aligned}
\]
and extend multiplicatively: $\iota(x_{m,i}x_{n,j}) := \iota(x_{m,i}) \lor \iota(x_{n,j})$.
\fi

For \( n\ge 1\), \(\{[f_{0,j}^n],j=3,6; [f_{l,j}^n], 1\le l \le n, j=1,3,4,6\} \) forms a \(K\)-basis of \(\rmHH^n(A) \).
For \(n\ge 1\), \( [f_{0,3}^0] \lor [f_{1,1}^n] = [f_{1,3}^n], [f_{0,6}^0] \lor [f_{1,1}^n] = [f_{1,6}^n]\).
For \( n\ge 2\), let $l$ satisfy \( 2\le l\le n\). Since \(\rmChar(K)=2\), all the coefficients appearing in the formulas of Theorem \ref{thm: char 2 cup product} are equal to \(1\). Then
$$\begin{aligned}
    &\underbrace{[f_{1,1}^1] \lor \cdots \lor [f_{1,1}^1]}_{l-1} \lor [f_{1,1}^{n-l+1}] = [f_{l,1}^n], \\
    & [f_{0,3}^0] \lor  [f_{l,1}^n] =  [f_{l,3}^n],\quad [f_{0,6}^0] \lor  [f_{l,1}^n] =  [f_{l,6}^n],
\end{aligned}$$
$$\underbrace{[f_{1,1}^1] \lor \cdots \lor [f_{1,1}^1]}_{l-1} \lor [f_{1,4}^{n-l+1}]= [f_{l-1,1}^{l-1}] \lor [f_{1,4}^{n-l+1}] = [f_{l,4}^n].$$

Consequently, the above basis elements are contained in the \(K\)-subalgebra generated by \( \{[f_{0,1}^0 + f_{1,2}^0]; [f_{0,j}^n],j=3,6, n\ge 0; [f_{1,j}^n], j=1,4, n\ge 1\} \). Hence, these elements generate \(\rmHH^*(A)\) as a \(K\)-algebra, and the homomorphism \(\iota\) is surjective.

For \( m,n \ge 0\) and \( k,l=3,6\), we have \( [f_{0,k}^m] \lor [f_{0,l}^n]=0 \), which implies 
\[ \iota (x_{m,1}x_{n,1})=\iota (x_{m,1}x_{n,2})=\iota (x_{m,2}x_{n,2})=0.\]

For \( m \ge 0, n \ge 1\), we have \( [f_{0,6}^m] \lor [f_{1,4}^n]=0 \) and \( [f_{0,6}^m] \lor [f_{1,1}^n]=[f_{1,6}^{m+n}]=[f_{0,3}^m] \lor [f_{1,4}^n] \), which implies 
\[ \iota (x_{m,2}x_{n,4})=\iota (x_{m,2}x_{n,3}-x_{m,1}x_{n,4})=0. \]

For \( m,n \ge 1\), we have \( [f_{1,4}^m] \lor [f_{1,4}^n]=0 \), \( [f_{0,3}^{m}] \lor [f_{1,1}^{n}]= [f_{1,3}^{m+n}] = [f_{0,3}^{0}] \lor [f_{1,1}^{m+n}]\), and \( [f_{0,3}^{m}] \lor [f_{1,4}^{n}]= [f_{1,6}^{m+n}] = [f_{0,3}^0] \lor [f_{1,4}^{m+n}] \), which implies
\[\iota (x_{m,4}x_{n,4})=\iota (x_{m,1}x_{n,3}-x_{0,1}x_{m+n,3})=\iota(x_{m,1}x_{n,4}-x_{0,1}x_{m+n,4})=0.\]

For \( m \ge 2, n \ge 1\), we have \( [f_{1,1}^{m}] \lor [f_{1,1}^{n}]= [f_{2,1}^{m+n}] = [f_{1,1}^{1}] \lor [f_{1,1}^{m+n-1}] \) and \( [f_{1,1}^{m}] \lor [f_{1,4}^{n}]= [f_{2,4}^{m+n}] = [f_{1,1}^{1}] \lor [f_{1,4}^{m+n-1}]\), which implies 
\[\iota (x_{m,3}x_{n,3}-x_{1,3}x_{m+n-1,3})=\iota(x_{m,3}x_{n,4}-x_{1,3}x_{m+n-1,4})=0. \] 
Hence, \( J \subseteq \Ker(\iota)\), and \(\iota\) induces a surjective graded \(K\)-algebra homomorphism
\[ \bar{\iota} \colon K[X]/J \longrightarrow \rmHH^*(A).\]

Let \(\mathcal{C}=\{1_K\}\cup \{a_1a_2\cdots a_n\mid n\ge 1, a_i\in X\}\) be the multiplicative basis of \(K\langle X \rangle\).
Equip  \(\mathcal{C}\) with the left length-lexicographic order induced by
\(x_{0,1}<x_{1,1}<x_{2,1}<\cdots<x_{0,2}<x_{1,2}<x_{2,2}<\cdots <x_{1,3}<x_{2,3}<x_{3,3}<\cdots<x_{1,4}<x_{2,4}<x_{3,4}<\cdots\).
 Let \(\mathscr{G}\) be the set consisting of the defining relations of \(J\) together with the following commutativity relations: 
  \begin{itemize}
    \item \(x_{m,2}x_{n,1}-x_{n,1}x_{m,2}\), for \(m,n\ge 0\);
    \item \(x_{m,3}x_{n,1}-x_{n,1}x_{m,3}\), \(x_{m,4}x_{n,1}-x_{n,1}x_{m,4}\), \(x_{m,3}x_{n,2}-x_{n,2}x_{m,3}\), \(x_{m,4}x_{n,2}-x_{n,2}x_{m,4}\), for \(m\ge1\), \(n\ge 0\);
     \item  \(x_{m,4}x_{n,3}-x_{n,3}x_{m,4}\), for \(m, n\ge 1\).
 \end{itemize}

It can be easily seen that the set \(\mathscr{G}\) is tip reduced and that each element of \(\mathscr{G}\) is uniform.
Moreover, a direct computation shows that all overlap relations of \(\mathscr{G}\) reduce to \(0\).
Therefore, by Theorem \ref{thm: GS basis}, \(\mathscr{G}\) is a Gr\"obner-Shirshov system for \(\langle\mathscr{G}\rangle\) in \(K\langle X\rangle\). Hence, the set
\[\begin{aligned}
    &\{1_K\}\cup \{x_{n,1}, x_{n,2}\mid n \ge 0\}\cup \{x_{n,3}, x_{n,4}\mid n \ge 1\}\cup\\
&\quad\quad\quad\{x_{0,1}(x_{1,3})^l x_{n,3}, x_{0,1}(x_{1,3})^l x_{n,4}, (x_{1,3})^{l+1} x_{n,3},  (x_{1,3})^{l+1} x_{n,4} \mid n \ge 1, l \ge 0\}.
\end{aligned}
\]
forms a $K$-basis of \(K\langle X \rangle/\langle\mathscr{G} \rangle \cong K[X]/J \). 

\iffalse
The set \( \{1_K,x_{0,1},x_{0,2}\} \) forms a linear basis of \( ( K[X]/J)_0 \). The set \(\{x_{1,1},  x_{1,2},  x_{1,3}, x_{1,4},  x_{0,1}x_{1,3},  x_{0,1}x_{1,4}\} \) forms a linear basis of \( ( K[X]/J)_1 \).
When \(n\ge 2\),
\[ \begin{aligned}
    \{ &x_{n,1},   x_{n,2},   x_{n,3},   x_{n,4}\} \\
    &\cup \{x_{0,1}(x_{1,3})^l x_{n-l,3},  x_{0,1}(x_{1,3})^l x_{n-l,4} \mid 0 \le l \le n-1 \} \\
    &\cup \{(x_{1,3})^l x_{n-l,3},  (x_{1,3})^l x_{n-l,4} \mid 1\le l \le n-1\}
\end{aligned}
\]
forms a linear basis of \( ( K[X]/J)_n \).
\fi
Counting elements in this basis by degree gives
\[
\dim (K[X]/J)_0 = 3 = \dim \rmHH^0(A)
\]
and 
\[
\dim (K[X]/J)_n = 4n+2 = \dim \rmHH^n(A)
\]
for $n \ge 1$. Therefore, \(\bar{\iota} \) is an isomorphism of graded \(K\)-algebras.

Let $L_1$ be the set
\[\begin{aligned}
    &\{x_{n,1},  x_{n,2}\mid n \ge 0\}\cup \{  x_{n,4}\mid n \ge 1\} \cup\\
&\quad\quad\{x_{0,1}(x_{1,3})^l x_{n,3},  x_{0,1}(x_{1,3})^l x_{n,4},  (x_{1,3})^{l+1} x_{n,4} \mid n \ge 1,  l \ge 0\}.
\end{aligned}
\] 
Let $I_1$ be the ideal generated by $x_{n,1}$, $x_{n,2}$ ($n\ge0$), and $x_{n,4}$ ($n\ge1$). These generators are homogeneous and nilpotent, so
$I_1\subseteq\calN$. Moreover, by multiplying these generators of $I_1$ by the basis elements of $K[X]/J$, we obtain $I_1\subseteq \Span_K(L_1)$, and thus $I_1= \Span_K(L_1)$. 

Consider the quotient
\[ 
    R_1:=(K[X]/J)/I_1
    \cong K[x_{n,3} \mid n\ge 1]/\langle x_{m,3}x_{n,3}-x_{1,3}x_{m+n-1,3} \mid m\ge 2, n \ge 1 \rangle.
\]
This algebra $R_1$ has a $K$-basis \(L_2=\{1_K\}\cup\{(x_{1,3})^l x_{n,3}\mid n\ge1, l\ge0\}\), with multiplication
\[(x_{1,3})^kx_{m,3}\cdot (x_{1,3})^lx_{n,3} = (x_{1,3})^{k+l+1}x_{m+n-1,3}.\]  
 The displayed multiplication identifies $R_1$ with the subalgebra $K\oplus K[x,y]y$ of the polynomial ring $K[x,y]$ via the assignment
    \[
    1_K\mapsto 1_K,\qquad (x_{1,3})^l x_{n,3}\mapsto x^{n-1}y^{l+1}, n\geq 1, l\geq 0,
    \] 
where \(|x|=|y|=1\). Note that $x_{1,3}$ (resp. $x_{2,3}$) is sent to $y$ (resp. $xy$). Consequently, $R_1$
has no nonzero homogeneous nilpotent elements. It follows that every homogeneous nilpotent element of $K[X]/J$ belongs to $I_1$, and hence
$\calN=I_1=\Span_K(L_1)$.
\end{proof}

\begin{corollary}[{\cite[Section 3.1]{Xu08}}] When $\rmChar (K)=2$, there exists  an isomorphism of graded algebras:
\[
\rmHH^*(A)/\mathcal{N}
\cong K\oplus K[x,y]y,
\]
with \(|x|=|y|=1\). 
    In particular, \(\rmHH^*(A)/\mathcal{N}\) is not finitely generated.
\end{corollary}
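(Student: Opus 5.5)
The corollary should follow almost immediately from Theorem \ref{thm: char 2 ring} and its proof. That proof already established three facts: $\rmHH^*(A)\cong K[X]/J$; $\calN=I_1=\Span_K(L_1)$; and the quotient $R_1=(K[X]/J)/I_1$ is identified with the subalgebra $K\oplus K[x,y]y$ of $K[x,y]$. Here $|x|=|y|=1$, and the identification is $1_K\mapsto 1_K$ and $(x_{1,3})^l x_{n,3}\mapsto x^{n-1}y^{l+1}$. So I would compose the isomorphism $\bar\iota^{-1}$ with the quotient by $\calN$. This gives $\rmHH^*(A)/\calN\cong R_1\cong K\oplus K[x,y]y$.

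The only points I would make explicit concern the identification of $R_1$ with $K\oplus K[x,y]y$.

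\emph{Well-defined and multiplicative.} Under the assignment, the product $(x_{1,3})^k x_{m,3}\cdot (x_{1,3})^l x_{n,3}=(x_{1,3})^{k+l+1}x_{m+n-1,3}$ goes to $x^{m+n-2}y^{k+l+2}$. This equals $x^{m-1}y^{k+1}\cdot x^{n-1}y^{l+1}$, so the map respects multiplication.

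\emph{Bijective.} The monomials $x^{n-1}y^{l+1}$ with $n\ge1$, $l\ge0$ are exactly the monomials $x^py^q$ with $q\ge1$. These form a basis of $K[x,y]y$, so the basis $L_2$ maps bijectively onto a basis of $K\oplus K[x,y]y$.

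\emph{Grading.} Since $|x_{n,3}|=n$, the element $(x_{1,3})^lx_{n,3}$ has degree $l+n=(n-1)+(l+1)$, so the map preserves degree.

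For non-finite generation, I would argue in $S=K\oplus K[x,y]y$. Let $S_+$ be its augmentation ideal. Suppose $S$ were generated by finitely many homogeneous elements, of degrees at most $d$. Then $S_+/S_+^2$ would be spanned by their images, so it would vanish in all degrees above $d$.

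However, $S_+^2\subseteq K[x,y]y^2$. So for every $p$, the element $x^py$ is not in $S_+^2$. Hence $S_+/S_+^2$ is nonzero in every degree $p+1$, which is a contradiction.

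No real obstacle is expected. The only thing to be careful about is that the isomorphism $R_1\cong K\oplus K[x,y]y$ is graded. This is exactly what the degree check above verifies.
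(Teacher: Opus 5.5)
Your proof is correct and follows the paper's route: the corollary is read off from the end of the proof of Theorem \ref{thm: char 2 ring}, where $\calN=I_1$ and $R_1=(K[X]/J)/I_1$ is identified with $K\oplus K[x,y]y$ via $(x_{1,3})^l x_{n,3}\mapsto x^{n-1}y^{l+1}$, and your checks of multiplicativity, bijectivity and degree are exactly what that identification needs. Your $S_+/S_+^2$ argument also supplies the non-finite-generation claim, which the paper states without proof; the only detail to add is that a finitely generated graded algebra is generated by the finitely many homogeneous components of any finite generating set.
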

\iffalse
\begin{proof}
The assignment
    \[
    1_K\mapsto 1_K,\qquad (x_{1,3})^l x_{n,3}\mapsto x^{n-1}y^{l+1}, n\geq 1, l\geq 0
    \]
    induces an isomorphism of graded algebras
\[
R\cong K\oplus K[x,y]y,
\]
with \(|x|=|y|=1\). Hence, $\rmHH^*(A)/\mathcal{N}
\cong K\oplus K[x,y]y$.  

Note that $x_{1,3}$ (resp. $x_{2,3}$) is sent to $y$ (resp. $xy$).
\end{proof}
\fi
\iffalse
\begin{proof}
    By Theorem \ref{thm: char 2 ring},
    \[ 
         \rmHH^*(A)/\mathcal{N} \cong K[x_{n,3} \mid n\ge 1]/\langle x_{m,3}x_{n,3}-x_{1,3}x_{m+n-1,3} \mid m\ge 2, n \ge 1 \rangle.
    \]
   This algebra is spanned by \(\{1_K\}\cup\{(x_{1,3})^l x_{n,3}\mid n\ge1, l\ge0\}\) with multiplication
\[(x_{1,3})^kx_{m,3}\cdot (x_{1,3})^lx_{n,3} = (x_{1,3})^{k+l+1}x_{m+n-1,3}.\]
    The assignment
    \[
    1_K\mapsto 1_K,\qquad (x_{1,3})^l x_{n,3}\mapsto x^{n-1}y^{l+1}
    \]
    induces an isomorphism of graded algebras
\[
\rmHH^*(A)/\mathcal{N}
\cong K\oplus K[x,y]y,
\]
with \(|x|=|y|=1\).  
\end{proof}
\fi
\begin{theorem}\label{thm: char neq 2 ring}
     When $\rmChar(K) \neq 2$, $\rmHH^*(A)$ is isomorphic to the    graded commutative algebra \(K [X]/J\), where \(  K [X]\) is the  graded commutative polynomial algebra  generated by \(X =\{x_{n,1}, x_{n,2}\mid
     n \ge 0\}\cup \{x_{n,3}\mid n \ge 1\}\) with \(|x_{n,i}| = n\) and the ideal \(J\) is generated by the following relations:
\begin{itemize}
     \item \(x_{m,1} x_{n,1}\), \(x_{m,2} x_{n,1}\), \(x_{m,2} x_{n,2}\),  for \(m,n \ge 0\);

    \item \(x_{m,3} x_{n,2} \),  for $m \ge 1$, $n \ge 0$;
    \item  \(x_{m,3} x_{0,1} \),  for \(m \ge 3\);
    \item  \(x_{m,3} x_{n,1} \),  for \(m\ge 1\), \(n \ge 2\), \(n \text{ even}\);
    \item  \(x_{m,3} x_{n,1} -x_{1,3} x_{m+n-1,1} \),  for \(m\ge 3\), \(m \text{ odd}\),  \(n \ge 1\), \(n \text{ odd}\);
    \item \(x_{m,3} x_{n,1} -x_{2,3} x_{m+n-2,1} \),  for \(m\ge 4\), \(m \text{ even}\),  \(n \ge 1\), \(n \text{ odd}\);
    \item \(x_{m,3} x_{n,3} \),  for \(m,n \text{ odd}\), \(m\le n\);

    \item  \(x_{m,3} x_{n,3} - x_{1,3} x_{m+n-1,3} \),  for \(n\ge m \ge 3\), \(m \text{ odd}\), \(n \text{ even}\);
     \item \(x_{m,3} x_{n,3} - x_{1,3} x_{m+n-1,3}\), for \(n\ge m\ge 2\), \(m \text{ even}\), \(n \text{ odd}\);
    \item \(x_{m,3} x_{n,3} - x_{2,3} x_{m+n-2,3} \),  for \(n\ge m \ge 4\), \(m \text{ even}\), \(n \text{ even}\).
   
\end{itemize}

As a consequence, the   ideal   $\calN$ generated by all homogeneous nilpotent elements has the following linear basis: 
$$\begin{aligned}
    &\{x_{n,1}, x_{n,2}\mid n\ge 0\} \cup \{x_{n,3}\mid n \text{ odd}\}\cup\{x_{1,3}(x_{2,3})^lx_{0,1}, (x_{2,3})^{l+1}x_{0,1}\mid l\ge 0\}\cup \\
&\quad\quad\quad\{x_{1,3}(x_{2,3})^l x_{n,1},  (x_{2,3})^{l+1} x_{n,1} \mid n \ge 1, n\text{ odd}, l \ge 0\} \cup \{x_{1,3}(x_{2,3})^l x_{n,3} \mid n \ge 2, n\text{ even}, l \ge 0\}.
\end{aligned}$$
 
\end{theorem}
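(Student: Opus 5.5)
We follow the same template as the proof of Theorem \ref{thm: char 2 ring}. First we fix a graded assignment on generators:
\[
x_{n,1}\mapsto [f_{0,3}^n]\ (n\ge0,\ \text{using } 2f_{0,3}^n-f_{n+1,5}^n \text{ for even } n\ge2),\qquad
x_{n,2}\mapsto [f_{0,6}^n]\ (\text{resp. } 2f_{0,6}^n-f_{n+1,7}^n \text{ for even } n\ge 2),
\]
and
\[
x_{n,3}\mapsto [f_{n,1}^n] \text{ for } n \text{ even},\qquad x_{n,3}\mapsto [f_{n,4}^n] \text{ for } n \text{ odd}.
\]
This matches the bases of $\rmHH^n(A)$ computed in Section \ref{sec: Hochschild cohomology groups}. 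Since $K[X]$ is the free graded commutative algebra on $X$, and $\rmHH^*(A)$ is graded commutative, this extends to a graded algebra map $\iota:K[X]\to\rmHH^*(A)$ sending $1$ to $[f_{0,1}^0+f_{1,2}^0]$. One point needs care: odd generators must square to zero. For $x_{n,3}$ with $n$ odd this is encoded in the relation $x_{m,3}x_{n,3}$ for $m,n$ odd. It is consistent with Theorem \ref{thm: char neq 2 cup product}(2), which gives $[f_{m,4}^m]\lor[f_{n,4}^n]=0$.

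Surjectivity comes from iterating the products in Theorem \ref{thm: char neq 2 cup product}. The even-degree classes $[f_{l,1}^n]$ with $l$ even are obtained as $[f_{2,1}^2]^{\lor(l/2-1)}\lor[f_{n-l+2,1}^{n-l+2}]$, using part (5). The classes $[f_{l,3}^n]$ come from multiplying by $[f_{0,3}^0]$ or $[f_{0,3}^m]$, using parts (3) and (6). The odd-type classes $[f_{l,4}^n]$ come from $[f_{l,4}^{l}]\lor[f_{n-l,1}^{n-l}]$ together with the iterated even products, using part (6)(c),(d). The classes $[f_{l,6}^n]$ come from $[f_{0,3}^0]\lor[f_{l,4}^n]$ (part (4)) or from $[f_{k,6}^m]\lor[f_{l,1}^n]$ (part (5)). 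Next we check $J\subseteq\Ker\iota$ one relation at a time, reading each off Theorem \ref{thm: char neq 2 cup product}. For example, $x_{m,3}x_{n,1}-x_{1,3}x_{m+n-1,1}$ with $m,n$ odd reduces to part (7)(a),(b) together with graded commutativity. The relation $x_{m,3}x_{n,2}=0$ follows from (1)(b), (5)(a)(ii) and (6)(a)(i) applied to $f_{0,6}$. Signs deserve attention, and graded commutativity is used to reorder factors.

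For injectivity we imitate the char-$2$ argument. We work in $K\langle X\rangle$ with a length-lexicographic order where $x_{0,1}<x_{1,1}<\cdots<x_{0,2}<\cdots<x_{1,3}<x_{2,3}<\cdots$. Let $\mathscr{G}$ consist of the defining relations of $J$, the graded commutativity relations $x y-(-1)^{|x||y|}yx$, and $x^2$ for $x$ odd. We check that $\mathscr{G}$ is tip reduced and uniform and that all overlaps reduce to $0$, and then apply Theorem \ref{thm: GS basis}. This yields a normal-form basis whose nonconstant part is
\[
\{x_{n,1},x_{n,2}\}\cup\{x_{n,3}\}\cup\{(x_{2,3})^l x_{n,3}\}\cup\{x_{1,3}(x_{2,3})^l x_{n,3}\ (n \text{ even})\}\cup\{\text{words with } x_{0,1},x_{n,1}\},
\]
with the constraints listed in the statement. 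We then count this basis in each degree and compare with $\dim_K\rmHH^n(A)=n+3$, which shows $\bar\iota$ is an isomorphism. \textbf{The main obstacle} is this step: the overlap verification and degree count have many parity cases, and the tips must be chosen so that relations like $x_{m,3}x_{n,3}-x_{2,3}x_{m+n-2,3}$ have tip $x_{m,3}x_{n,3}$. The order placing $x_{1,3}<x_{2,3}<\cdots$ achieves this.

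Finally we identify $\calN$. Let $I_1$ be the ideal generated by the $x_{n,1}$, the $x_{n,2}$, and the $x_{n,3}$ with $n$ odd. These generators are nilpotent, since each squares to zero by the relations. Multiplying the generators by normal words shows that $I_1$ is spanned by the listed set. The quotient is
\[
K[x_{n,3}\mid n \text{ even}]/\langle x_{m,3}x_{n,3}-x_{2,3}x_{m+n-2,3}\rangle .
\]
It has basis $(x_{2,3})^l x_{2k,3}$, and the map $(x_{2,3})^l x_{2k,3}\mapsto x^{2k-2}y^{2l+2}$ embeds it into $K[x,y]$. Here the degrees of $x$ and $y$ are $1$, and every exponent is even, so no signs arise. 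Hence the quotient is reduced and $\calN=I_1$.
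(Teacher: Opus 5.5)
Your overall plan is the paper's: assign cohomology classes to the generators, check $J\subseteq\Ker\iota$, prove surjectivity, obtain a Gr\"obner--Shirshov normal form, compare with $\dim_K\rmHH^n(A)=n+3$, then identify $\calN$ through the reduced quotient. However, the images you choose for the generators $x_{n,3}$ are wrong, and this breaks both surjectivity and well-definedness.

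\textbf{Surjectivity fails.} You send $x_{n,3}$ to $[f_{n,1}^n]$ for $n$ even and to $[f_{n,4}^n]$ for $n$ odd. These classes are decomposable: by Theorem~\ref{thm: char neq 2 cup product}(5)(a)(ii) and (6)(a)(ii),
\[
[f_{n,1}^n]=[f_{2,1}^2]^{\lor n/2},\qquad [f_{n,4}^n]=[f_{1,4}^1]\lor[f_{n-1,1}^{n-1}].
\]
All your other generators map to nilpotent classes, and $\iota(x_{2k,3})=\iota(x_{2,3})^k$. So if $\iota$ were surjective, $\rmHH^*(A)/\calN$ would be generated by the single element $[f_{2,1}^2]$, which contradicts Theorem A. Concretely, $[f_{2,1}^4]$ and $[f_{1,4}^3]$ are not in the image; in degree $3$, both $x_{3,3}$ and $x_{1,3}x_{2,3}$ go to $[f_{3,4}^3]$. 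The error is in your surjectivity formulas. The product $[f_{2,1}^2]^{\lor(l/2-1)}\lor[f_{n-l+2,1}^{n-l+2}]$ equals $[f_{n,1}^n]$, not $[f_{l,1}^n]$; the second factor must be $[f_{2,1}^{n-l+2}]$. Likewise $[f_{l,4}^l]\lor[f_{n-l,1}^{n-l}]=[f_{n,4}^n]$.

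\textbf{Well-definedness fails.} Your assignment also violates $J\subseteq\Ker\iota$:
\begin{itemize}
\item $x_{3,3}x_{0,1}\in J$ maps to $[f_{0,3}^0]\lor[f_{3,4}^3]=[f_{3,6}^3]\neq 0$, by part (4)(a).
\item $x_{4,3}x_{0,1}$ maps to $[f_{4,3}^4]\neq0$, by part (3)(a).
\item For $m\ge3$ with $m,n$ odd, $x_{m,3}x_{n,1}-x_{1,3}x_{m+n-1,1}$ maps to $[f_{m,6}^{m+n}]-[f_{1,6}^{m+n}]\neq0$, by part (7)(a).
\end{itemize}
Conversely, $x_{4,3}-x_{2,3}^2$ lies in the kernel, although $x_{4,3}$ and $x_{2,3}^2$ are independent in $K[X]/J$. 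Your final identification $(x_{2,3})^l x_{2k,3}\mapsto x^{2k-2}y^{2l+2}$ tacitly treats $x_{2k,3}$ as $x^{2k-2}y^2$, that is, as $[f_{2,1}^{2k}]$, not as your $[f_{2k,1}^{2k}]$, which corresponds to $y^{2k}$.

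\textbf{Fix.} Use the paper's assignment: $x_{n,3}\mapsto[f_{1,4}^n]$ for $n$ odd and $x_{n,3}\mapsto[f_{2,1}^n]$ for $n$ even. With it, every relation of $J$ can be read off Theorem~\ref{thm: char neq 2 cup product}, and the rest of your plan goes through.

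\textbf{A smaller, fixable point.} You copy the char-$2$ order with $x_{\ast,1}<x_{\ast,2}<x_{\ast,3}$. Under that order, $x_{m,3}x_{n,1}-x_{1,3}x_{m+n-1,1}$ has the same tip $x_{m,3}x_{n,1}$ as the commutativity relation for $x_{m,3}$ and $x_{n,1}$, so your $\mathscr{G}$ is not tip reduced. The paper instead uses $x_{1,3}<x_{2,3}<\cdots<x_{0,2}<x_{1,2}<\cdots<x_{0,1}<x_{1,1}<\cdots$. That order matches the way $J$ is written and yields normal words such as $x_{1,3}(x_{2,3})^l x_{n,1}$.
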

\begin{proof}
Define a graded map
\[
\phi:X\longrightarrow \rmHH^*(A)
\]
by
\[
\begin{aligned}
\phi(x_{n,1}) &=
\begin{cases}
[f_{0,3}^n], & n = 0 \text{ or } n \text{ odd},\\
[2f_{0,3}^n - f_{n+1,5}^n], & n \ge 2, n \text{ even},
\end{cases} \\
\phi(x_{n,2}) &=
\begin{cases}
[f_{0,6}^n], & n = 0 \text{ or } n \text{ odd},\\
[2f_{0,6}^n - f_{n+1,7}^n], &  n \ge 2, n \text{ even},
\end{cases} \\
\phi(x_{n,3}) &=
\begin{cases}
[f_{1,4}^n], & n \text{ odd},\\
[f_{2,1}^n], & n \ge 2, n \text{ even}.
\end{cases}
\end{aligned}
\]
By the universal property of the graded commutative polynomial algebra \(K[X]\), the map extends uniquely to a graded \(K\)-algebra homomorphism
\[
\iota:K[X]\longrightarrow \rmHH^*(A)
\]
satisfying
\(
\iota(1_K)= 1_{\rmHH^*(A)}=[f_{0,1}^0+f_{1,2}^0]\).
One checks that \(\iota\) induces a surjective homomorphism
\[ \bar{\iota} \colon K[X]/J \longrightarrow \rmHH^*(A).\]

The set \(\mathcal{C}=\{1_K\}\cup \{a_1a_2\cdots a_n\mid n\ge 1, a_i\in X\}\) is a multiplicative basis of \(K\langle X \rangle\). Equip  \(\mathcal{C}\) with the left length-lexicographic order induced by
\(x_{1,3}<x_{2,3}<x_{3,3}<\cdots<x_{0,2}<x_{1,2}<x_{2,2}<\cdots <x_{0,1}<x_{1,1}<x_{2,1}<\cdots\). Let \(\mathscr{G}\) be the set consisting of the defining relations of \(J\) together with the following graded commutativity relations:
\begin{enumerate}
    \item \(x_{m,1} x_{n,2}-(-1)^{mn}x_{n,2}x_{m,1}\), for \(m,n \ge 0\);
    \item \(x_{m,1}x_{n,3}-(-1)^{mn}x_{n,3}x_{m,1}\), \(x_{m,2}x_{n,3}-(-1)^{mn}x_{n,3}x_{m,2}\), for \(m\ge 0\), \(n\ge 1\);
     \item \(x_{m,3}x_{n,3}-(-1)^{mn}x_{n,3}x_{m,3}\), for \(m>n\ge 1\).
\end{enumerate}

It can be easily seen that the set \(\mathscr{G}\) is tip reduced and that each element of \(\mathscr{G}\) is uniform.
Moreover, a direct computation shows that all overlap relations of \(\mathscr{G}\) reduce to \(0\). Therefore, \(\mathscr{G}\) is a Gr\"obner-Shirshov system for \(\langle\mathscr{G}\rangle\).
Hence \(K\langle X \rangle /\langle\mathscr{G}\rangle \cong K[X]/J\) has a  $K$-basis $L_1 \cup L_2$, where
$$\begin{aligned}
    L_1 &=\{x_{n,1}, x_{n,2}\mid n\ge 0\} \cup \{x_{n,3}\mid n \text{ odd}\}\cup\{x_{1,3}(x_{2,3})^lx_{0,1}, (x_{2,3})^{l+1}x_{0,1}\mid l\ge 0\}\cup\\
& \quad\quad\quad \{x_{1,3}(x_{2,3})^l x_{n,1},  (x_{2,3})^{l+1} x_{n,1} \mid n \ge 1, n\text{ odd}, l \ge 0\} \cup \{x_{1,3}(x_{2,3})^l x_{n,3} \mid n \ge 2, n\text{ even}, l \ge 0\},\\
L_2 &=\{1_K\}\cup\{(x_{2,3})^l x_{n,3}\mid n\ge2, n \text{ even}, l\ge0\}.
\end{aligned}$$
\iffalse
\[\begin{aligned}
    &\{1_K\}\cup \{x_{n,1}, x_{n,2}\mid n \ge 0\}\cup \{x_{n,3}\mid n \ge 1\}
\cup\{x_{1,3}(x_{2,3})^lx_{0,1}, (x_{2,3})^{l+1}x_{0,1}\mid l\ge 0\} \cup\\
&\quad\quad \{x_{1,3}(x_{2,3})^l x_{n,1},  (x_{2,3})^{l+1} x_{n,1} \mid n \ge 1, n\text{ odd}, l \ge 0\} \cup \{x_{1,3}(x_{2,3})^l x_{n,3},  (x_{2,3})^{l+1} x_{n,3} \mid n \ge 2, n\text{ even}, l \ge 0\}.
\end{aligned}
\] 

The set \( \{1,x_{0,1},x_{0,2}\} \) forms a linear basis of \( ( K[X]/J)_0 \). The set \(\{x_{1,1},x_{1,2},x_{1,3},x_{1,3}x_{0,1}\}\) forms a linear basis of \( (K[X]/J)_1 \). 

The set \( \{x_{2,1}, x_{2,2}, x_{2,3}, x_{2,3}x_{0,1}, x_{1,3}x_{1,1}\} \) forms a linear basis of \( ( K[X]/J)_2 \).

When \(n=2h+1 \) with \(h\ge 1\), 
\[ \begin{aligned}
    &\{x_{n,1}, x_{n,2}, x_{n,3}, x_{1,3}(x_{2,3})^hx_{0,1}\} \\
      &\cup \{(x_{2,3})^l x_{2(h-l)+1,1} \mid 1\le l \le h\}\\
     &\cup \{x_{1,3}(x_{2,3})^l x_{2(h-l),3}\mid 0\le l \le h-1\}
\end{aligned}
\]
forms a  linear basis of \( ( K[X]/J)_n \).

When \(n=2h \) with \(h\ge 2\), 
\[ \begin{aligned}
    &\{x_{n,1}, x_{n,2}, x_{n,3}, (x_{2,3})^hx_{0,1}\} \\
      &\cup \{x_{1,3}(x_{2,3})^l x_{2(h-l)-1,1}\mid 0\le l\le h-1\}\\
     &\cup \{(x_{2,3})^l x_{2(h-l),3}\mid 1\le l \le h-1\}
\end{aligned}
\]
forms a  linear basis of \( ( K[X]/J)_n \). Counting these basis elements yields \(\dim (K[X]/J)_n = n+3 = \dim \rmHH^n(A)\). Therefore, \(\bar{\iota} \) is an isomorphism of graded \(K\)-algebras.
\fi

Counting elements in this basis by degree gives
\[
\dim (K[X]/J)_n = n+3
\]
for all \(n\ge 0\). Since \(\dim \operatorname{HH}^n(A)=n+3\) and \(\bar{\iota}\) is surjective, \(\bar{\iota}\) is an isomorphism of graded \(K\)-algebras.

Let $I_2$ be the ideal generated by $x_{n,1}$, $x_{n,2}$ ($n\ge0$), and by $x_{n,3}$ for odd $n$. Then $I_2= \Span_K(L_1)$. These generators of $I_2$ are homogeneous and nilpotent, so $I_2\subseteq\calN$. 
Consider the quotient
$$\begin{aligned}
 R_2:=(K[X]/J)/I_2 \cong K[x_{n,3} \mid n\ge 2,\ n \text{ even}]/\langle x_{m,3}x_{n,3}-x_{2,3}x_{m+n-2,3} \mid m\ge4,\ n\ge2,\ m,n\text{ even}\rangle.
\end{aligned}$$
The algebra $R_2$ has the $K$-basis $L_2$, with multiplication
\[(x_{2,3})^kx_{m,3}\cdot (x_{2,3})^lx_{n,3} = (x_{2,3})^{k+l+1}x_{m+n-2,3}.\]
The assignment
 \[
    1_K\mapsto 1_K,\qquad (x_{2,3})^l x_{n,3}\mapsto x^{n-2}y^{2(l+1)}, n\geq 2\  \mathrm{even}, l\geq 0
\]
identifies $R_2$ with the subalgebra $K\oplus K[x^2,y^2]y^2$ of $K[x,y]$, where \(|x|=|y|=1\). Note that $x_{2,3}$ (resp. $x_{4,3}$) is sent to $y^2$ (resp. $x^2 y^2$). Thus the quotient $R_2$ has no nonzero homogeneous nilpotent elements. Hence
every homogeneous nilpotent element of $K[X]/J$ belongs to $I_2$, and therefore $\calN=I_2=\operatorname{Span}_K(L_1)$.
\end{proof}

\iffalse
Consequently, a linear basis of $\calN$ is
$$\begin{aligned}
    &\{x_{n,1}, x_{n,2}\mid n\ge 0\} \cup \{x_{n,3}\mid n \text{ odd}\}\cup\{x_{1,3}(x_{2,3})^lx_{0,1}, (x_{2,3})^{l+1}x_{0,1}\mid l\ge 0\}\\
&\cup \{x_{1,3}(x_{2,3})^l x_{n,1},  (x_{2,3})^{l+1} x_{n,1} \mid n \ge 1, n\text{ odd}, l \ge 0\} \cup \{x_{1,3}(x_{2,3})^l x_{n,3} \mid n \ge 2, n\text{ even}, l \ge 0\}.
\end{aligned}$$
\fi

\begin{corollary}[{\cite[Theorem 4.5]{Sna09}}]
When $\rmChar(K)\neq 2$, there exists  an isomorphism of graded algebras:
 \[ \begin{aligned}
         \rmHH^*(A)/\mathcal{N} 
         &\cong  K \oplus K[x^2,y^2]y^2,
    \end{aligned}
    \]
with \(|x|=|y|=1\).
In particular, \(\rmHH^*(A)/\mathcal{N}\) is not finitely generated.    
\end{corollary}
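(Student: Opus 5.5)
The corollary is a direct consequence of Theorem~\ref{thm: char neq 2 ring}, and I plan to read it off from the last part of that proof. By Theorem~\ref{thm: char neq 2 ring}, the surjection $\bar\iota$ identifies $\rmHH^*(A)$ with $K[X]/J$ as graded algebras. The same proof shows that $\calN$ coincides with the ideal $I_2=\Span_K(L_1)$, generated by $x_{n,1},x_{n,2}$ ($n\ge 0$) and $x_{n,3}$ ($n$ odd). Hence $\rmHH^*(A)/\calN\cong R_2=(K[X]/J)/I_2$. The first step is therefore only to record this chain of isomorphisms, noting that $\bar\iota$ carries the ideal of homogeneous nilpotents onto the corresponding ideal, since it is a graded algebra isomorphism.

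The second step is to make the isomorphism $R_2\cong K\oplus K[x^2,y^2]y^2$ explicit. Use the $K$-basis $L_2=\{1_K\}\cup\{(x_{2,3})^lx_{n,3}\mid n\ge 2 \text{ even},\ l\ge 0\}$ and the multiplication rule $(x_{2,3})^kx_{m,3}\cdot(x_{2,3})^lx_{n,3}=(x_{2,3})^{k+l+1}x_{m+n-2,3}$, both obtained from the Gr\"obner--Shirshov basis. Send $(x_{2,3})^lx_{n,3}\mapsto x^{n-2}y^{2(l+1)}$. The image basis $\{x^{2i}y^{2j}\mid i\ge 0,\ j\ge 1\}$ (with $i=(n-2)/2$, $j=l+1$) is exactly a basis of $K[x^2,y^2]y^2$, so the map is bijective. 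Multiplicativity follows from $x^{m-2}y^{2(k+1)}\cdot x^{n-2}y^{2(l+1)}=x^{m+n-4}y^{2(k+l+2)}$, which is the image of $(x_{2,3})^{k+l+1}x_{m+n-2,3}$. Degrees match, since $|(x_{2,3})^lx_{n,3}|=2l+n$ equals $(n-2)+2(l+1)$. Graded commutativity causes no sign trouble, because every element of $R_2$ has even degree.

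The final step, which is the only one needing a genuine argument, is to show that $S=K\oplus K[x^2,y^2]y^2$ is not finitely generated. Suppose $S$ were generated by finitely many homogeneous elements of positive degree, all of degree at most $2N$. Consider $x^{2N}y^2$, of degree $2N+2$. Any product of two or more positive-degree elements of $S$ lies in $(y^2)^2K[x^2,y^2]$, i.e.\ is divisible by $y^4$. Hence the component of $S$ of degree $2N+2$ spanned by generators and their products misses $x^{2N}y^2$: a generator of degree $2N+2$ does not exist, and every product of generators has $y$-degree at least $4$. This contradiction is the expected argument, and I anticipate no real obstacle; the substantive work (the basis $L_2$ and $\calN=I_2$) was already done in Theorem~\ref{thm: char neq 2 ring}.
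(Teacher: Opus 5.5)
Your proposal is correct and follows the paper's route: the corollary is read off from the end of the proof of Theorem~\ref{thm: char neq 2 ring}, where $\calN=I_2$ and the same assignment $(x_{2,3})^l x_{n,3}\mapsto x^{n-2}y^{2(l+1)}$ identifies $R_2$ with $K\oplus K[x^2,y^2]y^2$. Your degree argument with $x^{2N}y^2$ correctly supplies the non-finite-generation step, which the paper states without proof.
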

\iffalse
\begin{proof}
The assignment
    \[
    1_K\mapsto 1_K,\qquad (x_{2,3})^l x_{n,3}\mapsto x^{n-2}y^{2(l+1)}, n\geq 2\  \mathrm{even}, l\geq 0
    \]
    induces an isomorphism of graded algebras
\[
R\cong K\oplus K[x^2,y^2]y^2,
\]
with \(|x|=|y|=1\). Hence, $\rmHH^*(A)/\mathcal{N}
\cong K\oplus K[x^2,y^2]y^2$.  

Note that $x_{2,3}$ (resp. $x_{4,3}$) is sent to $y^2$ (resp. $x^2 y^2$).
\end{proof}
\fi

%----------------------------------------------------------------------------------------------------------------%

\section{Gerstenhaber Lie Bracket}\label{Section: Gerstenhaber Lie Bracket}

In this section, we compute the Gerstenhaber Lie bracket on the Hochschild cohomology ring.

S\'anchez-Flores \cite{SF08} introduced a reduced bracket using the cochain complex \(B^*\) of the algebra \(A\). Let \(f\in B^m \) and \(g\in B^n\). If \(m\ge 1\), \( n\ge 0\), then for a fixed index \(1\le i \le m\), $f \bar{\circ}_i g \in B^{m+n-1}$ is defined by 
\[f \bar{\circ}_i g ((a_1, \cdots , a_{m+n-1})) :=  (-1)^{(i-1)(n-1)}f((a_1, \cdots , \pi g((a_i, \cdots , a_{i+n-1})), \cdots , a_{m+n-1})),\]
where \(\pi: A\to A_+\) is the canonical projection. For \( n = 0 \), the expression \(\pi g((a_i, \cdots , a_{i+n-1}))\) is interpreted as \(\pi g(1_A)\). If $m=0$, then define $f \bar{\circ}_i g=0$. Set
\[f \bar{\circ} g = \sum_{i=1}^{m} f \bar{\circ}_i g, \quad [f, g]_B = f \bar{\circ} g - (-1)^{(m-1)(n-1)} g \bar{\circ} f. \]
The differential \(\epsilon^*\) is a  derivation with respect to  this reduced Lie bracket:
\[ \epsilon^*([f,g]_B) = [\epsilon^*(f), g]_B + (-1)^{m-1}[f, \epsilon^*(g)]_B, \]
hence, the differential $\epsilon^*$ induces a well-defined bracket on the Hochschild cohomology groups:
\[[-,-]_B: \rmHH^m(A) \times \rmHH^n(A) \to \rmHH^{m+n-1}(A).\]
S\'anchez-Flores showed that the reduced bracket on
\(\rmHH^{*+1}(A)= \bigoplus_{n=1}^{\infty} \rmHH^n(A)\) coincides with the Gerstenhaber bracket on \(\rmHH^{*+1}(A)\).

We now transfer this bracket to \((B^{\calM})^*\), via the cochain maps \(\Phi^*\) and \(\Psi^*\), and denote it by \([-, -]_{B^{\calM}}\) as in \cite{BIKLZ26}. The bilinear map
\(\tilde{\circ}_i : (B^\calM)^m \times (B^\calM)^n \longrightarrow (B^\calM)^{m+n-1}\)
is defined by 
\[f \tilde{\circ}_i g := \Phi^{m+n-1}(\Psi^m(f) \bar{\circ}_i \Psi^n(g))=((f\circ\Psi_m) \bar{\circ}_i (g\circ\Psi_n))\circ\Phi_{m+n-1},\]
where \(f\in (B^{\calM})^m \) and \(g\in (B^{\calM})^n\). Define
\[f \tilde{\circ} g = \sum_{i=1}^{m} f \tilde{\circ}_i g, \quad [f, g]_{B^{\calM}} = f \tilde{\circ} g - (-1)^{(m-1)(n-1)} g \tilde{\circ} f.\]

For any \(f\in (B^{\calM})^m \) and \(g\in (B^{\calM})^n\), we have $[f,g]_{B^{\calM}} = \Phi^{m+n-1} ([\Psi^{m}(f), \Psi^{n}(g)]_B)$ by a direct computation. Then
\[\begin{aligned}
    \partial^*([f,g]_{B^{\calM}}) &= \partial^*\circ \Phi^{m+n-1} ([\Psi^{m}(f), \Psi^{n}(g)]_B) \\
    &= \Phi^{m+n}\circ\epsilon^* ([\Psi^{m}(f), \Psi^{n}(g)]_B) \\
    &= \Phi^{m+n} ([\epsilon^*(\Psi^{m}(f)), \Psi^{n}(g)]_B) +(-1)^{m-1} \Phi^{m+n} ([\Psi^{m}(f), \epsilon^*(\Psi^{n}(g))]_B)\\
    &=\Phi^{m+n} ([\Psi^{m+1}(\partial^*(f)), \Psi^{n}(g)]_B) +(-1)^{m-1}  \Phi^{m+n} ([\Psi^{m}(f), \Psi^{n+1}(\partial^*(g))]_B) \\
    &=[\partial^*(f),g]_{B^{\calM}} + (-1)^{m-1} [f, \partial^*(g)]_{B^{\calM}}.
\end{aligned} \]
Hence, the Lie bracket \([-, -]_{B^{\calM}}\) induces a well-defined bracket on the Hochschild cohomology groups:
\[[-,-]_{B^{\calM}}: \rmHH^m(A) \times \rmHH^n(A) \to \rmHH^{m+n-1}(A),\]
and this induced bracket on \(\rmHH^{*+1}(A)\) coincides with the reduced bracket on \(\rmHH^{*+1}(A)\).

\begin{lem} \label{lem: easy circle product}
    Let \(m\ge 1\), \(n\ge 0\), \(0\le k \le m+1\), \(0\le l \le n+1\), \(q\in \{1,2,3,4,5,6,7\}\), \(1 \le i \le m\).
    \begin{enumerate}
        \item If \(j=1,2\), then \(f_{k,q}^m \tilde{\circ}_i f_{l,j}^n =0\).
        \item If \(j=3\) \textup{(}resp. \(4,5\)\textup{)} and \(a\) \textup{(}resp. \(b,c\)\textup{)} does not appear in \(s_k^m\), then \(f_{k,q}^m \tilde{\circ}_i f_{l,j}^n =0\).
    \end{enumerate}
\end{lem}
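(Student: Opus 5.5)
The proof works directly from the definition
\[
f_{k,q}^m\tilde{\circ}_i f_{l,j}^n=\bigl((f_{k,q}^m\circ\Psi_m)\bar{\circ}_i(f_{l,j}^n\circ\Psi_n)\bigr)\circ\Phi_{m+n-1},
\]
and shows that already the inner cochain $(f_{k,q}^m\circ\Psi_m)\bar{\circ}_i(f_{l,j}^n\circ\Psi_n)$ vanishes on every element of the bar resolution that can occur. Evaluating on a generator $(a_1,\dots,a_{m+n-1})$ of $B_{m+n-1}$ with $a_t\in\Nontip(I)\setminus Q_0$, we get up to sign
\[
f_{k,q}^m\Bigl(\Psi_m\bigl(a_1,\dots,a_{i-1},\pi f_{l,j}^n(\Psi_n(a_i,\dots,a_{i+n-1})),a_{i+n},\dots\bigr)\Bigr).
\]
The inner value $f_{l,j}^n(\Psi_n(\cdots))$ is, by $A^e$-linearity, a sum of terms $u\,z_j\,v$ with $u,v\in A$, coming from the weights $u\otimes v$ occurring in $\Psi_n$.

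For part (1), take $j=1$. Then $z_j=e_1$, so $u\,e_1\,v$ is a combination of paths. The key observation is that $\pi$ kills anything of length $0$ and that, in the reduced bar complex, a tensor slot must be in $A_+$. If $u\otimes v=1\otimes1$, then $\pi(e_1)=0$. If $u$ or $v$ has positive length, then $u e_1 v\in A_+$ does survive. So for $j=1$ the plan must be more careful: I would check from the list of values of $\Psi_n$ recorded in Section~\ref{sec: Comparison morphisms} that $\Psi_n$ of a basis tensor is either the tensor itself (a chain, with weight $1\otimes1$) or a sum with weights $b\otimes1$, $1\otimes a$, $1\otimes c$, and that the resulting slot $ue_1v\in\{a,b,c,ba,\dots\}$. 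This slot then sits next to neighbours, and the outer tensor is shown to lie in $\calU_m$ or to give zero after applying $\Psi_m$.

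Since this route looks messy, my preferred route for $j=1,2$ is the simpler one. Since $f_{l,j}^n\circ\Psi_n$ and $f\circ\Psi_m$ are only evaluated through $\Phi_{m+n-1}$, whose image consists of tensors of arrows $a,b,c$ (by the explicit formula for $\Phi$), the relevant inputs to $\Psi_n$ are tensors of arrows. On such inputs $\Psi_n$ is the identity on chains and $0$ on $\calU_n$, with no $\calD_n$ terms, since elements of $\calD_n$ contain a length-two entry. Hence the inner value is exactly $z_j$ or $0$. For $j=1,2$, $z_j$ is an idempotent, so $\pi z_j=0$ and the whole composition vanishes. This settles (1).

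For part (2), the same reduction applies. On inputs from $\Phi_{m+n-1}(s)$, the inner slot becomes $\pi z_j=z_j\in\{a,b,c\}$, which is a single arrow. The outer tensor is therefore again a tensor of arrows, and $\Psi_m$ of it is that tensor if it is an Anick chain and $0$ otherwise. So $f_{k,q}^m$ is nonzero only if the tensor equals $s_k^m$. But this tensor contains the letter $z_j$ in position $i$, while $s_k^m$ does not contain $a$ (resp. $b$, $c$). Hence the value is $0$.

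The main point to verify carefully is the claim that $\Phi_{m+n-1}$ lands in the span of arrow-tensors. This follows from the explicit formula proved by induction in Section~\ref{sec: Comparison morphisms}, where every summand is a sequence of $a$'s, $b$'s and $c$. It also uses that $\Psi$ of such a tensor is the tensor itself or zero: an arrow-tensor lies in $\calW^{(\cdot)}$ or in $\calU$, never in $\calD$.
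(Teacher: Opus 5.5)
Your proposal is correct and is essentially the paper's argument. Both evaluate $f_{k,q}^m\tilde{\circ}_i f_{l,j}^n$ on $\Phi_{m+n-1}(s_r^{m+n-1})$, a signed sum of arrow-tensors, so $\Psi$ is the identity on Anick chains and zero on $\calU$. The inserted slot is then $\pi z_j$, which is zero for $j=1,2$ and a single arrow for $j=3,4,5$, and the resulting outer arrow-tensor cannot equal $s_k^m$. Your observation that arrow-tensors never lie in $\calD_n$ is exactly what justifies the paper's terse claim in (1) that the relevant values of $f_{l,j}^n$ lie in $\Span_K\{e_1,e_2\}$; on general elements of $B_n^{\calM}$ that claim fails.
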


\begin{proof}
    (1) The values of \(f_{l,j}^n\) on $B_n^{\calM}$ are in \(\Span_K\{e_1,e_2\}\). Since \(\pi(0)= \pi(e_1)=\pi (e_2)=0\), the assertion follows.
    
    (2) \[\begin{aligned}
        f_{k,q}^m \tilde{\circ}_i f_{l,j}^n(s_0^{m+n-1}) &=(f_{k,q}^m\Psi_m \bar{\circ} _i f_{l,j}^n\Psi_n)\circ\Phi_{m+n-1}(s_0^{m+n-1})= (f_{k,q}^m\Psi_m \bar{\circ} _i f_{l,j}^n\Psi_n)(s_0^{m+n-1})\\
        &= (-1)^{(i-1)(n-1)}f_{k,q}^m\Psi_m(a,\cdots,a,\pi(f_{l,j}^n\Psi_n(a,\cdots,a)),a,\cdots,a)\\
       &=0,
     \end{aligned}\]
   where the last equality follows from the fact that 
   $$(a,\cdots,a,\pi(f_{l,j}^n\Psi_n(a,\cdots,a)),a,\cdots,a) \in {0}\cup \calU_m\cup \calW^{(m-1)}$$ does not equal \(s_k^m \in \calW^{(m-1)}\), since $\pi(f_{l,j}^n\Psi_n(a,\cdots,a))$ does not appear in \(s_k^m\). The same argument applies to \(s_r^{m+n-1}\), \(1\le r \le m+n\), and hence \( f_{k,q}^m \tilde{\circ}_i f_{l,j}^n=0\).
\end{proof}

\begin{prop}\label{prop: circle product}
    Let \(m \ge  1\), \(n \ge  0\) and fix an index \(i\) such that \(1 \le i \le m\).  Set
$\varepsilon_{i,n}=(-1)^{(i-1)(n-1)}$.
    \begin{enumerate}
        \item For \(n = 0\), \begin{enumerate}
            \item \(q=3,6\), $f_{0,q}^m \tilde{\circ}_i (f_{0,1}^0 + f_{1,2}^0) =0$; $q=1,4$, $f_{1,q}^m \tilde{\circ}_i (f_{0,1}^0 + f_{1,2}^0) =0$.
            \item \(q=3,6\), $f_{0,q}^m \tilde{\circ}_i f_{0,3}^0 = (-1)^{i-1} f_{0,q}^{m-1}$, $f_{0,q}^m \tilde{\circ}_i f_{0,6}^0 = \begin{cases}
                (-1)^{i-1} f_{0,6}^{m-1}, & q=3, \\
                0, &q=6.
            \end{cases} $
             \item \(q=1,4\), $f_{1,q}^m \tilde{\circ}_i f_{0,3}^0 =\begin{cases}
                (-1)^{i-1} f_{1,q}^{m-1}, & 1 \le i \le m-1,\\
                0, & i=m.
            \end{cases}$
            \item $f_{1,1}^m \tilde{\circ}_i f_{0,6}^0 =\begin{cases}
                (-1)^{i-1}f_{1,4}^{m-1}, & 1 \le i \le m-1,\\
                (-1)^{m-1} f_{0,3}^{m-1}, & i=m,
            \end{cases}$   
             $f_{1,4}^m \tilde{\circ}_i f_{0,6}^0 =\begin{cases}
               0, & 1\le i\le m-1,\\
                (-1)^{m-1} f_{0,6}^{m-1}, & i=m.
             \end{cases}$ 
        \end{enumerate}
        \item For \(m\ge 2\), \(m \text{ even}\), \(n = 0\), \begin{enumerate}
            \item $f_{2,1}^m \tilde{\circ}_i (f_{0,1}^0 + f_{1,2}^0) =0$;  \(q=5,7\), $f_{m+1,q}^m \tilde{\circ}_i (f_{0,1}^0 + f_{1,2}^0) =0$.
            \item  \(q=5,7\), $f_{m+1,q}^m \tilde{\circ}_i f_{0,3}^0 = \begin{cases}
               (-1)^{i-1} f_{m,q}^{m-1}, & 1 \le i \le m-1,\\
                0, & i=m.
                \end{cases}$ 
            \item \( q=5,7\), $f_{m+1,q}^m \tilde{\circ}_i f_{0,6}^0 = \begin{cases}
                (-1)^{i-1}f_{m,7}^{m-1}, & q=5, 1 \le i \le m-1,\\
                0, & q=7 \text{ or }  i=m.
                \end{cases}$
            \item $f_{2,1}^m \tilde{\circ}_i f_{0,3}^0 =\begin{cases}
                (-1)^{i-1}f_{2,1}^{m-1}, & 1 \le i \le m-2,\\
                0, & i=m-1, m,
            \end{cases}$
            $f_{2,1}^m \tilde{\circ}_i f_{0,6}^0 =\begin{cases}
                (-1)^{i-1}f_{2,4}^{m-1}, & 1 \le i \le m-2,\\
                (-1)^{i-1}f_{1,3}^{m-1}, & i=m-1, m.
            \end{cases}$
        \end{enumerate}
        \item For \(m,n\ge 1\), \begin{enumerate}
            \item \(q=3,6\), $f_{0,q}^m \tilde{\circ}_i f_{0,3}^n = \varepsilon_{i,n}f_{0,q}^{m+n-1}$, 
            $f_{0,q}^m \tilde{\circ}_i f_{0,6}^n = \begin{cases}
                \varepsilon_{i,n}f_{0,6}^{m+n-1}, & q=3,\\
                0, & q=6.
            \end{cases}$ 
             \item \(q=3,6\), \(j=1,4\), \(f_{0,q}^m \tilde{\circ}_i f_{1,j}^n = 0\).
             \item \( q=1,4\), \(f_{1,q}^m \tilde{\circ}_i f_{0,3}^n = \begin{cases}
                \varepsilon_{i,n}f_{1,q}^{m+n-1}, & 1\le i\le m-1,\\
                0, & i=m.
            \end{cases}\)
             \item $f_{1,1}^m \tilde{\circ}_i f_{0,6}^n =\begin{cases}
                \varepsilon_{i,n}f_{1,4}^{m+n-1}, & 1 \le i \le m-1,\\
                \varepsilon_{m,n}f_{0,3}^{m+n-1}, & i=m,
            \end{cases}$ 
            $f_{1,4}^m \tilde{\circ}_i f_{0,6}^n = \begin{cases}
                0, & 1\le i\le m-1,\\
               \varepsilon_{m,n} f_{0,6}^{m+n-1}, & i=m.
            \end{cases}$
            \item \(q=1,4\), \( f_{1,q}^m \tilde{\circ}_i f_{1,4}^n = \begin{cases}
                0, & 1\le i\le m-1,\\
               \varepsilon_{m,n} f_{1,q}^{m+n-1}, & i=m.
            \end{cases} \)
            \item $q=1,4$, \( f_{1,q}^m \tilde{\circ}_i f_{1,1}^n = 0\).
        \end{enumerate}
        \item For \(m\ge 1\), \(n\ge 2\), \(n \text{ even}\), \begin{enumerate}
            \item \(q=3,6\), \(j=5,7\), \(f_{0,q}^m \tilde{\circ}_i f_{n+1,j}^n = 0\).
            \item $f_{1,4}^m \tilde{\circ}_i f_{n+1,5}^n = 0$,  $f_{1,4}^m \tilde{\circ}_i f_{n+1,7}^n = \begin{cases}
                0, & 1\le i \le m-1, \\
                (-1)^{m-1}f_{m+n,7}^{m+n-1}, &i=m.
            \end{cases}$ 
            \item \(q=3,6\), $f_{0,q}^m \tilde{\circ}_i f_{2,1}^n = 0$; $f_{1,4}^m \tilde{\circ}_i f_{2,1}^n = 0$.
        \end{enumerate}
        \item  For \(m\ge 2\), \(m \text{ even}\), \(n\ge 1\), \begin{enumerate}
            \item \(q=5,7\),\\
            $f_{m+1,q}^m \tilde{\circ}_i  f_{0,3}^n = \begin{cases}
                \varepsilon_{i,n}f_{m+n,q}^{m+n-1}, & 1 \le i \le m-1,\\
                0, & i=m,
               \end{cases} $
              $ f_{m+1,q}^m \tilde{\circ}_i  f_{0,6}^n = \begin{cases}
                \varepsilon_{i,n}f_{m+n,7}^{m+n-1}, & q=5, 1 \le i \le m-1,\\
                0, & q=7 \text{ or } i=m.
               \end{cases} $
             \item \(q=5,7\), \( f_{m+1,q}^m \tilde{\circ}_i  f_{1,4}^n = 0\).
            \item  $f_{2,1}^m \tilde{\circ}_i  f_{0,3}^n = \begin{cases}
                \varepsilon_{i,n}f_{2,1}^{m+n-1}, & 1 \le i \le m-2,\\
                0, & i=m-1,m,
            \end{cases}$ 
            $f_{2,1}^m \tilde{\circ}_i  f_{0,6}^n = \begin{cases}
                \varepsilon_{i,n}f_{2,4}^{m+n-1}, & 1 \le i \le m-2,\\
                f_{1,3}^{m+n-1}, & i=m-1, \\
               -f_{1,3}^{m+n-1}, & i=m.
             \end{cases} $
            \item \(f_{2,1}^m \tilde{\circ}_i  f_{1,4}^n = \begin{cases}
                0, & 1 \le i \le m-2,\\
                f_{2,1}^{m+n-1}, & i=m-1,m.
            \end{cases}\)
        \end{enumerate}
     \item  For \(m, n\ge 2\), \(m,n \text{ even}\), \begin{enumerate}
         \item \( q=5,7\),  $f_{m+1,q}^m \tilde{\circ}_i f_{n+1,5}^n = \begin{cases}
                0, \quad\quad\quad 1 \le i \le m-1,\\
               -f_{m+n,q}^{m+n-1}, \quad i=m,
                \end{cases}$ 
        $f_{m+1,q}^m \tilde{\circ}_i f_{n+1,7}^n = \begin{cases}
                -f_{m+n,7}^{m+n-1}, & q=5, i=m,\\
                0, & \text{otherwise}.  
                \end{cases}$
         \item \( q=5,7\),  $f_{m+1,q}^m \tilde{\circ}_i f_{2,1}^n =0$,  $f_{2,1}^m \tilde{\circ}_i f_{n+1,q}^n = 0$. 
         \item $f_{2,1}^m \tilde{\circ}_i f_{2,1}^n=0$.
     \end{enumerate}
    \end{enumerate}
\end{prop}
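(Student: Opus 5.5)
The plan is to compute every identity directly from the definition
\[
f \tilde{\circ}_i g=((f\circ\Psi_m)\,\bar{\circ}_i\,(g\circ\Psi_n))\circ\Phi_{m+n-1},
\]
by evaluating both sides on each generator $s_r^{m+n-1}$, $0\le r\le m+n$, of $B^{\calM}_{m+n-1}$. The first step is to expand $\Phi_{m+n-1}(s_r^{m+n-1})$ using the explicit formulas of Section~\ref{sec: Comparison morphisms}. We have $\Phi(s_0)=(a,\dots,a)$, $\Phi(s_{m+n}^{m+n-1})=(a,\dots,a,c)$, and for $1\le r\le m+n-1$, $\Phi(s_r)$ is the signed sum over all words with $r$ letters $b$ and the remaining letters $a$. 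For each bar element $(a_1,\dots,a_{m+n-1})$ in this expansion, we then compute $g\circ\Psi_n$ on the block $(a_i,\dots,a_{i+n-1})$. Since $\Psi_n$ is the identity on Anick chains, vanishes on $\calU_n$, and the block contains no product letters, $\Psi_n$ of a block in $\mathcal V_n$ consisting of single letters $a,b,c$ is nonzero only if the block is itself an Anick chain $s_l^n$. Thus $g(s_l^n)=z_j$ contributes only when the block equals $s_l^n$ exactly.

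Next, $\pi(z_j)$ is inserted back into position $i$, and $f\circ\Psi_m$ is applied to the resulting $m$-tuple. This tuple may now contain $ba$, $bc$, $c$, or $b$ in new positions. Here the list of values $\Psi_m(x)$ for $x\in\calD_m$ recorded in Section~\ref{sec: Comparison morphisms} is used, together with the vanishing statements \eqref{equ: ba,b}, \eqref{equ: b,ba}, \eqref{equ: ba,b,b}, \eqref{equ: b,ba,b}, \eqref{equ: b,b,ba}. A typical example is the term $\Psi_m((a,\dots,a,ba))=(b\otimes1)s_0^m+(1\otimes a)s_1^m$. It explains, for instance, why $f_{1,1}^m\tilde\circ_m f_{0,6}^n$ produces $f_{0,3}^{m+n-1}$ (the coefficient $b\cdot e_1\cdot$ against $s_0$ is wrong; rather $e_1\cdot a$ against $s_1$ gives $a$ on $s_0^{m+n-1}$), while the positions $i<m$ give $(a,\dots,ba,\dots,a)\mapsto (b\otimes1)s_0^m$, yielding the $f_{1,4}$ term. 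Lemma~\ref{lem: easy circle product} disposes at once of every case where $g$ takes values in $E$ (part (1)) or inserts a letter absent from $s_k^m$ (part (2)). This covers items such as (1)(a), (2)(a), (3)(b), (3)(f), (4)(a), (4)(c), (5)(b) and (6)(b), (6)(c).

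The remaining cases are organized by the shape of the inserted value. If $\pi g\in\{a\}$ (coming from $f_{\ast,3}$), the tuple stays a word in $a,b$, and $\Psi_m$ is nonzero only on Anick chains. One then checks which words in the signed sum for $\Phi(s_r)$ become $s_k^m$ after the block is collapsed to $a$. The restriction $i\le m-1$, versus $i\le m-2$ for $f_{2,1}^m$, comes from requiring the collapsed $a$ to sit before the final $b$'s. If $\pi g\in\{b,ba,c,bc\}$ (coming from $f_{\ast,4},f_{\ast,6},f_{\ast,5},f_{\ast,7}$), the $\calD_m$-formulas are used, and only the boundary positions $i=m-1,m$ produce the $(1\otimes a)$ or $(1\otimes c)$ terms. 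The signs are tracked as $\varepsilon_{i,n}$ from $\bar\circ_i$, times the sign $(-1)^{i_1+2i_2+\cdots}$ from $\Phi$, times the weights in $\Psi$. Only one word survives in each case, and its $\Phi$-sign is $\pm1$ according to the position of the $b$'s. This yields the stated coefficients, for example the $\pm f_{1,3}^{m+n-1}$ at $i=m-1,m$ in (5)(c).

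I expect the main obstacle to be items (2), (5) and (6), which involve $f_{2,1}^m$ and $f_{m+1,q}^m$ for even $m$. There, $\Phi(s_r)$ has many words with two $b$'s, and one must show that all but one collapse to elements of $\calU_m$ or to the zero cases \eqref{equ: ba,b}–\eqref{equ: b,b,ba}. I would handle this by using Remark~\ref{rem: all used short path} as in Proposition~\ref{prop: cup product be zero}: a word with an $a$ strictly between or after the $b$'s lies in $\calU$ unless it is exactly one of the listed $\calD$-shapes. This reduces each sum to one or two explicit terms, whose signs are then computed by hand.
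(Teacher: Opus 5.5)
Your plan is the paper's proof: evaluate $f\tilde{\circ}_i g$ on each $s_r^{m+n-1}$ through $\Phi_{m+n-1}$, keep only the blocks that equal an Anick chain, insert $\pi g$, and apply $f\circ\Psi_m$ using the recorded $\calD_m$-values and vanishing formulas, with Lemma~\ref{lem: easy circle product} disposing of the trivial items (the paper writes out only (1)(d)). There are two small slips to fix: first, for $i<m$ the $f_{1,4}^{m+n-1}$ term of $f_{1,1}^m\tilde{\circ}_i f_{0,6}^n$ comes from $s_1^{m+n-1}$, through the $(b\otimes 1)s_1^m$ term of $\Psi_m$ applied to $(a,\dots,a,ba,a,\dots,a,b)$, and not from $(b\otimes1)s_0^m$, which pairs to zero with $f_{1,1}^m$; second, the $bc$-valued cases ($j=7$ in (4)(a), and $q=7$ in the second half of (6)(b)) are not covered by the lemma, but they follow at once from $\Psi_m((a,\dots,a,bc))=(b\otimes1)s_{m+1}^m+(1\otimes c)s_1^m$.
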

\begin{proof}
Let  \(f\in (B^{\calM})^m \) and \(g\in (B^{\calM})^n\). We first describe the value of \(f\tilde{\circ}_i g\) on the elements \(s_r^{m+n-1}\), \(0\le r \le m+n\). 
\[\begin{aligned}
    f \tilde{\circ}_i g(s_0^{m+n-1}) &=(f\Psi_m \bar{\circ}_i g\Psi_n)(\Phi_{m+n-1}(s_0^{m+n-1}))=f\Psi_m \bar{\circ}_i g\Psi_n(s_0^{m+n-1})\\
&=\varepsilon_{i,n}f\Psi_m(\underbrace{a,\cdots,a}_{i-1},\pi(g\Psi_n(a,\cdots,a)),\underbrace{a,\cdots,a}_{m-i}),\\
     f \tilde{\circ}_i g(s_{m+n-1}^{m+n-1})&=(f\Psi_m \bar{\circ}_i g\Psi_n)(\Phi_{m+n-1}(s_{m+n-1}^{m+n-1}))=f\Psi_m \bar{\circ}_i g\Psi_n(s_{m+n-1}^{m+n-1}) \\ 
     &= \varepsilon_{i,n}f\Psi_m(\underbrace{b,\cdots,b}_{i-1},\pi(g\Psi_n(b, \cdots, b)),\underbrace{b,\cdots,b}_{m-i}),\\
      f \tilde{\circ}_i g(s_{m+n}^{m+n-1})&=(f\Psi_m \bar{\circ}_i g\Psi_n)(\Phi_{m+n-1}(s_{m+n}^{m+n-1})=f\Psi_m \bar{\circ}_i g\Psi_n(s_{m+n}^{m+n-1}) \\ 
     &= \begin{cases}
         \varepsilon_{i,n}f\Psi_m(\underbrace{a,\cdots,a}_{i-1},\pi(g\Psi_n(a\cdots,a)),\underbrace{a,\cdots,a}_{m-i-1},c),  & 1\le i\le m-1,\\
        \varepsilon_{m,n}f\Psi_m(\underbrace{a,\cdots,a}_{m-1},\pi(g\Psi_n(a\cdots,a,c))),&i=m,
     \end{cases}
\end{aligned}
\]
and for \(1\le r \le m+n-2\),
\[\begin{aligned}
    f \tilde{\circ}_i g(s_r^{m+n-1}) &=(f\Psi_m \bar{\circ}_i g\Psi_n)(\Phi_{m+n-1}(s_r^{m+n-1})) \\
    &=f\Psi_m \bar{\circ}_i g\Psi_n(\sum_{i_0+i_1+\cdots+i_r=m+n-r-1}(-1)^{i_1+2i_2+\cdots+ri_r}(\underbrace{a,\cdots,a}_{i_0},b,\underbrace{a,\cdots,a}_{i_1},b,\cdots,b,\underbrace{a,\cdots,a}_{i_r})).\\     
\end{aligned}\]

The formulas (1)(a), (2)(a), (3)(b)(f), (4)(c), (5)(b), (6)(c) follow directly from the Lemma \ref{lem: easy circle product}. We prove (1)(d) only, the remaining cases are obtained in the same way.
    
(1)(d) Let \(q=1,4\). If \(m=1\),
    \[\begin{aligned}
        f_{1,q}^1 \tilde{\circ}_1 f_{0,6}^0(e_1) &=f_{1,q}^1\Psi_1((1\otimes e_1) (\pi f_{0,6}^0\Psi_0(1)))\\
        &=f_{1,q}^1\Psi_1((1\otimes e_1)(ba))\\
        &=f_{1,q}^1((b\otimes e_1)s_0^1+(1\otimes a) s_1^1)\\
        &= \begin{cases}
            a, &q=1,\\
            ba, &q=4,
        \end{cases} \\
     f_{1,q}^1 \tilde{\circ}_1 f_{0,6}^0(e_2) &=f_{1,q}^1\Psi_1((1\otimes e_2)(\pi f_{0,6}^0\Psi_0(1)))=0,
    \end{aligned}\]
where the last equality follows from  $(1 \otimes e_2)(ba) = 0$.
If \(m\ge 2\), 
\[\begin{aligned}
    f_{1,q}^m \tilde{\circ}_i f_{0,6}^0(s_0^{m-1}) 
    &=(-1)^{i-1}f_{1,q}^m\Psi_m((\underbrace{a,\cdots,a}_{i-1},ba,\underbrace{a,\cdots,a}_{m-i}))\\
    &=\begin{cases}
        (-1)^{i-1}f_{1,q}^m((b\otimes1)s_0^m)=0, &1\le i\le m-1,\\
       (-1)^{m-1} f_{1,q}^m((b\otimes1)s_0^m+ (1\otimes a)s_1^m)=\begin{cases}
            (-1)^{m-1}a, &q=1,\\
            (-1)^{m-1}ba, &q=4,
        \end{cases} & i=m.
    \end{cases}
\end{aligned}\]
\[\begin{aligned}
     (\#):=f_{1,q}^m \tilde{\circ}_i f_{0,6}^0(s_1^{m-1}) 
     &=(-1)^{i-1}\sum_{\substack{i_0+i_1=m-2\\ i\le i_0+1}}(-1)^{i_1}f_{1,q}^m\Psi_m(\underbrace{a,\cdots,a}_{i-1},ba,\underbrace{a,\cdots,a}_{i_0-i+1},b,\underbrace{a,\cdots,a}_{i_1})\\
     & \quad +(-1)^{i-1}\sum_{\substack{i_0+i_1=m-2\\ i \ge i_0+2}}(-1)^{i_1}f_{1,q}^m\Psi_m(\underbrace{a,\cdots,a}_{i_0},b,\underbrace{a,\cdots,a}_{i-i_0-2},ba,\underbrace{a,\cdots,a}_{m-i})\\
\end{aligned}\]
For \(m=2\),
\[ (\#) = \begin{cases}
    f_{1,q}^m\Psi_m((ba,b)) =  f_{1,q}^m((b\otimes1)s_1^2+(1\otimes a)s_2^2) =\begin{cases}
        b, & q=1,\\
        0, & q=4,
    \end{cases} & i=1,\\
    -f_{1,q}^m\Psi_m((b,ba)) =  -f_{1,q}^m((1\otimes a)s_2^2) =0, & i=2.
\end{cases}\]
For \(m\ge 3\), by equation (\ref{equ: ba,b}), all terms in the first sum with \(i_1\ge 1\) vanish; the only possible nonzero term in the first sum is the one with \(i_1=0\), \(i_0=m-2\). By equation (\ref{equ: b,ba}), the second sum can be nonzero only when \(i=m\), \(i_0=m-2\). Hence, according to the value of \(i\), we have
\[(\#)  =\begin{cases}
         (-1)^{i-1}f_{1,q}^m\Psi_m(\underbrace{a,\cdots,a}_{i-1},ba,\underbrace{a,\cdots,a}_{m-i-1},b)=(-1)^{i-1}f_{1,q}^m((b\otimes1)s_1^m)=\begin{cases}
             (-1)^{i-1}b, &q=1,\\
             0, &q=4,
         \end{cases} \quad\quad 1\le i\le m-2,\\
         (-1)^{m-2} f_{1,q}^m\Psi_m(a,\cdots,a,ba,b)= (-1)^{m-2}f_{1,q}^m((b\otimes1)s_1^m+(1\otimes a)s_2^m)=\begin{cases}
              (-1)^{m-2}b, &q=1,\\
             0, &q=4,
          \end{cases}\quad i=m-1,\\
          (-1)^{m-1} f_{1,q}^m\Psi_m(a,\cdots,a,b,ba)=(-1)^{m-1}f_{1,q}^m((1\otimes a)s_2^m)=0, \quad\quad\quad \quad i=m.
     \end{cases} \]
The values of \(f_{1,q}^m \tilde{\circ}_i f_{0,6}^0\) on \(s_l^{m-1}\) (\(2\le l \le m\)) are zero by the definition of \(f_{1,q}^m\).
\end{proof}

Using Proposition \ref{prop: circle product}, one can obtain the following description of Lie brackets.
\begin{theorem}\label{thm:  char two Gerstenhaber bracket}
    When \( \rmChar(K) = 2\),  \(\rmHH^*(A)\)  is isomorphic, as Gerstenhaber algebras,  to the graded commutative ring $K[X]/J$ as in Theorem \ref{thm: char 2 ring},  whose  Lie brackets between the   generators  are given as follows:
    \begin{enumerate}
     \item Let \(m\ge 1\), \(n = 0\). \begin{enumerate} 
            \item For any generator $x$ in \(\rmHH^m(A)\), $[1_K, x]=0$.
            \item $q=1,2$, $[x_{m,q}, x_{0,1}] = mx_{m-1,q}$.
            \item $q=3,4$, $[x_{m,q}, x_{0,1}] = \begin{cases}
                0, & m=1,\\
             (m-1)x_{m-1,q}, & m\ge 2.
            \end{cases}$
            \item $[x_{m,1}, x_{0,2}] = mx_{m-1,2}$, $[x_{m,2}, x_{0,2}] =0$.
            \item $[x_{m,3}, x_{0,2}] = \begin{cases}
                x_{m-1,1}, & m= 1,\\
                (m-1)x_{m-1,4} + x_{m-1,1}, & m\ge 2,
            \end{cases}$ $[x_{m,4}, x_{0,2}] = x_{m-1,2}$.
        \end{enumerate}
     \item Let \(m,n\ge 1\). \begin{enumerate}
            \item $j=1,2$, $[x_{m,1}, x_{n,j}] =
      \begin{cases}
        x_{m+n-1,j}, &  m+n \text{ odd},\\
        0, & m+n \text{ even}.
      \end{cases}$
           \item $j=3,4$,  $[x_{m,1}, x_{n,j}] =
      \begin{cases}
        0, & n \text{ odd},\\
       x_{m+n-1,j}, & n \text{ even}.
      \end{cases}$
          \item $[x_{m,2}, x_{n,2}] =0$, $[x_{m,2}, x_{n,4}] =x_{m+n-1,2}$.
          \item $[x_{m,2}, x_{n,3}] =
      \begin{cases}
        x_{m+n-1,1}, &  n \text{ odd},\\
        x_{m+n-1,4} + x_{m+n-1,1}, & n \text{ even}.
      \end{cases}$
         \item $[x_{m,3}, x_{n,3}] =0$, $[x_{m,4}, x_{n,3}] = x_{m+n-1,3}$,
      $[x_{m,4},x_{n,4}] = 0$.
        \end{enumerate}
    \end{enumerate}
\end{theorem}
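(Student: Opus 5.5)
The proof will be a direct assembly from Proposition \ref{prop: circle product}, read through the isomorphism $\bar\iota$ of Theorem \ref{thm: char 2 ring}. Under $\bar\iota$ the generators correspond to cocycles: $x_{n,1}\leftrightarrow f_{0,3}^n$, $x_{n,2}\leftrightarrow f_{0,6}^n$, $x_{n,3}\leftrightarrow f_{1,1}^n$, $x_{n,4}\leftrightarrow f_{1,4}^n$, and $1_K\leftrightarrow f_{0,1}^0+f_{1,2}^0$. Since $\rmChar(K)=2$, all signs $\varepsilon_{i,n}$, $(-1)^{i-1}$ and $(-1)^{(m-1)(n-1)}$ equal $1$. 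So on the cochain level
\[
[f,g]_{B^{\calM}}=\sum_{i=1}^m f\tilde\circ_i g+\sum_{i=1}^n g\tilde\circ_i f ,
\]
and each $\tilde\circ_i$ term has already been computed. One caveat: S\'anchez-Flores only identifies the reduced bracket with the Gerstenhaber bracket on $\rmHH^{\ge1}(A)$. For brackets involving degree $0$ (cases (1)), I will check that $[-,-]_{B^{\calM}}$ with a $0$-cochain still computes the Gerstenhaber bracket. This holds because for $g\in\rmHH^0(A)$ the class $[f,g]$ is the contraction of $f$ with $g$ at each slot, and the $\pi$ only removes $E$-components. Those components give zero in $\rmHH$, since $[f,e]$ is the inner-derivation action of the central idempotent component, and by Lemma \ref{lem: easy circle product}(1) that term vanishes anyway.

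For case (1), with $n=0$, the second sum is empty because $g\tilde\circ f=0$ when $g$ has degree $0$. The bracket is therefore $\sum_{i=1}^m f\tilde\circ_i g$.
\begin{itemize}
\item Item (1)(a) follows from Proposition \ref{prop: circle product}(1)(a).
\item For (1)(b), items (1)(b)(c) of the proposition give $m$ equal terms $f_{0,q}^{m-1}$, hence $m\,x_{m-1,q}$. For $q=3,4$ the $i=m$ term drops, giving $(m-1)x_{m-1,q}$, which is $0$ when $m=1$.
\item For (1)(d)(e), I use (1)(b)(d) of the proposition. For $f_{1,1}^m\tilde\circ_i f_{0,6}^0$ this gives $(m-1)f_{1,4}^{m-1}+f_{0,3}^{m-1}$. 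Here the $m=1$ case must be read off the explicit $m=1$ computation in the proof, which gives $f_{0,3}^0$.
\end{itemize}

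For case (2), with $m,n\ge1$, both sums contribute, and I read off each pair from Proposition \ref{prop: circle product}(3):
\begin{itemize}
\item $[x_{m,1},x_{n,j}]$ for $j=1,2$: by (3)(a) this is $(m+n)f_{0,j'}^{m+n-1}$ plus the symmetric term. Note that $f_{0,6}\tilde\circ f_{0,3}$ survives while $f_{0,6}\tilde\circ f_{0,6}=0$. This gives the parity condition on $m+n$, and $[x_{m,2},x_{n,2}]=0$.
\item $[x_{m,1},x_{n,j}]$ for $j=3,4$: (3)(b) gives $f_{0,3}\tilde\circ f_{1,j}=0$, and (3)(c) gives $(n-1)f_{1,j}^{m+n-1}$, which is nonzero exactly when $n$ is even.
\item $[x_{m,2},x_{n,3}]$ and $[x_{m,2},x_{n,4}]$: these come from (3)(d), namely $(n-1)f_{1,4}+f_{0,3}$ and $f_{0,6}$ respectively.
\item The brackets among $x_{\cdot,3}$ and $x_{\cdot,4}$ come from (3)(e)(f). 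The single $i=m$ term gives $[x_{m,4},x_{n,3}]=f_{1,1}^{m+n-1}$. For $[x_{m,4},x_{n,4}]$ there are two equal terms, which cancel in characteristic $2$. Finally $[x_{m,3},x_{n,3}]=0$.
\end{itemize}

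The main obstacle is not the arithmetic but justifying that these cochain identities give the stated cohomology classes. This requires the representatives above to be cocycles whose brackets land on the chosen basis representatives; by the char-$2$ computation of $\rmImm$, the ambiguities are only $f_{n+1,5}^n$ and $f_{n+1,7}^n$, and the formulas involve neither. It also requires Proposition \ref{prop: circle product}, whose uncomputed cases are asserted to be "obtained in the same way". I would first verify cases (3)(c)(e) by the same zigzag bookkeeping used in (1)(d), relying on the vanishing identities \eqref{equ: ba,b} and \eqref{equ: b,ba}. Finally, the Gerstenhaber isomorphism statement follows, since $\bar\iota$ is an algebra isomorphism and the bracket is determined on generators by the Poisson rule.
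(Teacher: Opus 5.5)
Your proposal is correct and follows the paper's route. The paper obtains every bracket exactly by summing the $\tilde\circ_i$ values of Proposition~\ref{prop: circle product} (all signs trivial in characteristic $2$) and transporting along $\bar\iota$, and your tallies check out, for example $(m-1)f_{1,4}^{m-1}+f_{0,3}^{m-1}$ for $[x_{m,3},x_{0,2}]$ and $2f_{1,4}^{m+n-1}=0$ for $[x_{m,4},x_{n,4}]$. Your degree-$0$ caveat is more careful than the paper, but the inner-derivation argument is muddled. It is settled more cleanly by noting that $f\mapsto f\circ\pi^{\otimes m}$ embeds $B^*$ quasi-isomorphically into $C^*(A)$ and intertwines $\bar\circ_i$ with $\circ_i$ exactly in every degree, including $n=0$.
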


\begin{theorem}   
\label{thm:  char not two Gerstenhaber bracket}
    When \( \rmChar(K) \neq 2\), \(\rmHH^*(A)\)  is isomorphic, as Gerstenhaber algebras,  to the graded commutative ring $K[X]/J$ as in Theorem \ref{thm: char neq 2 ring},  whose  Lie brackets between the   generators  are given as follows:  
    \begin{enumerate}
        \item Let \(m\ge 1\). For any generator $x$ in \(\rmHH^m(A)\), $[1_K, x]=0$.
        \item Let \(m \text{ odd}\), \(n = 0\). \begin{enumerate}
            \item Put $\lambda_1=1$ and $\lambda_m=\frac14$ for odd
            $m\ge3$. Then, for $q=1,2$, $[x_{m,q},x_{0,1}] =\lambda_m x_{m-1,q}$.
            \item $[x_{m,3},x_{0,1}] =0$. 
            \item $[x_{m,1},x_{0,2}] =\lambda_m x_{m-1,2}$, $[x_{m,2},x_{0,2}] =0$, $[x_{m,3},x_{0,2}] =\lambda_m x_{m-1,2}$. 
        \end{enumerate}
    \item Let \(m\ge 2\), \(m \text{ even}\),  \(n = 0\). \begin{enumerate}
        \item $q=1,2,3$, $j=1,2$, $[x_{m,q},x_{0,j}] =0$.
        \end{enumerate}
        \item Let \(m,n \text{ odd}\). \begin{enumerate}
            \item $j=1,2$, $[x_{m,1}, x_{n,j}] = (m-n)x_{m+n-1,j}$, $[x_{m,1}, x_{n,3}] =(1-n)x_{m+n-1,3}$.
           \item $ [x_{m,2}, x_{n,2}] =0$, $[x_{m,2}, x_{n,3}] =-x_{m+n-1,2}$, $[x_{m,3}, x_{n,3}] =0$.
        \end{enumerate}
        \item Let \(m\ge 2\), \(m \text{ even}\), \(n\text{ odd}\). \begin{enumerate}
            \item $j=1,2$, $[x_{m,1},x_{n,j}] = (m-1)x_{m+n-1,j}$, $[x_{m,1},x_{n,3}] = 0$.
            \item $[x_{m,2},x_{n,1}] = (m-1)x_{m+n-1,2}$, $[x_{m,2},x_{n,2}] = 0$, $[x_{m,2},x_{n,3}] = -x_{m+n-1,2}$.
           \item $[x_{m,3},x_{n,1}] = (m-2)x_{m+n-1,3}$, $[x_{m,3},x_{n,2}] =0$, $[x_{m,3},x_{n,3}] = 2x_{m+n-1,3}$.
        \end{enumerate}
        \item Let \(m,n\ge 2\), \(m,n \text{ even}\). For $q, j=1,2,3$, $[x_{m,q},x_{n,j}] = 0$.
    \end{enumerate}
\end{theorem}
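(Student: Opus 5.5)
The plan is to reduce everything to the cochain-level formulas of Proposition \ref{prop: circle product}, using the isomorphism $\bar\iota$ of Theorem \ref{thm: char neq 2 ring} to translate generators into explicit cocycles. Concretely, for each pair of generators $x_{m,q},x_{n,j}$ we take the representative cocycles $\phi(x_{m,q}),\phi(x_{n,j})$. These are $f_{0,3}^m$, $f_{0,6}^m$, $f_{1,4}^m$ for $m$ odd (or $m=0$), and $2f_{0,3}^m-f_{m+1,5}^m$, $2f_{0,6}^m-f_{m+1,7}^m$, $f_{2,1}^m$ for $m\ge 2$ even. We then compute
\[
[f,g]_{B^{\calM}}=\sum_{i=1}^m f\tilde\circ_i g-(-1)^{(m-1)(n-1)}\sum_{i=1}^n g\tilde\circ_i f
\]
by bilinearity and the listed values of $f_{k,q}^m\tilde\circ_i f_{l,j}^n$. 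Afterwards we reduce the result modulo $\rmIm(\partial^{m+n-2})$, using the explicit image bases from Section~\ref{sec: Hochschild cohomology groups}, and express it in the basis of $\rmHH^{m+n-1}(A)$. Finally we pull back along $\bar\iota$ to a polynomial in $X$. Part (1) is immediate: all $\tilde\circ_i$ with $f_{0,1}^0+f_{1,2}^0$ vanish by Lemma \ref{lem: easy circle product}(1), and the other term is zero since a degree-$0$ cochain has no insertion slots.

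The sums over $i$ are where the coefficients come from. Each $\tilde\circ_i$ term carries the sign $\varepsilon_{i,n}=(-1)^{(i-1)(n-1)}$. For $n$ odd this sign is $1$, so one simply counts slots and gets coefficients like $m$, $m-1$ or $m-2$. For $n$ even (including $n=0$) the signs alternate, so $\sum_{i=1}^{m}(-1)^{i-1}$ equals $1$ or $0$ according to the parity of $m$, and the truncated sums equal $0$ or $\pm1$. For example, in (4)(a) with $m,n$ odd, $f_{0,3}^m\tilde\circ f_{0,3}^n=m f_{0,3}^{m+n-1}$ and $f_{0,3}^n\tilde\circ f_{0,3}^m=n f_{0,3}^{m+n-1}$, which gives $(m-n)x_{m+n-1,1}$. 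In the same way $f_{0,3}^m\tilde\circ f_{1,4}^n=0$ and $f_{1,4}^n\tilde\circ f_{0,3}^m=(n-1)f_{1,4}^{m+n-1}$, which gives $(1-n)x_{m+n-1,3}$.

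When an even-degree generator is involved, the factor $2$ in $2f_{0,3}^m-f_{m+1,5}^m$ and the extra contributions from parts (2), (5), (6) of Proposition \ref{prop: circle product} have to be combined. The result is then rewritten using the relations in $\rmIm(\partial)$, such as $2f_{0,3}^{N}+f_{N+1,5}^{N}\equiv0$ and $f_{l,3}^{N}\equiv f_{l+1,4}^N$ for odd $l$. This is also where the shifts $f_{1,3}$, $f_{2,4}$ get identified with $x_{N,3}$, via $f_{2,1}$ or $f_{1,4}$. Some brackets land on cochains like $f_{m+n-1,4}^{N}$ or $f_{1,3}^N$, which are not the chosen generator representatives. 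For those we use the cup product formulas of Theorem \ref{thm: char neq 2 cup product} to identify the class with the stated multiple of a generator, or to show it is zero in cohomology.

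I expect the main obstacle to be the odd-$m$, $n=0$ case (2)(a), where the coefficient is $\lambda_m=\tfrac14$ for $m\ge3$ rather than $1$. The bracket $[x_{m,q},x_{0,1}]$ has odd degree $m-1$ for $m$ odd, so it lands in even degree $m-1\ge2$. There the generator representative is $2f_{0,3}^{m-1}-f_{m,5}^{m-1}$, whereas the computed cochain is a combination like $\sum_i(-1)^{i-1}f_{0,3}^{m-1}$, which is only cohomologous to a rational multiple of it. To find the constant I will first write the cochain in the basis of $\Ker\partial^{m-1}$, then subtract the image elements $2f_{0,3}^{m-1}+f_{m,5}^{m-1}$, and finally solve for the scalar. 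I will verify the constant against the Poisson rule and graded antisymmetry. The same consistency checks (Poisson rule applied to the relations in $J$) will be used throughout to detect sign errors. The remaining even–even vanishing statements in (3) and (6) should follow directly from the zero entries of Proposition \ref{prop: circle product} together with the alternating sign cancellations.
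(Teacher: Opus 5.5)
Your plan follows the paper's route: the paper obtains the bracket table by feeding the representatives $\phi(x_{m,q})$ into Proposition~\ref{prop: circle product} and reducing modulo the coboundaries of Section~\ref{sec: Hochschild cohomology groups}, and your derivation of $\lambda_m=\tfrac14$ via $[2f_{0,3}^{N}-f_{N+1,5}^{N}]=4[f_{0,3}^{N}]$ is the right mechanism. One correction to your bookkeeping: no $f_{1,3}$ or $f_{2,4}$ term is ever identified with $x_{N,3}$, and no cup-product input is needed. The $f_{1,3}^{N}$ contributions at $i=m-1$ and $i=m$ cancel, and the $f_{2,4}^{N}$ terms either have alternating-sum coefficient $0$ or, for $N$ even, lie in $\rmIm(\partial^{N-1})$; this is exactly why $[x_{m,3},x_{n,2}]=0$ for $m$ even and $n$ odd.
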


\begin{definition} [{\cite[Section 9.2]{Her16}}]
    Let \(A\) be a \(K\)-algebra with unit. Let \(\calN\) be the ideal generated by all homogeneous nilpotent elements of \(\rmHH^*(A)\). 
    \begin{itemize}
        \item[(1)]   The weak Gerstenhaber ideal \(G\) generated by $\calN$ is the minimal ideal of \(\rmHH^*(A)\) with respect to the cup product  containing $\calN$  that is also a Lie subalgebra; equivalently, $[G,G]\subseteq G$.
        \item[(2)]  The Gerstenhaber ideal \(\calG\) generated by $\calN$ is the minimal ideal of \(\rmHH^*(A)\) with respect to the cup product containing $\calN$ that is also a Lie ideal; equivalently, $[\rmHH^*(A), \calG]\subseteq \calG$.
    \end{itemize}
\end{definition}

%------------------------------------------------------------------------------

\begin{theorem} \label{Main Theorem}

Let $A$ be the Xu--Snashall algebra. 
    Then the following statements hold.    
    \begin{itemize}
    \item[(1)] The weak Gerstenhaber ideal $G$ generated by $\calN$ is equal to $\calN$.  
   Hence, \(\rmHH^*(A)/G =\rmHH^*(A)/\mathcal{N}\) is not a finitely generated \(K\)-algebra either. 
   \item[(2)]  We have 
     $\rmHH^*(A)/\calG \cong K$. 
\end{itemize}
   
\end{theorem}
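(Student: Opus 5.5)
The plan is to read everything off the explicit Gerstenhaber structure in Theorems~\ref{thm:  char two Gerstenhaber bracket} and \ref{thm:  char not two Gerstenhaber bracket}, together with the descriptions of $\calN$ in Theorems~\ref{thm: char 2 ring} and \ref{thm: char neq 2 ring}.

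\textbf{Part (1): $\calN$ is closed under the bracket.} In characteristic two, $\calN$ is the cup-product ideal $I_1$ generated by $x_{n,1}, x_{n,2}$ ($n\ge 0$) and $x_{n,4}$ ($n\ge1$). In characteristic $\neq 2$, $\calN$ is the ideal $I_2$ generated by $x_{n,1}, x_{n,2}$ ($n\ge0$) and $x_{n,3}$ ($n$ odd). Since $G\supseteq\calN$ is the smallest ideal containing $\calN$ with $[G,G]\subseteq G$, it suffices to show $[\calN,\calN]\subseteq\calN$.

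First I reduce to ideal generators. Take $x,y$ generators of $\calN$ and $u,v$ arbitrary homogeneous elements. Expanding $[x\cup u,\,y\cup v]$ with the Poisson rule (using graded antisymmetry to move the bracket into either slot) gives a sum of terms of four shapes, up to sign and graded-commutative reordering:
\[
x\cup[u,y]\cup v,\qquad y\cup[x,v]\cup u,\qquad x\cup y\cup[u,v],\qquad [x,y]\cup u\cup v .
\]
The first three lie in $\calN$ because they contain the factor $x$ or $y$. So everything reduces to checking $[x,y]\in\calN$ for pairs of generators, including the degree-$0$ ones, where the theorems list brackets with $x_{0,1}, x_{0,2}$. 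A bracket of two degree-$0$ elements has degree $-1$ and so vanishes.

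Next I scan the bracket tables.

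\emph{Characteristic two.} Brackets among $x_{*,1},x_{*,2}$ give multiples of $x_{*,1},x_{*,2}$. For the others:
\[
[x_{m,1},x_{n,4}]\in Kx_{m+n-1,4},\quad [x_{m,2},x_{n,4}]=x_{m+n-1,2},\quad [x_{m,4},x_{n,4}]=0,
\]
\[
[x_{m,4},x_{0,1}]=(m-1)x_{m-1,4}\ \ (m\ge2,\ \text{and } 0 \text{ for } m=1),\qquad [x_{m,4},x_{0,2}]=x_{m-1,2}.
\]
All of these lie in $\calN$.

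\emph{Characteristic $\neq2$.} With $m,n$ odd, $[x_{m,1},x_{n,3}]=(1-n)x_{m+n-1,3}$. This lies in $\calN$ because $m+n-1$ is odd, so $x_{m+n-1,3}$ is one of the odd-index generators. The remaining brackets, namely $[x_{m,2},x_{n,3}]$, $[x_{m,3},x_{n,3}]$, $[x_{m,3},x_{0,j}]$ and the even/odd mixed cases with $x_{*,1},x_{*,2}$, all land in $\operatorname{Span}\{x_{*,1},x_{*,2}\}$ or are $0$. Hence $G=\calN$, and non-finite generation follows from the corollaries to Theorems~\ref{thm: char 2 ring} and \ref{thm: char neq 2 ring}.

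\textbf{Part (2): $\calG$ is the augmentation ideal.} Now I use Lie-ideal closure against arbitrary elements to capture the non-nilpotent generators.
\begin{itemize}
\item In characteristic two, $x_{1,4}\in\calN$ and $[x_{1,4},x_{n,3}]=x_{n,3}$, so every $x_{n,3}$ lies in $\calG$.
\item In characteristic $\neq2$, for even $m\ge2$ we have $[x_{m,3},x_{1,3}]=2x_{m,3}$ with $x_{1,3}\in\calN$. Since $2$ is invertible, every even-index $x_{m,3}$ lies in $\calG$.
\end{itemize}
In both cases $\calG$ then contains every generator in $X$, so it contains the ideal $\mathfrak m$ spanned by all basis monomials other than $1_K$.

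Conversely, $\mathfrak m$ is a cup-product ideal, and it is a Lie ideal: every bracket in the tables between elements of $X$ is a combination of elements of $X$, $1_K$ brackets to zero, and the Poisson rule propagates this to products. Since $1_K\notin\mathfrak m$, we get $\calG=\mathfrak m$, and therefore $\rmHH^*(A)/\calG\cong K$.

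The main care point is the bookkeeping in part (1). One has to verify that no bracket of two nilpotent generators produces a non-nilpotent generator: $x_{*,3}$ in characteristic two, or an even-index $x_{*,3}$ in characteristic $\neq2$. This is a parity check against the tables, chiefly the case $m,n$ odd in characteristic $\neq 2$.
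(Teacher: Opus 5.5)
Your proposal is correct and follows the paper's proof step for step. Part (1) checks $[\calN,\calN]\subseteq\calN$ from the bracket tables plus the Poisson rule; you spell out the reduction to generators and the parity point $[x_{m,1},x_{n,3}]=(1-n)x_{m+n-1,3}$ with $m+n-1$ odd, which the paper leaves implicit. Part (2) pulls the remaining $x_{*,3}$ into $\calG$ via $[x_{1,4},x_{n,3}]=x_{n,3}$ (the paper uses the equivalent $[x_{m,4},x_{1,3}]=x_{m,3}$), resp. $[x_{m,3},x_{1,3}]=2x_{m,3}$ with $2$ invertible, and then shows the augmentation ideal is a Gerstenhaber ideal, exactly as the paper does with $V_1$ and $V_2$.
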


\begin{proof}
(1) The bracket tables in Theorems
\ref{thm:  char two Gerstenhaber bracket} and
\ref{thm:  char not two Gerstenhaber bracket}, together with the Poisson
rule, show that \([\calN,\calN]\subseteq\calN\). Thus, $\calN$ is a weak Gerstenhaber ideal. Since $G$ is the minimal ideal containing $\calN$, we have $G=\calN$.

(2)  When $\rmChar(K)=2$, let $V_1=\Span_K(S_1)$, where 
\[\begin{aligned}
   S_1= &\{x_{n,1}, x_{n,2}\mid n \ge 0\}\cup \{x_{n,3}, x_{n,4}\mid n \ge 1\}\cup\\
&\quad\quad\quad\{x_{0,1}(x_{1,3})^l x_{n,3}, x_{0,1}(x_{1,3})^l x_{n,4}, (x_{1,3})^{l+1} x_{n,3},  (x_{1,3})^{l+1} x_{n,4} \mid n \ge 1, l \ge 0\}.
\end{aligned}
\]
Since $\{1_K\}\cup S_1$ is a $K$-basis of $K[X]/J$ and the product of any two elements of $S_1$ has zero constant term, $V_1$ is an ideal.  
Since $ x_{m,4} \in\calN\subseteq \calG$ and $[x_{m,4}, x_{1,3}] = x_{m,3}$ for $m\ge 1$, we obtain $x_{m,3} \in \calG$ for $m\ge 1$. Thus $(x_{1,3})^l x_{n,3}\in \calG$ for $n\ge 1$, $l\ge 0$; together with $\calN \subseteq \calG$, this implies $V_1\subseteq \calG$. 
Conversely,  the bracket table and the Poisson rule give $[K[X]/J,V_1]\subseteq V_1$. Thus $V_1$ is a Gerstenhaber ideal containing $\calN$, so $\calG \subseteq V_1 $. Therefore, $\calG = V_1 $ and  $\rmHH^*(A)/\calG \cong K$.

    When $\rmChar(K)\neq 2$, let $V_2=\Span_K(S_2)$, where
\[\begin{aligned}
    S_2=&\{x_{n,1}, x_{n,2}\mid n \ge 0\}\cup \{x_{n,3}\mid n \ge 1\}
\cup\{x_{1,3}(x_{2,3})^lx_{0,1}, (x_{2,3})^{l+1}x_{0,1}\mid l\ge 0\}\\
&\cup \{x_{1,3}(x_{2,3})^l x_{n,1},  (x_{2,3})^{l+1} x_{n,1} \mid n \ge 1, n\text{ odd}, l \ge 0\} \cup \{x_{1,3}(x_{2,3})^l x_{n,3},  (x_{2,3})^{l+1} x_{n,3} \mid n \ge 2, n\text{ even}, l \ge 0\}.
\end{aligned}
\]
Since $\{1_K\}\cup S_2$ is a $K$-basis of $K[X]/J$ and the product of any two elements of $S_2$ has zero constant term, $V_2$ is an ideal of $K[X]/J$.
Since $x_{1,3}\in\calN \subseteq \calG$ and $[x_{m,3}, x_{1,3}]= 2x_{m,3}$ for $m\ge 2$, $m \text{ even}$, we obtain $x_{m,3}\in \calG$. Thus $(x_{2,3})^l x_{n,3}\in \calG$ for $n \ge 2$, $n\text{ even}$, $l \ge 0$; together with $\calN \subseteq \calG$, this implies $V_2\subseteq \calG$. 
Conversely,  the bracket table and the Poisson rule give $[K[X]/J,V_2]\subseteq V_2$. Thus $V_2$ is a Gerstenhaber ideal containing $\calN$. Therefore, $\calG=V_2$ and $\rmHH^*(A)/\calG \cong K$.
\end{proof}

\bigskip

\textbf{Acknowledgements}  

The authors were supported by the National Key R$\&$D Program of China (No. 2024YFA1013803) and by the Shanghai Key Laboratory of PMMP (No. 22DZ2229014).

Computations in this paper have been done by hand and the authors take full responsibility of the correctness of the results in this paper.
 AI was used for checking the computations and for revising the paper. 

 This paper is based on the master theses \cite{Shi26}\cite{Long27} of the first two authors under the supervision of the third author. Both the first two authors want to express their sincere gratitude to the third author for his pertinent suggestions and constant encouragements.  

% We are grateful to

\textbf{Conflict of Interest}

None of the authors has any conflict of interest in the conceptualization or publication of this
work.

\textbf{Data availability}

Data sharing is not applicable to this article as no new data were created or analyzed in this study.

\bigskip

\bibliographystyle{amsplain}

%------------------------------------------------------------------------------
% End of journal.tex
%------------------------------------------------------------------------------

\end{document}